\documentclass[12pt]{article}

\usepackage[T1]{fontenc}
\usepackage{lmodern}
\usepackage{textcomp}

\usepackage{amsmath}%
\usepackage{amssymb}%
\usepackage[table]{xcolor}%

\usepackage{todonotes}
\usepackage[in]{fullpage}%

\usepackage[amsmath,thmmarks]{ntheorem}%

\theoremseparator{.}%

\usepackage{titlesec}%
\titlelabel{\thetitle. }%
\usepackage{xcolor}%
\usepackage{mleftright}%
\usepackage{xspace}%
\usepackage{graphicx}
\usepackage{hyperref}%
\usepackage{mathtools}

\usepackage{hyperref}%
\hypersetup{%
      unicode,
      breaklinks,%
      colorlinks=true,%
      urlcolor=[rgb]{0.25,0.0,0.0},%
      linkcolor=[rgb]{0.5,0.0,0.0},%
      citecolor=[rgb]{0,0.2,0.445},%
      filecolor=[rgb]{0,0,0.4},
      anchorcolor=[rgb]={0.0,0.1,0.2}%
}
\usepackage[ocgcolorlinks]{ocgx2}

\usepackage{lmodern}

\theoremseparator{.}%

\theoremstyle{plain}%
\newtheorem{theorem}{Theorem}[section]

\newtheorem{lemma}[theorem]{Lemma}
\newtheorem{conjecture}[theorem]{Conjecture}

\newtheorem{fact}[theorem]{Fact}

\newtheorem{proposition}[theorem]{Proposition}
\newtheorem{prop}[theorem]{Proposition}

\newtheorem{remark}[theorem]{Remark}

\newtheorem*{remark:unnumbered}[theorem]{Remark}%
\newtheorem*{remarks}[theorem]{Remarks}%

\newtheorem{definition}[theorem]{Definition}

\newtheorem{assumption}[theorem]{Assumption}%

\newcommand{\myqedsymbol}{\rule{2mm}{2mm}}

\theoremheaderfont{\em}%
\theorembodyfont{\upshape}%
\theoremstyle{nonumberplain}%
\theoremseparator{}%
\theoremsymbol{\myqedsymbol}%
\newtheorem{proof}{Proof:}

\newenvironment{proofof}[1]
{\par\noindent\textit{Proof of #1:}}
{\hfill$\myqedsymbol$\par}

\providecommand{\emphind}[1]{}%
\renewcommand{\emphind}[1]{\emph{#1}\index{#1}}

\definecolor{blue25emph}{rgb}{0, 0, 11}

\providecommand{\emphic}[2]{}
\renewcommand{\emphic}[2]{\textcolor{blue25emph}{%
      \textbf{\emph{#1}}}\index{#2}}

\providecommand{\emphi}[1]{}%
\renewcommand{\emphi}[1]{\emphic{#1}{#1}}

\definecolor{almostblack}{rgb}{0, 0, 0.3}

\providecommand{\emphw}[1]{}%
\renewcommand{\emphw}[1]{{\textcolor{almostblack}{\emph{#1}}}}%

\providecommand{\emphOnly}[1]{}%
\renewcommand{\emphOnly}[1]{\emph{\textcolor{blue25}{\textbf{#1}}}}

\newcommand{\HLink}[2]{\hyperref[#2]{#1~\ref*{#2}}}
\newcommand{\HLinkSuffix}[3]{\hyperref[#2]{#1\ref*{#2}{#3}}}

\providecommand{\deflab}[1]{}
\renewcommand{\deflab}[1]{\label{def:#1}}

\providecommand{\eqlab}[1]{}%
\renewcommand{\eqlab}[1]{\label{equation:#1}}

\newcommand{\remove}[1]{}%

\newcommand{\R}{\mathbb{R}}
\newcommand{\dd}{\mathrm{d}}

\usepackage[inline]{enumitem}

\newlist{compactenumA}{enumerate}{5}%
\setlist[compactenumA]{topsep=0pt,itemsep=-1ex,partopsep=1ex,parsep=1ex,%
   label=(\Alph*)}%

\newlist{compactenuma}{enumerate}{5}%
\setlist[compactenuma]{topsep=0pt,itemsep=-1ex,partopsep=1ex,parsep=1ex,%
   label=(\alph*)}%

\newlist{compactenumI}{enumerate}{5}%
\setlist[compactenumI]{topsep=0pt,itemsep=-1ex,partopsep=1ex,parsep=1ex,%
   label=(\Roman*)}%

\newlist{compactenumi}{enumerate}{5}%
\setlist[compactenumi]{topsep=0pt,itemsep=-1ex,partopsep=1ex,parsep=1ex,%
   label=(\roman*)}%

\newlist{compactitem}{itemize}{5}%
\setlist[compactitem]{topsep=0pt,itemsep=-1ex,partopsep=1ex,parsep=1ex,%
   label=\ensuremath{\bullet}}%

\numberwithin{equation}{section}%

\newcommand{\di}{\displaystyle\int}

\newcommand{\oc}{\Omega^c}
\newcommand{\om}{\Omega}
\newcommand{\dic}{\displaystyle\int_{\oc}}
\newcommand{\po}{\partial\Omega}

\begin{document}

\title{Low-Temperature Asymptotics of the Poincaré and the log-Sobolev Constants for Łojasiewicz Potentials}

\author{%
   Aziz Ben Nejma\thanks{%
   \parbox[t]{0.95\linewidth}{%
Université Paris Cité and Sorbonne Université, CNRS, Laboratoire de Probabilités, Statistique et Modélisation, F-75013 Paris, France and DMA, École normale supérieure, Université PSL, CNRS, 75005 Paris, France.%
}}
}

\maketitle

\setcounter{secnumdepth}{3}
\begin{abstract}
    In this paper, we establish the low-temperature asymptotics of the Poincaré inequality constant for a class of convex potentials satisfying a Łojasiewicz inequality. In addition, we disprove a conjecture previously posed by Chewi and Stromme on the low-temperature asymptotics of the log-Sobolev constant and determine the correct asymptotic behavior in dimension one.
\end{abstract}

\section{Introduction}
Let $\mu = \frac{1}{Z}e^{-V}$ be a probability measure defined on $\mathbb{R}^n$. The measure $\mu$ satisfies a Logarithmic Sobolev (or log-Sobolev) Inequality (LSI)  with constant $C$ if the inequality
\begin{equation}\label{lsidef}
\operatorname{Ent}_\mu(h^2) \coloneqq \di h^2\log(h^2)\,\dd \mu - \bigg(\di h^2\,\dd \mu \bigg)\log\bigg(\di h^2\,\dd \mu\bigg) \leq 2C\di \lVert \nabla h \rVert^2\,\dd \mu
\end{equation}
holds whenever $h$ is a compactly supported smooth function.

\noindent The measure $\mu$ satisfies a Poincaré inequality with constant $C$ if the similar inequality
$$\text{Var}_\mu(h) \coloneqq\di h^2\,\dd \mu - \Big(\di h\,\dd \mu \Big)^2 \leq C\di \lVert \nabla h \rVert ^2\,\dd \mu.$$
holds for all compactly supported smooth functions $h$.
We will denote by $C_\mathrm{LS}(\mu)$ and $C_{\mathrm{P}}(\mu)$ the best constants in the two previous inequalities, satisfying $C_{\mathrm{P}}(\mu) \leq C_\mathrm{LS}(\mu)$.
Among other applications, these two constants determine the exponential rates of convergence to equilibrium of the overdamped Langevin diffusion in relative entropy and in $L^2$ distance. Define $L = \Delta - \nabla V \cdot \nabla$ and let $P_t = e^{tL}$ be the semigroup corresponding to the Markov process $$\,\dd X_s = -\nabla V(X_s)\,\dd s+\sqrt{2}\,\mathrm{d}B_s$$
then it holds that \cite{bakry2014analysis}
$$\lVert P_tf - \bar f \rVert_{L^2(\mu)}^2 \leq e^{-\frac{t}{C_{\mathrm{P}}(\mu)}}\lVert f - \bar f \rVert_{L^2(\mu)}$$ and that
$$\operatorname{Ent}_\mu(P_tf) \leq e^{-\frac{2t}{C_{\mathrm{LS}}(\mu)}}\operatorname{Ent}_\mu (f)$$
for all functions $f \in L^2(\mu)$, where $\bar f = \di f\,\dd \mu$.

Interest in the asymptotic behavior of these constants in the low-temperature regime dates back to the Eyring-Kramers formula and to the Arrhenius law \cite{eyring1935activated,kramers1940brownian,bovier2004metastability}~: this regime is defined by considering measures $\mu_t$ of the form $\frac{1}{Z_t}e^{\frac{-V}{t}}$ where $V$ is a given potential and where we let $t \to 0$. The low-temperature asymptotics of the Poincaré and the Logarithmic Sobolev inequalities usually depend on the optimization landscape of the potential $V$, as $\mu_t$ concentrates near the minimizers of the potential $V$ when $t \to 0$.

In the setting of simulated annealing, the asymptotics of $C_{\mathrm{P}}(\mu_t)$ and $C_{\mathrm{LS}}(\mu_t)$ have also been extensively investigated, as these constants control the mixing times and capture the metastable behavior of the overdamped Langevin dynamics, hence dictating the correct cooling rates required for convergence (see, e.g., \cite{HOLLEY1989333, Miclo1992}).

While Eyring-Kramers-type formulas describe the blow-up of these constants, for instance when the potential $V$ has several local minima, or when the set of minimizers is disconnected, the case where these constants tend to $0$ has also been investigated. The simplest case in which this occurs is when the potential is strongly convex. Indeed, the Bakry-Émery criterion (Proposition \ref{be}) implies that $C_{\mathrm{LS}}(\mu_t)=O(t)$ and that $C_{\mathrm{P}}(\mu_t)=O(t)$ whenever $V$ is strongly convex. The exact asymptotics of $C_{\mathrm{P}}(\mu_t)$ and $C_{\mathrm{LS}}(\mu_t)$ have been established in the recent work \cite{chewi2024ballisticlimitlogsobolevconstant} for a class of functions including strongly convex potentials, more precisely,
\begin{theorem}[Chewi-Stromme]\label{CS}
    Let $V \in C^2(\mathbb{R}^n)$ be a potential having a unique minimizer $x_0$ and satisfying a Polyak-Łojasiewicz inequality, namely $$V-\inf V \leq C_{PL} \lVert\nabla V \rVert^2$$
    where $C_{PL}$ is the best constant in the last inequality. Assume $\dfrac{\Delta V}{1+\lVert \nabla V\rVert^2}$ is bounded above, then
    $$\dfrac{C_{\mathrm{LS}}(\mu_t)}{t}\xrightarrow[]{t \to 0^+} 2C_{PL} $$
    and
    $$\dfrac{C_{\mathrm{P}}(\mu_t)}{t} \xrightarrow[]{t \to 0^+} \dfrac{1}{\lVert\nabla ^2V(x_0) \rVert_{op}}.$$
\end{theorem}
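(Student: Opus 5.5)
We treat the Poincaré and log-Sobolev statements separately, proving matching lower and upper bounds in each case. Write $\mu_t \propto e^{-V/t}$ and set $V(x_0)=0$. By the Polyak--Łojasiewicz inequality the Hessian $H=\nabla^2V(x_0)$ is positive definite, with $\lambda_{\min}(H)\ge 1/(2C_{PL})$.

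\emph{Lower bounds.} For the Poincaré constant, we test with the linear function $h(x)=\langle e,x-x_0\rangle$, where $e$ is a unit eigenvector of $H$ for $\lambda_{\min}(H)$. Laplace's method shows that the rescaled variable $(X-x_0)/\sqrt t$ under $\mu_t$ converges to $N(0,H^{-1})$, so $\operatorname{Var}_{\mu_t}(h)\sim t\,\langle e,H^{-1}e\rangle$ while $\int\lVert\nabla h\rVert^2\,\dd\mu_t=1$. This gives $\liminf C_{\mathrm P}(\mu_t)/t\ge \lVert H^{-1}\rVert_{op}$, i.e.\ the reciprocal of the smallest eigenvalue; this is the intended reading of the stated limit. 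Uniform integrability, needed to pass to the limit in Laplace's method, comes from the growth of $V$ implied by PL: $V$ grows at least quadratically away from $x_0$.

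For the log-Sobolev constant, we test with $h=e^{g/(2t)}$ for a suitable $g$, for instance $g=\theta V$ localized near a point $y$ nearly achieving the PL ratio; such a $y$ may lie far from $x_0$. A large-deviation (Laplace) evaluation of $\operatorname{Ent}_{\mu_t}(h^2)$ and of $\int\lVert\nabla h\rVert^2\,\dd\mu_t$, with the ratio optimized over $\theta$, should give $\liminf C_{\mathrm{LS}}(\mu_t)/t\ge 2C_{PL}$. This is the classical Hamilton--Jacobi/Hopf--Lax picture: the LSI constant at low temperature is governed by the worst ratio $(V-\inf V)/\lVert\nabla V\rVert^2$.

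\emph{Upper bounds.} For the log-Sobolev constant we use the Lyapunov-function approach of Cattiaux--Guillin--Wu. Take $W=e^{V/(2t)}$ and compute
$$\frac{LW}{W}=\frac{\Delta V}{2t}-\frac{\lVert\nabla V\rVert^2}{4t^2},$$
where $L$ is now the generator for $\mu_t$. By the assumption on $\Delta V/(1+\lVert\nabla V\rVert^2)$, this is at most $-(1-o(1))\lVert\nabla V\rVert^2/(4t^2)$ outside a $\sqrt t$-scale neighborhood of $x_0$. By PL this is in turn at most $-(1-o(1))V/(4C_{PL}t^2)$. This gives an LSI with constant $2C_{PL}t(1+o(1))$ outside the neighborhood. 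Inside the neighborhood, $V$ is nearly quadratic and we glue in a local Bakry--Émery estimate (Proposition~\ref{be}). For the Poincaré constant the same scheme applies, but with the sharper local constant $1/\lambda_{\min}(H)$ in the neighborhood. Outside the neighborhood, the Lyapunov bound yields a constant $O(t)$ multiplied by a weight that vanishes as $t\to0$, because the mass there is $o(1)$ at the relevant scale.

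\textbf{Main obstacle.} The hardest step is the gluing: making the constants in the Lyapunov/local decomposition sharp to first order. Standard gluing loses multiplicative constants. To avoid this, I would first try a decomposition of variance (or entropy) by conditioning on the ball $B(x_0,R\sqrt t)$ with $R\to\infty$ slowly. The aim is to show that the cross terms are $o(t)$, using exponential tails from the Lyapunov function.
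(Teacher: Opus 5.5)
\paragraph{Context and what works.} The paper gives no proof of this statement; it quotes it from Chewi--Stromme, so your proposal has to stand on its own. You are right that the Poincar\'e limit must be $1/\lambda_{\min}(\nabla^2V(x_0))$; the printed $1/\lVert\nabla^2V(x_0)\rVert_{op}$ is a slip.

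Your two lower bounds and your Poincar\'e upper bound are sound in outline.
\begin{itemize}
\item For the log-Sobolev lower bound, $\theta=1$ already suffices. Then $h=\chi e^{V/(2t)}$ makes $h^2\mu_t\propto\chi^2\,\dd x$ a fixed measure, and one gets $C_{\mathrm{LS}}(\mu_t)\ge(1+o(1))\,2t\,\frac{\int V\chi^2\,\dd x}{\int\lVert\nabla V\rVert^2\chi^2\,\dd x}$. No optimization over $\theta$ is needed.
\item In the Poincar\'e upper bound, the exterior is negligible not because its mass is small but because $-LW/W\gtrsim R^2/t$ off $B=B(x_0,R\sqrt t)$. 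The Bakry--Barthe--Cattiaux--Guillin argument, with $c$ the mean of $f$ on $B$ and Bakry--\'Emery on the convex ball, then costs only $O(t/R^2)$ and a factor $1+O(R^{-2})$.
\end{itemize}

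\paragraph{The gap: the log-Sobolev upper bound.} The missing piece is $\limsup_{t\to0}C_{\mathrm{LS}}(\mu_t)/t\le 2C_{PL}$. This is the heart of the theorem, and you leave it as a plan.

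A Lyapunov condition only gives the weighted $L^2$ bound $\int(-LW/W)f^2\,\dd\mu_t\le\int\lVert\nabla f\rVert^2\,\dd\mu_t$, not an entropy bound. An extra mechanism is needed. For instance, with $\int f^2\,\dd\mu_t=1$ and $\tilde f=fe^{-V/(2t)}Z_t^{-1/2}$, one has the identity $\operatorname{Ent}_{\mu_t}(f^2)=\frac1t\int Vf^2\,\dd\mu_t+\log Z_t+\int\tilde f^2\log\tilde f^2\,\dd x$.
\begin{itemize}
\item PL plus the Lyapunov bound control the first term by $4C_{PL}t(1+o(1))\int\lVert\nabla f\rVert^2\,\dd\mu_t$.
\item For $f$ supported off $B$, the Euclidean LSI bounds the rest by $O\big(1+\log(t\int\lVert\nabla f\rVert^2\,\dd\mu_t)\big)$. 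This is negligible because $t\int\lVert\nabla f\rVert^2\,\dd\mu_t\gtrsim R^2$ there.
\end{itemize}
None of this controls functions living at scale $\sqrt t$ near $x_0$, so everything rests on the gluing.

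Your suggested conditioning on $B$ fails for entropy. It requires an entropy bound under $\mu_t(\cdot\mid B^c)$. That measure concentrates near the sphere $\{\lVert x-x_0\rVert=R\sqrt t\}$, or on two disjoint half-lines if $n=1$. Its log-Sobolev constant is at least of order $R^2t$ (infinite if $n=1$), not $O(t)$.

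\paragraph{Why the natural repair still loses.} Take a smooth partition $\chi_1^2+\chi_2^2=1$ at scale $R\sqrt t$. Entropy is subadditive, the IMS identity produces an error $4C_{PL}t\int f^2(\lVert\nabla\chi_1\rVert^2+\lVert\nabla\chi_2\rVert^2)\,\dd\mu_t=O(R^{-2})\int f^2\,\dd\mu_t$, and Bakry--\'Emery gives the constant $t/\lambda_{\min}(H)\le 2C_{PL}t$ on the ball. This yields only a defective LSI with $A=2C_{PL}t(1+o(1))$ and $B=o(1)$.

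Tightening it by Rothaus gives $C_{\mathrm{LS}}(\mu_t)\le A+\frac{B+2}{2}C_{\mathrm P}(\mu_t)\approx\big(2C_{PL}+1/\lambda_{\min}(H)\big)t$. The $+2$ comes from $\operatorname{Ent}_\mu(f^2)\le\operatorname{Ent}_\mu(\bar f^2)+2\operatorname{Var}_\mu(f)$ with $\bar f=f-\int f\,\dd\mu$, and it cannot be dropped. Lemma~\ref{Rothaus} as printed omits it, and without it the lemma already fails for the uniform measure on a segment. So this route loses a full $\Theta(t)$.

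What is missing is a separate treatment of nearly constant functions that avoids adding $C_{\mathrm P}(\mu_t)$ on top of $A$. For such functions $\operatorname{Ent}_{\mu_t}(f^2)\approx 2\operatorname{Var}_{\mu_t}(f)$, and the relevant input is $C_{\mathrm P}(\mu_t)\le 2C_{PL}t(1+o(1))$, which follows from $\lambda_{\min}(H)\ge 1/(2C_{PL})$. One option is a near-extremizer/linearization analysis in the spirit of Section~\ref{LSIasympt}. As written, the proposal supplies neither this step nor the exterior entropy estimate it asserts.
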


Under the assumptions of Theorem \ref{CS}, the measures $\mu_t$ converge weakly to $\mu_0 \coloneqq \delta_{x_0}$ as $t \to 0$. Therefore, an equivalent formulation of Theorem \ref{CS} is that 
$$\dfrac{C_{\mathrm{LS}}(\mu_t)-C_{\mathrm{LS}}(\mu_0)}{t}\xrightarrow[]{t \to 0^+} 2C_{PL}.$$
This formulation can be easily generalized for potentials not having a unique minimizer, i.e. when the measures $\mu_t$ do not converge to a Dirac mass but to another measure. That is how the authors of \cite{chewi2024ballisticlimitlogsobolevconstant} formulated the following conjecture.
\begin{conjecture}[Chewi-Stromme]\label{conj}
    Let $V$ be a potential satisfying a Polyak-Łojasiewicz inequality such that $\mu_t \to \mu_0$ where $\mu_0$ is a probability measure. Then
    $$\dfrac{C_{\mathrm{LS}}(\mu_t)-C_{\mathrm{LS}}(\mu_0)}{t}\xrightarrow[]{t \to 0^+} 2C_{PL}.$$
\end{conjecture}

\paragraph*{Main results.}
In this paper we disprove Conjecture \ref{conj} (in Subsection \ref{refutation}). Besides, we establish the exact asymptotics of the low-temperature Poincaré constants for convex potentials having a whole domain of minimizers, and that satisfy a Łojasiewicz-type inequality.
A key example is given by potentials of the form $x\mapsto \mathrm{dist}(x,\Omega)^2$ where $\Omega$ is a convex bounded domain.
Moreover, we determine the correct asymptotic behavior of the log-Sobolev constant for general Polyak-Łojasiewicz potentials in dimension $1$.

Before stating the first main theorem, we set up the framework (see the discussion of the assumptions in Section \ref{prelim}.

\begin{assumption}\label{assumptions}
We make the following assumptions :

\begin{itemize}
    \item $V \in C^{1}_{loc}(\mathbb{R}^n)$ is a convex nonnegative potential
    \item $\min V=0$ without loss of generality
    \item $\mathrm{argmin} V =\bar\Omega$ where $\om$ is a bounded smooth domain
    \item $V(x) \leq C\mathrm{dist}(x,\om)^\beta$ on a neighborhood of $\om$, for some $C,\beta >0$.
    \item There exists a positive constant $\alpha < 2\beta$ and a function $a \in L^\infty(\partial\om)\backslash\{0\}$ such that $$V(x)=\dfrac{\mathrm{dist}(x,\om)^\alpha}{a(p(x))+o(1)}\quad \text{as }\mathrm{dist}(x,\om) \to 0^+$$ where $p$ is the orthogonal projection onto the convex set $\om$.
\end{itemize}
\end{assumption}
\begin{remark}
\normalfont
Because of convexity, we necessarily have $\alpha\geq 1$ and $a \geq 0$.
Note that Assumption \ref{assumptions} allows $a(p(x))$ to vanish, as long as $a$ is not identically zero. Moreover, if $\alpha<2$, since $V$ is $C^1$, the inequality $V(x) \leq C \mathrm{dist}(x,\om)^\beta$ is automatically satisfied near $\om$ for $\beta=1$.
\end{remark}
\begin{remark}
\normalfont
    The final part of Assumption \ref{assumptions} is a variant of a Łojasiewicz inequality : we do not only require $V \gtrsim \mathrm{dist}(\cdot,\om)^\alpha$, but our arguments need $V$ to grow exactly like a multiple of $\mathrm{dist}(\cdot,\om)^\alpha$ in the directions pointing outwards from $\om$ for which this last quantity is the correct lower bound.
\end{remark}
\begin{remark}
    \normalfont
    Assumption \ref{assumptions} is satisfied in particular when $V$ is convex such that $V(x)\sim \mathrm{dist}(x,\om)^\alpha$ near $\om$, which is the case for all functions of the form $\mathrm{dist}(\cdot,\om)^\alpha$ for $\alpha > 1$.
\end{remark}

Our first main theorem is the following :
\begin{theorem}\label{thm1}
    Let $V$ be  a potential satisfying Assumption \ref{assumptions}, then it holds that
    $$\dfrac{C_{\mathrm{P}}(\mu_t)-C_{\mathrm{P}}(\mu_0)}{t^\frac{1}{\alpha}} \xrightarrow[t\to 0]{} \Lambda_{\om,V} \in \mathbb{R}$$
    where $\Lambda_{\om,V}$ is an explicit constant (see Theorem \ref{mainthm}).
\end{theorem}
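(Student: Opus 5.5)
We aim to show that $C_{\mathrm{P}}(\mu_t)$ is, to first order, the Poincaré constant of the uniform measure $\mu_0$ on $\Omega$ (the Neumann problem on $\Omega$), perturbed by a thin boundary layer of width $t^{1/\alpha}$. Write $\lambda_0 = 1/C_{\mathrm{P}}(\mu_0)$ for the first nonzero Neumann eigenvalue of $-\Delta$ on $\Omega$, and $u_0$ for an associated eigenfunction.

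\textbf{Step 1: mass outside $\Omega$.} The mass $\mu_t$ gives to $\Omega^c$ is of order $t^{1/\alpha}$. Set $s=\mathrm{dist}(x,\Omega)$ and integrate in normal coordinates $x = p + s\nu(p)$, with $p\in\partial\Omega$ (the Jacobian is $1+O(s)$ since $\Omega$ is smooth). Using the final item of Assumption \ref{assumptions}, we get
$$\int_{\Omega^c} e^{-V/t}\,\dd x = t^{1/\alpha}\,\Gamma\!\left(1+\tfrac1\alpha\right)\int_{\partial\Omega} a(p)^{1/\alpha}\,\dd\sigma(p)\,(1+o(1)).$$
Convexity gives $V\gtrsim s$ far away, so the contribution of the far region is negligible. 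Consequently $Z_t = |\Omega| + c_1 t^{1/\alpha} + o(t^{1/\alpha})$, and $\mu_t$ is $\mu_0$ plus a boundary measure with weight density $\propto a^{1/\alpha}\,\dd\sigma$, all at scale $t^{1/\alpha}$.

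\textbf{Step 2: upper bound via a test function.} Extend $u_0$ outside $\Omega$ as constant along normals, $h(x)=u_0(p(x))$, cut off far away. Then $\nabla h$ outside $\Omega$ is $O(s)$ times tangential derivatives, so $\int_{\Omega^c}|\nabla h|^2\,\dd\mu_t = o(t^{1/\alpha})$. The variance, however, gains the boundary term
$$t^{1/\alpha}\,\Gamma(1+1/\alpha)\int_{\partial\Omega} a^{1/\alpha}\,(u_0-\bar u)^2\,\dd\sigma .$$
Computing the Rayleigh quotient, and optimizing over the eigenspace of $\lambda_0$ if it is degenerate, yields
$$C_{\mathrm{P}}(\mu_t) \ge C_{\mathrm{P}}(\mu_0) + t^{1/\alpha}\Lambda + o(t^{1/\alpha}),$$
with $\Lambda$ given by a first-order perturbation formula of the form
$$\Lambda = \frac{\Gamma(1+1/\alpha)}{\lambda_0|\Omega|}\max_{u}\frac{\int_{\partial\Omega} a^{1/\alpha}u^2\,\dd\sigma - \lambda_0^{-1}\cdots}{\int_\Omega u^2}.$$
Normalization effects from $Z_t$ also enter here. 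I would track the exact formula, which reduces to the quantity in Theorem \ref{mainthm}.

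\textbf{Step 3: matching lower bound on the spectral gap.} Take near-optimizers $h_t$ with $\mathrm{Var}_{\mu_t}(h_t)=1$ and Dirichlet energy at most $\lambda_0+O(t^{1/\alpha})$. The plan has three parts:
\begin{compactenumi}
\item Use the Dirichlet energy bound on $\Omega$ and compactness in $H^1(\Omega)$ to show that $h_t|_\Omega$ converges to a first Neumann eigenfunction.
\item Control $h_t$ on the layer. On each normal ray, a one-dimensional Hardy/Poincaré inequality for the density $e^{-s^\alpha/(a t)}$ on $[0,\infty)$ has constant $O(t^{2/\alpha})$, which is a high-order term. Hence $h_t(p+s\nu)-h_t(p)$ contributes only $o(t^{1/\alpha})$ to the variance, up to an energy cost that is controlled.
\item Since the energy outside $\Omega$ is nonnegative, it can simply be dropped.
\end{compactenumi}
Together these give $\mathrm{Var}_{\mu_t}(h_t) \le \mathrm{Var}_\Omega$ plus the boundary term with trace $h_t|_{\partial\Omega}$, up to $o(t^{1/\alpha})$. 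Combining this with the Neumann problem perturbed by a boundary weight, for example via a min-max argument on the quotient in which the trace term is a compact perturbation in $H^1(\Omega)$, gives the reverse inequality.

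\textbf{Main obstacle.} I expect the hardest step to be Step 3(ii): controlling the layer uniformly when $a$ may vanish on parts of $\partial\Omega$, and for the region far away. Where $a=0$, the layer is thinner than $t^{1/\alpha}$, so it contributes less, which helps. Convexity, together with $V\le C s^\beta$ and $\alpha<2\beta$, is what ensures the Hardy constants remain $o(t^{1/\alpha})$; I would first try a weighted 1D Poincaré inequality along each normal ray, with the weight bounded from both sides.
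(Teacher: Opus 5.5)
There is a genuine gap. You treat the Dirichlet energy carried by the boundary layer as negligible. For $n\ge 2$ it is in fact of exact order $t^{1/\alpha}$, and it produces the second term $-C_{\mathrm{P}}(0)\int_{\po}\lVert\nabla f\rVert^2\,\dd\tilde\sigma$ in the constant of Theorem \ref{mainthm}.

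\textbf{Where the computation fails.} In Step 2, the claim that $\nabla h$ is ``$O(s)$ times tangential derivatives'' is false. For $h=u_0\circ p$ and $x=\theta+s\nu(\theta)$ one has $\nabla h(x)=(I+sS_\theta)^{-1}\nabla_{\po}u_0(\theta)=\nabla_{\po}u_0(\theta)+O(s)$, where $S_\theta$ is the shape operator of $\po$. Only the normal derivative vanishes; the tangential gradient is of order one. Hence
\begin{equation*}
\int_{\oc}\lVert\nabla h\rVert^2\,\dd\mu_t=\frac{Z_t-|\om|}{Z_t}\int_{\po}\lVert\nabla_{\po}u_0\rVert^2\,\dd\tilde\sigma+o(t^{1/\alpha}).
\end{equation*}
This is not $o(t^{1/\alpha})$ as soon as $u_0|_{\po}$ is nonconstant; on a disk, for instance, $u_0=J_1(kr)\cos\phi$. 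Since $\partial_\nu u_0=0$, one has $\lVert\nabla_{\po}u_0\rVert=\lVert\nabla u_0\rVert$ on $\po$, so this is exactly the paper's gradient term.

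\textbf{Consequences.} With your accounting, the lower bound of Step 2 overshoots the true value and is false. Step 3(iii) is legitimate, but it discards precisely this first-order contribution. It therefore only yields $C_{\mathrm{P}}(t)\le C_{\mathrm{P}}(0)+(Z_t-|\om|)\sup_f\int_{\po}f^2\,\dd\tilde\sigma+o(t^{1/\alpha})$, which is not sharp. Your two bounds ``match'' only because both omit the same term. Whatever your ``$\cdots$'' was meant to be, Step 3 as designed cannot produce a gradient term. The scheme closes only in dimension one (Theorem \ref{1d}), where there is no tangential direction and the eigenfunction's derivative vanishes at the endpoints.

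\textbf{The missing ingredient.} To get the matching upper bound on $C_{\mathrm{P}}(t)$, you must keep the outer energy. You need to show that, along optimizers, it equals $(Z_t-|\om|)\int_{\po}\lVert\nabla g_t\rVert^2\,\dd\tilde\sigma+o(t^{1/\alpha})$, and then pass to the limit by lower semicontinuity. This needs second-order control that near-optimizers with only $H^1$ bounds cannot give: the trace of $\nabla h_t$ on $\po$ is not even controlled in your Step 3.

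The paper takes exact eigenfunctions, $L_tg_t=-g_t/C_{\mathrm{P}}(t)$, and uses Lemma \ref{pasencore} (integrated Bochner formula plus convexity of $V$). This gives $\int\lVert\nabla^2 g_t\rVert_F^2\,\dd\mu_t\le C_{\mathrm{P}}(t)^{-2}\int g_t^2\,\dd\mu_t$. That bound lets one apply Lemma \ref{magicnd} to each component $\partial_i g_t$. It also gives uniform $H^2(\om)$ bounds, whence $\int_{\po}\lVert\nabla f\rVert^2\,\dd\tilde\sigma\le\liminf_{t\to0}\int_{\po}\lVert\nabla g_t\rVert^2\,\dd\tilde\sigma$. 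This is where convexity is really used.

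Without this regularity input you would need a substitute. One option is a weak lower semicontinuity argument for the tangential energy in the rescaled layer $s=t^{1/\alpha}u$. There, the $O(t^{1/\alpha})$ bound on the normal energy forces $h_t$ to be nearly constant along normal rays, so that its tangential energy is asymptotically bounded below by that of the trace $f|_{\po}$.

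Your Step 1 and the ray-wise one-dimensional Poincaré control in Step 3(ii) are fine and correspond to Lemma \ref{zt} and Lemma \ref{magicnd}.
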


\begin{remark}
\normalfont
    $\Lambda_{\Omega,V}$ is generically nonzero, as shown by the one-dimensional example in Subsection \ref{refutation} and Remark \ref{lam}.
\end{remark}
\begin{remark}
    \normalfont
    Although it should not be essential, the convexity of $V$ is very convenient when proving Theorem \ref{mainthm} because it allows to control $\lVert \nabla ^2f\rVert$ with $\Gamma_2(f,f)$ (see Definition \ref{carreduchamp}) whenever $f$ is a smooth function (see Proposition \ref{bochner} and Lemma \ref{pasencore} ). In addition, it enables us to bound the Poincaré constant of $e^{-V}$ by the variance of the normalized measure (see Proposition \ref{kls1d} and Lemma \ref{klslemma}).
\end{remark}

Our second main result is about the low-temperature asymptotics of the log-Sobolev constant in dimension $1$. In this regime, $C_{\mathrm{LS}}(\mu_t)-C_{\mathrm{LS}}(\mu_0)$ scales like $\sqrt{t}$, with the precise behavior depending on the behavior of $V$ near the boundary of the minimizing set.

\begin{theorem}\label{lsi1d}
    Let $V \colon \mathbb{R} \to \mathbb{R}$ a potential satisfying a Polyak-Łojasiewicz inequality, having multiple minimizers. Let $a<b$ be real numbers such that $[a,b] = \mathrm{argmin}(V)$. Assume $V_{|[b,+\infty)}$ and $V_{|(-\infty,a]}$ are $C^2$ and are such that $\dfrac{V''}{1+V'^2}$ is bounded above on $\mathbb{R}\backslash[a,b]$, then
    \begin{equation*}
        \dfrac{C_{\mathrm{LS}}(\mu_t)-C_{\mathrm{LS}}(\mu_0)}{\sqrt{t}}\xrightarrow{t\to 0^+}\sqrt{2}(b-a)\pi^{-\frac{3}{2}}\big(V''(a^-)^{-\frac{1}{2}}+{V''(b^+)^{-\frac{1}{2}}}\big).   
    \end{equation*}
\end{theorem}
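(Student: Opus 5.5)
The plan is to compare everything with the uniform measure $\mu_0$ on $[a,b]$. For $\mu_0$ the log-Sobolev and Poincaré constants coincide: with the normalisation \eqref{lsidef}, $C_{\mathrm{LS}}(\mu_0)=C_{\mathrm{P}}(\mu_0)=(b-a)^2/\pi^2$ (the classical one-dimensional result that the uniform measure on an interval has equal LSI and spectral constants). The claimed limit is then exactly the first-order variation of $L^2/\pi^2$ in $L$. Its derivative is $2L/\pi^2$. At the endpoint $b$, the measure $\mu_t$ develops a half-Gaussian tail of width $\sigma_b=\sqrt{t/V''(b^+)}$. The mean excursion of that tail is $\sigma_b\sqrt{2/\pi}$. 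Hence
$$\frac{2(b-a)}{\pi^2}\,\sqrt{\frac{2t}{\pi}}\,V''(b^+)^{-1/2}=\sqrt2(b-a)\pi^{-3/2}V''(b^+)^{-1/2}\sqrt t,$$
and likewise at $a$. So the proof should show that, up to $o(\sqrt t)$, $\mu_t$ behaves like the uniform measure on an interval lengthened by these effective amounts on each side.

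\textbf{Lower bound.} I would use $C_{\mathrm{LS}}(\mu_t)\ge C_{\mathrm{P}}(\mu_t)$ and obtain the Poincaré asymptotics from Theorem \ref{mainthm}, specialised to $n=1$, $\Omega=(a,b)$, $\alpha=2$ and $a(p(x))=2/V''$ at the endpoints. The issue is that Theorem \ref{mainthm} assumes convexity of $V$, while here $V$ is only PL. Convexity is needed only near $[a,b]$, which is where the mass lives. Alternatively, I would compute directly with the test function $h=\cos\bigl(\pi(x-a)/(b-a)\bigr)$, extended to be constant outside $[a,b]$, and correct it to first order in the tails. Expanding $\mathrm{Var}_{\mu_t}(h)/\int h'^2\,\dd\mu_t$ in $\sqrt t$ should give the constant $\Lambda$ matching the right-hand side. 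I would check this explicitly using the Gaussian moments $\int_0^\infty e^{-V''s^2/2t}\,\dd s$ and $\int_0^\infty s\,e^{-V''s^2/2t}\,\dd s$.

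\textbf{Upper bound: $C_{\mathrm{LS}}(\mu_t)\le C_{\mathrm{P}}(\mu_t)+o(\sqrt t)$.} I expect this step to be the main obstacle, and I would argue by contradiction with a blow-up/compactness scheme. Take $h_t$ almost extremal for the LSI, normalised by $\int h_t^2\,\dd\mu_t=1$. By Rothaus' lemma, $\operatorname{Ent}(h^2)\le\operatorname{Ent}((h-m)^2)+2\operatorname{Var}(h)$. Two regimes then need to be handled.
\begin{compactenumi}
\item If $h_t\to 1$, i.e.\ $h_t=1+\varepsilon g$ with $\varepsilon\to 0$, linearising the entropy gives $\operatorname{Ent}(h^2)\approx 2\varepsilon^2\operatorname{Var}(g)$. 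The ratio is then controlled by $C_{\mathrm{P}}(\mu_t)$ up to $O(\varepsilon)$ errors. The errors must be $o(\sqrt t)$, which requires a quantitative rate for $\varepsilon$.
\item Otherwise, on the interior $[a,b]$ the restricted measure is close to uniform. There the known strict inequality, namely that non-constant functions far from constants have LSI ratio strictly below $(b-a)^2/\pi^2$ by a definite margin (a stability statement for the interval), leaves room of order $1$. The tails carry mass $O(\sqrt t)$. Their contribution can be bounded using the PL inequality together with the bound on $V''/(1+V'^2)$, following the Chewi--Stromme argument of Theorem \ref{CS} restricted to $[b,\infty)$ and $(-\infty,a]$, which costs only $O(t)$.
\end{compactenumi}
Gluing the interior and the tails would be done with a cutoff at distance $t^{1/2-\eta}$ from the endpoints.

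The delicate point is regime (i), uniformly in $t$. There I would exploit one-dimensionality: the extremal equation is an ODE, so its solutions can be compared with the explicit $\cos$ profile. This should give $\varepsilon_t\to0$ at a rate making the remainder $o(\sqrt t)$.
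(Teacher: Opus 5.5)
Your lower bound is the paper's Step 5: the sine profile $u$ on $[a,b]$, extended by constants, combined with $C_{\mathrm{LS}}\ge C_{\mathrm{P}}$. No first-order tail correction is needed. Your heuristic constant is off, though. The effective lengthening at $b$ is the tail \emph{mass} $\int_0^\infty e^{-V''(b^+)s^2/2t}\,\dd s=\sigma_b\sqrt{\pi/2}$, not the mean excursion $\sigma_b\sqrt{2/\pi}$, so your displayed ``identity'' is false by a factor $2/\pi$.

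The genuine gap is regime (i) of the upper bound. You linearize $h_t=1+\varepsilon_t g_t$ on all of $\R$, accept an $O(\varepsilon_t)$ error, and therefore need $\varepsilon_t=o(\sqrt t)$. Near-optimality does not give this. For $\int g\,\dd\mu_t=0$ one has
$\operatorname{Ent}_{\mu_t}((1+\varepsilon g)^2)=2\varepsilon^2\int g^2\,\dd\mu_t+\frac{2}{3}\varepsilon^3\int g^3\,\dd\mu_t+O(\varepsilon^4)$.
Moreover $\int g^3\,\dd\mu_t\to0$ when $g$ is close to the odd profile $u$. So $o(\sqrt t)$-optimality does not force $\varepsilon_t=o(\sqrt t)$, and your error term need not be $o(\sqrt t)$. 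The ``ODE comparison with the cosine'' is a hope rather than an argument, and it presupposes an extremal for $\mu_t$, which is not available for this non-compactly supported measure.

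Your reduction to $C_{\mathrm{P}}(\mu_t)$ has a second problem. It also needs the \emph{upper} Poincaré asymptotics of $\mu_t$, which you take from Theorem \ref{mainthm}. That theorem uses global convexity (Lemma \ref{pasencore}, Proposition \ref{klslemma}). The claim that convexity near $[a,b]$ suffices is an unproved localization; a bounded-perturbation argument fails because the change in $V/t$ is of order $1/t$.

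The missing idea is to never linearize the bulk. The paper proceeds as follows.
\begin{compactenumi}
\item It first shows, by compactness and the nonexistence of extremals on a segment (its Step 2, your regime (ii)), that near-extremizers satisfy $f_t\to1$ uniformly on $I$ and $\int|f_t'|^2\,\dd\mu_t\to0$.
\item It then replaces $f_t$ by $f_t\circ\mathrm{proj}_{[a,b]}$ at cost $o(\sqrt t)$. This is Step 3, a second-order computation using the $O(t)$ Poincaré/log-Sobolev constants of the half-line pieces, which come from Theorem \ref{CS}.
\item It normalizes $\frac{1}{b-a}\int_I f_t^2\,\dd x=1$ and applies the \emph{exact} log-Sobolev inequality of $\mu_0$ on $I$, whose constant is $C_{\mathrm{LS}}(\mu_0)=C_{\mathrm{P}}(\mu_0)$ (Proposition \ref{segment}).
\item Only the tail terms $r_t\varphi(f_t(b)^2)$, $l_t\varphi(f_t(a)^2)$ and $\varphi(\int f_t^2\,\dd\mu_t)$ are Taylor expanded, with $r_t,l_t$ the tail masses. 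Since these carry mass $O(\sqrt t)$, the remainders are $O(\sqrt t\,\|g_t\|_\infty^3+t\|g_t\|_\infty^2)=o(\sqrt t)\int_I|g_t'|^2\,\dd\mu_t$, where $g_t=f_t-1$. This uses only the qualitative fact $\|g_t\|_\infty\to0$.
\end{compactenumi}
This yields $C_{\mathrm{LS}}(\mu_t)\le C_{\mathrm{LS}}(\mu_0)+\big(r_tg_t(b)^2+l_tg_t(a)^2\big)/\int_I|g_t'|^2\,\dd\mu_t+o(\sqrt t)$. The quotient is then identified, again only qualitatively, by showing that the normalized fluctuations converge to $\pm u$ via stability of the Poincaré inequality on $I$. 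This route needs no rate, no extremal for $\mu_t$, and no Poincaré asymptotics for $\mu_t$, hence no convexity.
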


\begin{remark}
 \normalfont
    Although we expect the result of Theorem \ref{lsi1d} to remain valid in any dimension (up to minor adjustments), our proof relies heavily on several properties that are specific to dimension $1$. Namely, we use the fact that there are no extremal functions for the logarithmic Sobolev inequality on a line segment (see Remark \ref{extremal}), a feature that fails for generic convex domains in higher dimensions \cite{Rothaus1981Diffusion}. The proof also makes essential use of the fact that $\mathbb{R}\backslash[a,b]$ is a finite union of convex sets (in fact, exactly $2$) to apply the Rothaus lemma and to bound various integrals over $\mathbb{R}\backslash [a,b]$.
\end{remark}

\paragraph{Acknowledgements.} I am deeply grateful to Max Fathi for his guidance, support and very helpful suggestions.\\
The author has received support under the program "Investissement d'Avenir" launched by the French Government and implemented by ANR, with the reference ANR-18-IdEx-0001 as part of its program "Emergence".  He was also supported by the Agence Nationale de la Recherche (ANR) Grant ANR-23-CE40-0003 (Project CONVIVIALITY).

\section{Preliminaries on functional inequalities}\label{prelim}
In this section, we recall the basic results that we will use throughout the remainder of this work.

\paragraph{The Poincaré inequality as a spectral gap.}
The Poincaré inequality can be seen as a consequence of a spectral gap for a diffusion operator \cite{bakry2014analysis} : let $\mu =\frac{1}{Z} e^{-V}$ be a probability measure satisfying a Poincaré inequality and $L_V = \Delta -\nabla V\cdot \nabla$ a diffusion operator, then the operator $(-L_V)$ is symmetric in $L^2(\mu)$ and has (under mild assumptions) spectrum $0= \lambda_0 < \lambda_1 \leq \dots$,
where $\lambda_0=0$ corresponds to constant functions. Therefore,
$$\lambda_1=\inf\Bigg\{\dfrac{\di \lVert\nabla f\rVert^2\,\dd \mu}{\di f^2\,\dd \mu} \colon f \in H^1(\mu), \di f\,\dd \mu=0\Bigg\}.$$
Hence $C_{\mathrm{P}}(\mu)=\dfrac{1}{\lambda_1}$, and the Poincaré constant reduces to the inverse of the spectral gap.
When $\mu$ is the uniform measure on a smooth bounded domain, then $C_{\mathrm{P}}(\mu)$ also corresponds to the inverse of the spectral gap of the Neumann Laplacian.

\vspace{10pt}
When the potential $V$ is convex, the Poincaré constant is always finite and controlled by the variance of the measure $\mu$ \cite{bobkov}, and we will need the following proposition later.
\begin{proposition}\label{kls1d}
Let $\mu$ be a log-concave probability measure on $\mathbb{R}^n$, then
    $$C_P(\mu) \leq C_n\di \left\Vert x - \int y\,\dd \mu(y)\right\Vert^2 \,\dd \mu(x)$$
where $C_n$ is a constant depending only on the dimension $n$.
\end{proposition}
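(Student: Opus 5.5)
This is the Kannan–Lovász–Simonovits-type bound. The plan is not to reprove it from scratch but to reduce it to known log-concave isoperimetry (Bobkov's argument). Since the statement is invariant under translation, we first assume $\int y\,\dd\mu(y)=0$. We then write $\sigma^2=\int\lVert x\rVert^2\,\dd\mu$. By standard approximation we may also take $V$ smooth and $\mu$ with full support; truncating the density and mollifying preserves log-concavity, and both sides of the inequality pass to the limit.

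\emph{Step 1: Cheeger's inequality.} For any probability measure,
\[
C_P(\mu)\le \frac{4}{h(\mu)^2},
\]
where $h(\mu)$ is the Cheeger isoperimetric constant. It therefore suffices to show $h(\mu)\ge c_n/\sigma$ for log-concave $\mu$.

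\emph{Step 2: lower bound on the Cheeger constant.} By Markov's inequality, the ball $B=B(0,2\sigma)$ carries $\mu(B)\ge 3/4$. On this ball we use the log-concave localization or needle-decomposition argument of Kannan–Lovász–Simonovits. This reduces the isoperimetric inequality to one-dimensional log-concave densities on segments of length at most $4\sigma$ (after restricting to $B$). For these densities one checks directly that the Cheeger constant is at least $c/\text{length}$.

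This yields
\[
\mu_s(\partial A)\ \ge\ \frac{c}{\sigma}\,\min\bigl(\mu(A\cap B),\mu(B\setminus A)\bigr).
\]
The mass outside $B$ is at most $1/4$. A standard argument comparing the sets $A$ and $A\cap B$ converts this into
\[
\mu_s(\partial A)\ge \frac{c'}{\sigma}\min\bigl(\mu(A),1-\mu(A)\bigr)
\]
whenever $\min(\mu(A),1-\mu(A))$ is not too small. Small sets are handled by Bobkov's observation that for log-concave measures the isoperimetric profile is concave. Hence $I(p)\ge 2\min(p,1-p)\,I(1/2)$, so one point of control suffices.

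\emph{Step 3: conclude.} Combining the two steps gives
\[
C_P(\mu)\le \frac{4\sigma^2}{c'^2}=C_n\sigma^2 .
\]
The constant $C_n$ may even be taken dimension-free in Bobkov's version, but we only need dependence on $n$.

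The main obstacle is Step 2, in particular the localization lemma together with the passage from control on the bulk ball to the whole space. Rather than redo it, I would cite Bobkov's theorem \cite{bobkov} directly, since the paper treats this as a recalled preliminary and only needs the stated variance bound.
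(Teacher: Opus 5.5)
Your proposal is correct and follows the paper's route: the paper gives no proof of Proposition~\ref{kls1d} and simply cites Bobkov~\cite{bobkov}. Your sketch is a faithful outline of that known argument: Cheeger's inequality, then a diameter-type isoperimetric bound on $B(0,2\sigma)$ via localization, then concavity of the isoperimetric profile. Your remark that the constant can be taken universal, since the right-hand side is the trace of the covariance, is also correct.
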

Recent developments by Klartag \cite{Klartag2023LogarithmicBF, Klartag2024IsoperimetricII} show that $C_n = O(\log(n))$.

\paragraph{The Bakry-Émery criterion and the $\Gamma$-calculus.} In the context of functional inequalities, the Bakry-Émery criterion is one of the simplest ways to show log-Sobolev or Poincaré inequalities. This criterion goes back to Bakry and Émery \cite{Bakry1985}, with many developments since then \cite{bakry2014analysis}.
\begin{definition}\label{carreduchamp}
    Given a diffusion operator $L$, the ``carré du champ'' operator $\Gamma$ is defined as $$2\Gamma(f,g) = L(fg)-fLg-gLf$$
    for functions $f$ and $g$. The $\Gamma_2$ operator is then defined as 
    $$2\Gamma_2(f,g)=L\Gamma(f,g)-\Gamma(f,Lg)-\Gamma(g,Lf).$$
    A diffusion operator $L$ is said to satisfy a curvature-dimension condition $CD(\rho, \infty)$ if $$\Gamma_2(f,f) \geq \rho \Gamma(f,f)$$ for all smooth compactly supported functions $f$.
\end{definition}
A curvature-dimension condition $CD(\rho, \infty)$ satisfied by a diffusion operator implies a Poincaré and a log-Sobolev inequalities for the corresponding measure \cite{bakry2014analysis}. Here is the precise statement.
\begin{proposition}[Bakry-Émery]\label{be}
    Let $L_V=\Delta - \nabla V \cdot \nabla$ be a diffusion operator satisfying a curvature-dimension condition $CD(\rho, \infty)$ and let $\mu = \frac{1}{Z}e^{-V}$ be a probability measure, then
    $$C_{\mathrm{P}}(\mu) \leq \frac{1}{\rho} \text{ and }
     C_{\mathrm{LS}}(\mu) \leq \frac{1}{\rho}.$$
\end{proposition}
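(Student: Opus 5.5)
The approach is the standard $\Gamma$-calculus semigroup argument. I first treat the Poincaré bound and then derive the log-Sobolev bound from the same commutation identity. Throughout I assume enough regularity on $V$ (smooth, $\rho>0$, $\mu$ finite) that the semigroup $P_t=e^{tL_V}$ is well defined on smooth compactly supported functions, preserves a core of nice functions, and is ergodic. Ergodicity here means $P_tf\to\int f\,\dd\mu$ in $L^2(\mu)$ as $t\to\infty$. Write $L=L_V$ and $\Gamma(f)=\Gamma(f,f)=\lVert\nabla f\rVert^2$.

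\textbf{Step 1 (gradient commutation).} From $CD(\rho,\infty)$ I will derive $\Gamma(P_tf)\le e^{-2\rho t}P_t\Gamma(f)$. Fix $t$ and set $\Lambda(s)=P_s\Gamma(P_{t-s}f)$ for $s\in[0,t]$. Differentiating and using the definition of $\Gamma_2$ gives
$$\Lambda'(s)=P_s\big(L\Gamma(g)-2\Gamma(g,Lg)\big)=2P_s\Gamma_2(g)\ge 2\rho\Lambda(s),\qquad g=P_{t-s}f.$$
Grönwall then yields $\Lambda(0)\le e^{-2\rho t}\Lambda(t)$. Without curvature information I would also note the weaker form, but the strong form above is what I need.

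\textbf{Step 2 (Poincaré).} I write
$$\operatorname{Var}_\mu(f)=-\int_0^\infty \frac{\dd}{\dd t}\int (P_tf)^2\,\dd\mu\,\dd t=2\int_0^\infty\int\Gamma(P_tf)\,\dd\mu\,\dd t,$$
using symmetry of $L$ and ergodicity. Step 1 together with invariance $\int P_t h\,\dd\mu=\int h\,\dd\mu$ bounds the right-hand side by
$$2\int_0^\infty e^{-2\rho t}\,\dd t\int\Gamma(f)\,\dd\mu=\frac1\rho\int\lVert\nabla f\rVert^2\,\dd\mu,$$
which gives $C_{\mathrm P}(\mu)\le 1/\rho$.

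\textbf{Step 3 (log-Sobolev).} I apply the entropy version with $h^2=f>0$. Integrating $\frac{\dd}{\dd t}\int P_tf\log P_tf\,\dd\mu=-\int\frac{\Gamma(P_tf)}{P_tf}\,\dd\mu$ gives
$$\operatorname{Ent}_\mu(f)=\int_0^\infty\int\frac{\Gamma(P_tf)}{P_tf}\,\dd\mu\,\dd t .$$
To bound the integrand I need the strengthened commutation $\sqrt{\Gamma(P_tf)}\le e^{-\rho t}P_t\sqrt{\Gamma(f)}$. Cauchy–Schwarz then gives $(P_t\sqrt{\Gamma f})^2\le P_tf\,P_t(\Gamma f/f)$, so
$$\int\frac{\Gamma(P_tf)}{P_tf}\,\dd\mu\le e^{-2\rho t}\int\frac{\Gamma f}{f}\,\dd\mu .$$
Integrating in $t$ gives
$$\operatorname{Ent}_\mu(f)\le\frac{1}{2\rho}\int\frac{\Gamma f}{f}\,\dd\mu=\frac{2}{\rho}\int\lVert\nabla h\rVert^2\,\dd\mu,$$
which matches \eqref{lsidef} with $C=1/\rho$.

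\textbf{Main obstacle.} The hard step is the strengthened bound $\sqrt{\Gamma(P_tf)}\le e^{-\rho t}P_t\sqrt{\Gamma f}$. For the operator $L=\Delta-\nabla V\cdot\nabla$ it requires improving $\Gamma_2$: by Bochner, $\Gamma_2(g)=\lVert\nabla^2g\rVert^2+\nabla^2V(\nabla g,\nabla g)$, and the needed improvement is $\Gamma_2(g)\ge\rho\Gamma(g)+\lVert\nabla\sqrt{\Gamma g}\rVert^2$. This follows from $\lVert\nabla^2g\rVert^2\ge\lVert\nabla\lvert\nabla g\rvert\rVert^2$, which is Kato's inequality via Cauchy–Schwarz. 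With this, I rerun Step 1 with $\Lambda(s)=P_s\sqrt{\Gamma(P_{t-s}f)}$. To avoid dividing by $0$, I regularize with $\sqrt{\varepsilon+\Gamma}$ and let $\varepsilon\to0$.

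The remaining technicalities are justifying the differentiation under the integral and the limit $t\to\infty$. I would handle these by working with smooth compactly supported $f$ and invoking standard semigroup regularity as in \cite{bakry2014analysis}.
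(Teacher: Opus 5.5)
Your argument is correct, and it is the standard $\Gamma$-calculus semigroup proof from \cite{bakry2014analysis}; the paper gives no proof of its own here and simply cites that reference. The one step you leave implicit is that Kato's bound has to be paired with the pointwise bound $\nabla^2 V\succeq\rho$, which you get from $CD(\rho,\infty)$ by evaluating $\Gamma_2$ at a point on test functions whose Hessian vanishes there; alternatively, differentiating the Fisher information, $\frac{\dd}{\dd t}\int P_tf\,\Gamma(\log P_tf)\,\dd\mu=-2\int P_tf\,\Gamma_2(\log P_tf)\,\dd\mu$, yields the log-Sobolev bound from $CD(\rho,\infty)$ alone, with no strengthened commutation.
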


The $CD(\rho, \infty)$ condition is automatically satisfied when $V$ is $\rho$-strongly convex. In fact, for regular functions $f$, it holds that \cite{bakry2014analysis}
\begin{equation} 
  \begin{cases}
  \Gamma(f,f) = \lVert \nabla f \rVert ^2\\
  \Gamma_2(f,f)=  \langle\nabla ^2V \nabla f ,\nabla f\rangle + \lVert \nabla^2f\rVert^2_F
\end{cases}
\nonumber
\end{equation}
where $\lVert \cdot\rVert_F$ is the Frobenius (or Hilbert-Schmidt) norm on matrices. An additional consequence of the two equalities above, that will be useful in the sequel, is obtained when integrating $\Gamma_2(f)$ with respect to the measure $\mu = e^{-V}$ and using the fact that the operator $L_V$ is symmetric in $L^2(e^{-V})$ \cite{bakry2014analysis}.
\begin{proposition}[Bochner's integrated formula]\label{bochner}
Keeping the notation from Proposition \ref{be}, let $f$ be a sufficiently regular function, then 
$$\di (L_Vf)^2\,\dd \mu=\di \Gamma_2(f,f)\,\dd \mu = \di \langle\nabla ^2V \nabla f ,\nabla f\rangle + \lVert \nabla^2f\rVert^2_F\,\dd \mu.$$
\end{proposition}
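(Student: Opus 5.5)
We will prove the integrated Bochner formula in two steps: first an integration by parts identifying $\int (L_Vf)^2\,\dd\mu$ with $\int \Gamma_2(f,f)\,\dd\mu$, and then the pointwise expression of $\Gamma_2$ recalled just before Proposition \ref{be}. By "sufficiently regular" we take $f$ smooth and compactly supported; general $f$ in the natural domain is then handled by approximation.

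\textbf{Step 1: the abstract identity.} Since $L_V$ is symmetric in $L^2(\mu)$, for compactly supported smooth $g$ we have $\int L_V g\,\dd\mu = \int g\, L_V 1\,\dd\mu = 0$. We apply this to $g=\Gamma(f,f)$, which gives $\int L_V\Gamma(f,f)\,\dd\mu=0$. The definition of $\Gamma_2$ then yields
$$\int \Gamma_2(f,f)\,\dd\mu = -\int \Gamma(f,L_Vf)\,\dd\mu .$$
The definition of $\Gamma$, together with the same vanishing of $\int L_V(\cdot)\,\dd\mu$, gives the integration-by-parts formula
$$\int \Gamma(f,g)\,\dd\mu = -\int f\,L_V g\,\dd\mu .$$
We apply it with $g=L_Vf$ (the roles are symmetric), so that
$$-\int \Gamma(f,L_Vf)\,\dd\mu=\int (L_Vf)^2\,\dd\mu .$$
Concretely, this is just $\int \nabla f\cdot\nabla h\, e^{-V} = -\int h\,(\Delta f-\nabla V\cdot\nabla f)e^{-V}$, obtained by the divergence theorem with no boundary terms.

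\textbf{Step 2: the pointwise formula.} We verify $\Gamma(f,f)=\lVert\nabla f\rVert^2$ and
$$\Gamma_2(f,f)=\langle\nabla^2V\nabla f,\nabla f\rangle+\lVert\nabla^2 f\rVert_F^2 .$$
For the latter, expand $\tfrac12 L_V\lVert\nabla f\rVert^2$:
\begin{itemize}
\item $\tfrac12\Delta\lVert\nabla f\rVert^2=\lVert\nabla^2f\rVert_F^2+\nabla f\cdot\nabla\Delta f$;
\item $\tfrac12\nabla V\cdot\nabla\lVert\nabla f\rVert^2=\langle\nabla^2 f\,\nabla f,\nabla V\rangle$.
\end{itemize}
Next, $\nabla f\cdot\nabla(L_Vf)=\nabla f\cdot\nabla\Delta f-\langle\nabla^2V\nabla f,\nabla f\rangle-\langle\nabla^2 f\nabla V,\nabla f\rangle$. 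Subtracting, the third-order terms and the mixed terms cancel, which is the Bochner identity in flat space. This step requires $V\in C^2$; under Assumption \ref{assumptions} one only has $C^1$, so we would interpret $\nabla^2V$ distributionally or first mollify $V$.

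\textbf{The main obstacle} is the regularity and justification: removing compact support and handling $V$ that is only $C^{1}$. We plan to first prove the formula for $f\in C_c^\infty$ and $V$ smooth. When $V$ is only $C^{1}$ and convex, $\nabla^2V$ is a nonnegative matrix-valued measure. We would mollify $V$ (convexity is preserved), pass to the limit using lower semicontinuity for the nonnegative Hessian term, and then extend to $f$ in the domain of $L_V$ by density of $C_c^\infty$ in the graph norm. Fatou's lemma then gives at least the inequality
$$\int \lVert\nabla^2f\rVert_F^2\,\dd\mu\le\int (L_Vf)^2\,\dd\mu,$$
which is the form actually used later in the paper.
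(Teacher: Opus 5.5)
Your proposal is correct and follows the paper's route. The paper only sketches the argument: integrate $\Gamma_2(f,f)$, use the symmetry of $L_V$ in $L^2(\mu)$ to kill $\int L_V\Gamma(f,f)\,\mathrm{d}\mu$ and to turn $-\int\Gamma(f,L_Vf)\,\mathrm{d}\mu$ into $\int (L_Vf)^2\,\mathrm{d}\mu$, then apply the pointwise formula for $\Gamma_2$ (citing Bakry--Gentil--Ledoux). Your Steps 1--2 carry out exactly this. Your regularity fallback (mollify $V$ while preserving convexity, drop the nonnegative Hessian term, and keep only $\int\lVert\nabla^2 f\rVert_F^2\,\mathrm{d}\mu\le\int (L_Vf)^2\,\mathrm{d}\mu$) is the same device the paper uses in the proof of Lemma \ref{pasencore}.
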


\paragraph{Comparing log-Sobolev and Poincaré constants}
A classical linearization argument \cite{bakry2014analysis} shows that $C_{\mathrm{P}}(\mu) \leq C_{\mathrm{LS}}(\mu)$, and therefore LSI is stronger than the Poincaré inequality. The inequality is strict for a generic measure. However, the uniform measure on a line segment provides an example where equality holds \cite{gentil2004logsob}. Indeed,
\begin{proposition}\label{segment}
    Let $a>0$ and $\mu_a$ be the uniform measure on $[0,a]$, then
    $$C_{\mathrm{LS}}(\mu_a)=C_{\mathrm{P}}(\mu_a)=\dfrac{a^2}{\pi^2}.$$
\end{proposition}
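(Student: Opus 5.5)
By scaling, it suffices to treat $a=\pi$, i.e.\ the uniform measure $\mu$ on $[0,\pi]$ with normalized density $\frac{1}{\pi}$. The target claim is then $C_{\mathrm{P}}(\mu)=C_{\mathrm{LS}}(\mu)=1$. For general $a$, the change of variables $x\mapsto \pi x/a$ multiplies the Dirichlet energy by $\pi^2/a^2$ and leaves variance and entropy unchanged, so both constants scale like $a^2/\pi^2$.

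The plan has three steps.

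\textbf{Poincaré constant.} By the spectral-gap characterization recalled above, $C_{\mathrm{P}}(\mu)$ is the inverse of the first nonzero eigenvalue of the Neumann Laplacian on $[0,\pi]$. Expanding in the cosine basis $\cos(kx)$, $k\ge 0$, the eigenvalues are $k^2$, so $\lambda_1=1$. Hence $C_{\mathrm{P}}(\mu)=1$, with extremal function $\cos x$.

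\textbf{Lower bound for LSI.} The linearization argument gives $C_{\mathrm{LS}}(\mu)\ge C_{\mathrm{P}}(\mu)=1$. Concretely, one takes $h=1+\varepsilon\cos x$ and expands the entropy to second order in $\varepsilon$.

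\textbf{Upper bound $C_{\mathrm{LS}}(\mu)\le 1$.} This is the main obstacle, since the uniform measure has no strict convexity and Proposition~\ref{be} gives nothing directly. I would reduce to the circle. Given $h$ on $[0,\pi]$, extend it evenly to $[-\pi,\pi]$ and then periodically, producing a function $\tilde h$ on $\mathbb{S}^1=\mathbb{R}/2\pi\mathbb{Z}$. The entropy and the Dirichlet energy with respect to normalized uniform measure are both preserved, since each is just doubled on the doubled interval and then renormalized. It therefore suffices to know that the uniform measure on the circle of length $2\pi$ satisfies LSI with constant $1$, i.e.
\[
\operatorname{Ent}(h^2)\le 2\int (h')^2.
\]
This is the classical sharp inequality of Rothaus/Weissler (Emery–Yukich) on the circle. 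If a self-contained argument is wanted, I would prove it via the heat semigroup on $\mathbb{S}^1$ in the Bakry–Émery style. The curvature is $0$, so the plain $\Gamma_2$ criterion fails, and one must instead use the $CD(0,1)$ dimensional improvement $\Gamma_2(f)\ge (Lf)^2$ together with the spectral gap $1$. The cleanest route I would attempt is a Gaussian-to-circle argument, or to cite the known sharp result directly, as is done in the literature the paper refers to \cite{gentil2004logsob}.

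Combining the three steps gives $C_{\mathrm{LS}}(\mu_a)=C_{\mathrm{P}}(\mu_a)=a^2/\pi^2$.
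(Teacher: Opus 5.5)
Your proposal is correct. The scaling reduction, the cosine expansion giving $C_{\mathrm{P}}=1$ on $[0,\pi]$, the linearization lower bound, and the even reflection to $\mathbb{R}/2\pi\mathbb{Z}$ are all sound. The cosine expansion needs no boundary condition on $h$, because integration by parts against $\sin(kx)$ produces no boundary terms. The reflection preserves both $\operatorname{Ent}(h^2)$ and the normalized Dirichlet energy.

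The paper gives no proof of its own and simply quotes the result from the literature \cite{gentil2004logsob,WEISSLER1980218}. Your reduction to the sharp circle inequality $\operatorname{Ent}(h^2)\le 2\int (h')^2$ is the standard derivation, and that circle inequality is the only nontrivial input. You should cite it (Weissler, or the Emery--Yukich $\Gamma_2$ proof) rather than pursue the ``Gaussian-to-circle'' idea you mention, which is not an established route.
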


\begin{remark}\label{extremal}
\normalfont
    As for line segments, uniform measures on convex sets with finite volume satisfy a Poincaré inequality and a log-Sobolev inequality. That is why $C_{\mathrm{LS}}(\mu_0)$ and $C_{\mathrm{P}}(\mu_0)$ (in Conjecture \ref{conj} and Theorem \ref{thm1}) are well-defined as long as Assumption \ref{assumptions} is satisfied.
\end{remark}

\begin{remark}\label{weiss}
\normalfont
There are no extremal functions for the log-Sobolev inequality associated with the uniform probability measure on a line segment. This fact can already be deduced from the proof of the inequality itself (see, e.g., \cite{WEISSLER1980218, gentil2004logsob}).
For a general probability measure $\mu$ such that $C_{\mathrm{LS}}(\mu)$ is finite, the inequality above, i.e. $C_{\mathrm{P}}(\mu)\leq C_{\mathrm{LS}}(\mu)$, is strict. In this case, a theorem of Rothaus \cite{Rothaus1981Diffusion} ensures the existence of an extremal function for the log-Sobolev inequality associated with $\mu$, at least when $\mu$ is compactly supported.
\end{remark}

\paragraph{About the assumptions and the Polyak-Łojasiewicz inequality.}
The Polyak-Łojasiewicz (PŁ) inequality governs the exponential rate of convergence of a function's gradient flow. Indeed,
\begin{fact}
The two following statements are equivalent for a function $V$ :
\begin{enumerate}
    \item $V$ satisfies a Polyak-Łojasiewicz inequality with constant $C$
    \item $V(y_t)-\min(V) \leq e^{-\frac{t}{C}}(V(y_0)-\min V)$ where $t\mapsto y_t$ is the gradient flow of $V$ starting from $y_0$.
\end{enumerate}
\end{fact}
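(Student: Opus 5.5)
The natural route is to differentiate $V$ along the gradient flow $\dot y_t = -\nabla V(y_t)$, with $V$ at least $C^1$ so that $t\mapsto V(y_t)$ is differentiable. We write $m=\min V$ and $g(t) = V(y_t)-m \geq 0$. The chain rule gives the energy identity
$$g'(t) = \langle \nabla V(y_t), \dot y_t\rangle = -\lVert \nabla V(y_t)\rVert^2,$$
and both implications follow from comparing this with $-g(t)/C$.

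\emph{(1) $\Rightarrow$ (2).} We apply the Polyak-Łojasiewicz inequality at the point $y_t$, which gives $\lVert\nabla V(y_t)\rVert^2 \geq g(t)/C$. Hence $g'(t)\leq -g(t)/C$ for every $t\geq 0$. Equivalently, $\frac{\dd}{\dd t}\big(e^{t/C}g(t)\big) = e^{t/C}\big(g'(t)+g(t)/C\big)\leq 0$. So $t\mapsto e^{t/C}g(t)$ is nonincreasing, and therefore $g(t)\leq e^{-t/C}g(0)$. This is exactly statement (2). It is the usual Grönwall argument.

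\emph{(2) $\Rightarrow$ (1).} Fix an arbitrary point $y_0$ and run the flow from it. Statement (2) says that $g(t)\leq e^{-t/C}g(0)$ for all $t\geq 0$, and the two sides coincide at $t=0$. So the nonnegative function $h(t) = e^{-t/C}g(0)-g(t)$ vanishes at $0$, and its right derivative there must be nonnegative:
$$0\leq h'(0^+) = -\frac{g(0)}{C}+\lVert\nabla V(y_0)\rVert^2.$$
Thus $V(y_0)-m\leq C\lVert\nabla V(y_0)\rVert^2$. Since $y_0$ was arbitrary, this is the Polyak-Łojasiewicz inequality with constant $C$.

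The only delicate point is the regularity needed for the gradient flow to exist, at least on a short time interval, and for $g$ to be differentiable at $0$.
\begin{itemize}
\item If $\nabla V$ is locally Lipschitz, the flow exists uniquely and the computation above is immediate.
\item If $V$ is merely $C^1$, Peano's theorem still provides a $C^1$ solution on a short interval starting from $y_0$. The one-sided derivative argument at $t=0$ only needs this local solution, so it goes through.
\item For the direction (1) $\Rightarrow$ (2), the bound holds on whatever interval the flow is defined.
\end{itemize}
The equivalence is understood in the sense that (2) holds for every initial point $y_0$ and every trajectory issued from it.
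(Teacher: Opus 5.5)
Your proof is correct. The paper states this as a Fact without proof, and your argument is the standard one it implicitly relies on: the energy identity $\frac{\dd}{\dd t}(V(y_t)-\min V)=-\lVert\nabla V(y_t)\rVert^2$, followed by Grönwall for (1)$\Rightarrow$(2) and the one-sided derivative at $t=0$ for (2)$\Rightarrow$(1).

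Your hedge ``on whatever interval the flow is defined'' is unnecessary. The flow cannot stop in finite time, because
\[
\int_0^T\lVert\dot y_s\rVert^2\,\dd s = V(y_0)-V(y_T)\leq V(y_0)-\min V
\]
gives $\lVert y_T-y_0\rVert\leq\sqrt{T(V(y_0)-\min V)}$, so the trajectory stays bounded on bounded time intervals. Hence (2) holds for all $t\geq 0$.
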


All strongly convex functions satisfy a Polyak-Łojasiewicz inequality \cite{bolte2010characterizations}, but in general, PŁ functions do not have a unique minimizer. For instance, all functions of the form $x \mapsto \mathrm{dist}(x,F)^2$ where $F$ is a closed subset of $\mathbb{R}^n$ satisfy a Polyak-Łojasiewicz inequality (see, e.g., \cite{garrigos2023square}). Yet, it turns out that $C^2$ PŁ functions cannot have an arbitrary set of minimizers \cite{criscitiello2025} : for example, if this set is bounded, it must necessarily be a single point. Therefore, the assumption of uniqueness of the minimizer in Theorem \ref{CS} can be removed as it is already implied by the regularity assumption and the PŁ condition. This also means that, in order to handle PŁ functions not having a unique minimizer, the regularity assumption has to be weakened, as $C^2$ regularity entails much more rigidity than $C^{1,1}_{loc}$ for PŁ functions.
The $C^{1}_{loc}$ assumption is satisfied by all functions mentioned above, of the form $x \mapsto \mathrm{dist}(x,\om)^2$ and more generally of the form $x \mapsto \mathrm{dist}(x,\om)^\alpha$ for $\alpha>1$ when $\om$ is a convex set\footnote{Interestingly, $\om$ being convex is the only case where square distance functions are $C^{1,1}_{loc}$  \cite{PoliquinRockafellarThibault2000,nejma2025polyaklojasiewiczinequalityessentiallygeneral}.}.

Moreover, a well-known fact about PŁ functions is the following quadratic growth condition :
\begin{proposition}
    Let $V$ be a function satisfying a Polyak-Łojasiewicz inequality with constant $C_{PL}$, then for all $x \in \mathbb{R}^n$,
    $$\frac{1}{4C_{PL}}\mathrm{dist}(x,\mathrm{argmin}V)^2\leq V(x)-\min V.$$
\end{proposition}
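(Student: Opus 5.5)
The plan is the classical argument via the gradient flow $\dot y_t=-\nabla V(y_t)$, with a reduction to the case $\min V=0$. Replacing $V$ by $V-\min V$ changes neither the PŁ constant nor the claimed inequality, so assume $\min V=0$ and set $F:=\mathrm{argmin} V$. We may also take $F$ nonempty and closed, which is implicit in the statement: the PŁ inequality forces every critical point to be a global minimizer, and $V$ is continuous. Fix $x\notin F$, so that $V(x)>0$, and let $y_s$ be the gradient flow started at $y_0=x$. We need $\nabla V$ locally Lipschitz for the flow to be well defined; this is the standing regularity. The goal is to bound the arc length of the trajectory, since
$$\mathrm{dist}(x,F)\le \int_0^\infty \lVert \dot y_s\rVert\,\dd s$$
as soon as $y_s$ converges to a point of $F$.

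The key computation is on $g(s):=\sqrt{V(y_s)}$, which is well defined and differentiable as long as $V(y_s)>0$. We have
$$\frac{\dd}{\dd s}V(y_s)=-\lVert\nabla V(y_s)\rVert^2 .$$
Hence, using the PŁ inequality $\sqrt{V}\le \sqrt{C_{PL}}\,\lVert\nabla V\rVert$,
$$-g'(s)=\frac{\lVert\nabla V(y_s)\rVert^2}{2\sqrt{V(y_s)}}\ \ge\ \frac{\lVert\nabla V(y_s)\rVert}{2\sqrt{C_{PL}}}=\frac{\lVert\dot y_s\rVert}{2\sqrt{C_{PL}}}.$$
Integrating from $0$ to $T$ gives
$$\int_0^T\lVert\dot y_s\rVert\,\dd s\le 2\sqrt{C_{PL}}\big(\sqrt{V(x)}-\sqrt{V(y_T)}\big)\le 2\sqrt{C_{PL}}\sqrt{V(x)}.$$
If $V(y_s)$ reaches $0$ at some finite time, the flow is stationary from then on, since it then sits at a minimizer where $\nabla V=0$, and the estimate persists.

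This uniform bound on arc length has two consequences. First, $(y_s)$ is Cauchy as $s\to\infty$, so it converges to some limit $y_\infty$. Second, the Fact above gives $V(y_s)\le e^{-s/C_{PL}}V(x)\to 0$, so by continuity $V(y_\infty)=0$, i.e. $y_\infty\in F$. Therefore
$$\mathrm{dist}(x,F)\le\lVert x-y_\infty\rVert\le 2\sqrt{C_{PL}\,V(x)}.$$
Squaring yields $\mathrm{dist}(x,F)^2\le 4C_{PL}V(x)$, which is the claim; for $x\in F$ the inequality is trivial.

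The main obstacle is the regularity bookkeeping. One must justify global existence of the flow: $V$ is nonincreasing along it, but sublevel sets may be unbounded, so we use the arc length bound to exclude blow-up on any finite time interval. One must also justify differentiating $\sqrt{V}$, which is done by working on the maximal interval where $V(y_s)>0$. If only continuity of $\nabla V$ is available, I would instead prove the result via Ekeland's variational principle or by approximating with locally Lipschitz gradients, but the flow argument is the intended route.
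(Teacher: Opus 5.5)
The paper gives no proof of this proposition; it quotes it as a well-known fact, right after the gradient-flow characterization of PŁ (the preceding Fact) and with \cite{bolte2010characterizations} in the background. Your argument is the standard one from that literature, and it is correct with the sharp constant. The estimate $-\frac{\dd}{\dd s}\sqrt{V(y_s)}\ge \lVert\dot y_s\rVert/(2\sqrt{C_{PL}})$ bounds the length of the gradient curve by $2\sqrt{C_{PL}V(x)}$, and squaring gives exactly the factor $\frac{1}{4C_{PL}}$. This length bound is precisely what the Fact alone does not provide, since exponential decay of $V(y_s)$ says nothing about how far $y_s$ travels.

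A few minor points.

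If $\nabla V$ is only continuous, you need neither Ekeland nor an approximation argument. Peano's theorem gives a $C^1$ solution of $\dot y=-\nabla V(y)$, and the identity $\frac{\dd}{\dd s}V(y_s)=-\lVert\nabla V(y_s)\rVert^2$ holds along any such solution because $V$ is $C^1$. Your length bound then keeps the solution in $\bar B(x,2\sqrt{C_{PL}V(x)})$, so it cannot blow up, and uniqueness is never used. By contrast, approximating $V$ would be awkward, since neither the PŁ constant nor $\mathrm{argmin}V$ is stable under mollification.

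Your justification that $\mathrm{argmin}V\neq\emptyset$ does not work as written: knowing that critical points are global minimizers does not produce one. It is also unnecessary. The statement presupposes $\min V$, and if you run your argument with $\inf V$ in place of $\min V$, you obtain a limit $y_\infty$ with $V(y_\infty)=\inf V$, so existence of a minimizer comes out as a by-product.

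Under your locally Lipschitz standing assumption, $V(y_s)$ cannot reach $0$ at a finite time. The constant curve at a critical point solves the ODE, so backward uniqueness would force $x\in\mathrm{argmin}V$. That sentence is therefore harmless but vacuous.

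An Ekeland or strong-slope error-bound argument would buy you the result for lower semicontinuous, nonsmooth $V$. The paper does not need that generality.
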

This quadratic growth condition makes square distance functions a fundamental example among PŁ functions as in the smooth case, a PŁ function is bounded below and above, up to a a multiplicative constant, by the square distance function to the set of its minimizers. Remarkably, the log-Sobolev inequality can itself be interpreted as a Polyak-Łojasiewicz inequality in the Wasserstein metric for the relative entropy functional, see \cite{BLANCHET20181650} for a more detailed exposition.

\vspace{5pt}
Łojasiewicz inequalities \cite{lojasiewicz1963propriete} are a generalization of the PŁ condition. A function $f$ is said to satisfy a Łojasiewicz inequality if $f-\min f \gtrsim \mathrm{dist}(\cdot, \mathrm{argmin}f)^\alpha$ where $\alpha=2$ corresponds to the Polyak-Łojasiewicz case. Many functions, including all analytic and subanalytic functions, satisfy, at least near their minimizers, Łojasiewicz-type inequalities.

\vspace{5pt}
Additionally, in order to state Theorem \ref{thm1}, since  $\mu_t$ is a probability measure for $t>0$, we need that $e^{-\frac{V}{t}} \in L^1$ and $\om$ must have a finite Lebesgue measure. The potential $V$ being convex, $\om$ is also convex. Accordingly, $\om$ has to be bounded.

\section{Low-temperature asymptotics of Poincaré constants}
In this section, we disprove Conjecture \ref{conj} and we prove a precise version of Theorem \ref{thm1}.

\subsection{A refutation of Conjecture \ref{conj}}\label{refutation}
The counterexample that we provideis elementary and one-dimensional, yet it gives the asymptotic behavior that we believe to be correct (see Section \ref{LSIasympt}).
\paragraph{General heuristics.} 
Having in mind Theorem \ref{thm1} for the Poincaré constant and the following heuristics, in addition to other arguments that we leave to future work, it is natural to expect the asymptotics of $\small{\dfrac{C_{\mathrm{LS}}(\mu_t)-C_{\mathrm{LS}}(\mu_0)}{\sqrt{t}}}$ to be of order $\sqrt{t}$, when the potential $V$ is Polyak-Łojasiewicz. In fact, given a regular function $g \in C^{\infty}_c(\mathbb{R}^d)$, if $V$ has a bounded domain of minimizers,
then, denoting by $\varphi$ the function $x\mapsto x\log x$, the quantity $$\dfrac{\frac{1}{Z_t}\di \varphi(g^2)e^\frac{-V}{t}-\varphi\bigg(\frac{1}{Z_t} \di g^2e^\frac{-V}{t}\bigg)}{\frac{1}{Z_t}\di \lVert \nabla g \rVert^2e^\frac{-V}{t} }$$
can be easily estimated using Laplace's method, and is equal, for a generic function $g$, to $A+B\sqrt{t}+o(\sqrt{t})$ where $A$ and $B$ are constants that depend on the values of $g$ inside $\mathrm{argmin}(\om)$ and on $\po$.
Consequently, if we had any potentially quantitative $\Gamma$-convergence or any argument that allows to compute the quantity above for a single function $g$ (not depending on $t$) that would achieve equality in the log-Sobolev inequality satisfied by $\mu_0$, and letting $t \to 0$, we may recover the expansion of $C_{\mathrm{LS}}(\mu_t)$ as $C_{\mathrm{LS}}(\mu_0)+\alpha\sqrt{t}+o(\sqrt{t})$ using Laplace's method. Of course, optimal functions for the log-Sobolev inequality do not always exist, and despite $\Gamma$-convergence results existing \cite{Mariani2018gamma} in a more general setting, they are not enough to derive the desired conclusion. Yet, this heuristic suggests the asymptotics above.

\vspace{10pt}
In dimension $1$, the exact Poincaré and LSI constants for uniform measures on bounded intervals are known, in addition to optimal functions in the Poincaré inequality. It is also natural to begin with a square distance function as a PŁ potential. In this context, the proposition below shows that $C_{\mathrm{LS}}(\mu_t)-C_{\mathrm{LS}}(\mu_0)$ can be of order at least $\sqrt{t}$ (Theorem \ref{lsi1d} shows that it is exactly of order $\sqrt{t}$).
\begin{prop}\label{counterexple}
Let $f \colon  x \mapsto \frac{1}{2}\mathrm{dist}(x,[-\frac{\pi}{2},\frac{\pi}{2}])^2$, $\mu_t$ be the probability measure whose density is $\dfrac{1}{Z_t}e^{-\frac{f}{t}}$ with $Z_t$ enforcing unit mass and $\mu_0$ be the uniform measure on $[-\frac{\pi}{2},\frac{\pi}{2}]$. It holds that

\begin{equation}
C_{\mathrm{LS}}(\mu_t) \geq C_{\mathrm{LS}}(\mu_0) +\sqrt{\frac{8t}{\pi}} +o(\sqrt{t}).
\nonumber
\end{equation}
\end{prop}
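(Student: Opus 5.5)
We get the lower bound by plugging a single test function, independent of $t$, into the definition of $C_{\mathrm{LS}}(\mu_t)$, and using the linearization $C_{\mathrm{LS}}\ge C_{\mathrm{P}}$. By Proposition \ref{segment}, $C_{\mathrm{LS}}(\mu_0)=C_{\mathrm{P}}(\mu_0)=\pi^2/\pi^2=1$. Since $C_{\mathrm{LS}}(\mu_t)\ge C_{\mathrm{P}}(\mu_t)$, it suffices to show
$$C_{\mathrm{P}}(\mu_t)\ \ge\ 1+\sqrt{8t/\pi}+o(\sqrt t).$$
For this we need a function $g$ whose Rayleigh quotient $\mathrm{Var}_{\mu_t}(g)/\int g'^2\,\dd\mu_t$ has exactly this expansion.

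The natural candidate is the Neumann eigenfunction of $[-\frac{\pi}{2},\frac{\pi}{2}]$, namely $\sin x$, extended outside the interval by its boundary value. So we take $g(x)=\sin x$ on $[-\frac\pi2,\frac\pi2]$ and $g=\pm1$ outside. This $g$ is odd and Lipschitz, and a standard approximation makes it admissible.

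By symmetry $\int g\,\dd\mu_t=0$, so
$$\mathrm{Var}_{\mu_t}(g)=\frac1{Z_t}\Big(\int_{-\pi/2}^{\pi/2}\sin^2x\,\dd x+2\int_0^\infty e^{-s^2/(2t)}\,\dd s\Big)=\frac1{Z_t}\Big(\frac\pi2+\sqrt{2\pi t}\Big),$$
while $\int g'^2\,\dd\mu_t=\frac1{Z_t}\int\cos^2=\frac{1}{Z_t}\frac\pi2$, because $g'=0$ outside. The ratio is therefore
$$1+\frac{2\sqrt{2\pi t}}{\pi}=1+\sqrt{\frac{8t}{\pi}},$$
exactly, with no error term. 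Dividing by $Z_t$ needs no care because $Z_t$ cancels.

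The only genuine point to verify is that $g$, which is merely Lipschitz and not compactly supported, is allowed in the Poincaré inequality defining $C_{\mathrm{P}}(\mu_t)$. I would handle this by truncating with cutoffs $\chi_R$ and mollifying. Since $g$ is bounded and $\mu_t$ has Gaussian tails, dominated convergence gives convergence of both the variance and the Dirichlet energy. The gradient of the cutoff contributes $O(1/R^2)$ times a $\mu_t$-mass that tends to $0$.

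Consequently $C_{\mathrm{LS}}(\mu_t)\ge C_{\mathrm{P}}(\mu_t)\ge 1+\sqrt{8t/\pi}$ for every $t>0$. This is even stronger than the stated asymptotic bound, and it disproves Conjecture \ref{conj}: the PŁ constant is finite, yet $(C_{\mathrm{LS}}(\mu_t)-C_{\mathrm{LS}}(\mu_0))/t\to\infty$.
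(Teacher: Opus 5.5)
Your proof is correct and follows the paper's route. You test the Poincaré inequality for $\mu_t$ with the sine eigenfunction, then use $C_{\mathrm{LS}}(\mu_t)\ge C_{\mathrm{P}}(\mu_t)$ and $C_{\mathrm{LS}}(\mu_0)=C_{\mathrm{P}}(\mu_0)=1$. The only difference is the extension: the paper uses $\sin$ on all of $\mathbb{R}$ with Laplace's method, hence its $o(\sqrt{t})$ terms. Your extension by $\pm1$ is in fact $C^1$, since $\cos(\pm\frac{\pi}{2})=0$, which makes the computation exact and gives $C_{\mathrm{LS}}(\mu_t)\ge 1+\sqrt{8t/\pi}$ for every $t>0$.
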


\begin{proof}

Laplace's method yields for any continuous function $g$ having reasonable growth that
$$ \displaystyle\int_\mathbb{R} g(x) e^{\frac{-f(x)}{t}}\,\dd x = \displaystyle\int_{-\frac{\pi}{2}}^\frac{\pi}{2} g(x)\,\dd x +\big(g(\frac{\pi}{2})+g(-\frac{\pi}{2})\big)\sqrt{\frac{\pi t}{2}} + o(\sqrt{t}).$$
Therefore,
$$ \displaystyle\int_\mathbb{R} \sin^2(x) e^{\frac{-f(x)}{t}}\,\dd x = \frac{\pi}{2} +\sqrt{2\pi t} + o(\sqrt{t}),\quad \displaystyle\int_\mathbb{R} \cos^2(x) e^{\frac{-f(x)}{t}}\,\dd x = \frac{\pi}{2}+ o(\sqrt{t}).$$
By definition of the Poincaré constant, 
    $\displaystyle\int \sin^2(x)\,\dd \mu_t(x) \leq C_{\mathrm{P}}(\mu_t)\displaystyle\int \cos^2(x)\,\dd \mu_t(x).$
Thus,
$$C_{\mathrm{P}}(\mu_t) \geq 1 + \sqrt{\frac{8t}{\pi}} +o(\sqrt{t}) .$$
Finally, recall that $C_{\mathrm{LS}}(\mu_0) = C_{\mathrm{P}}(\mu_0) =1$ (Proposition \ref{segment}) and that $C_{\mathrm{LS}}(\mu_t) \geq C_{\mathrm{P}}(\mu_t)$ for all $t\geq 0$, hence
\[C_{\mathrm{LS}}(\mu_t)\geq C_{\mathrm{P}}(\mu_t) \geq 1 +\sqrt{\frac{8t}{\pi}} +o(\sqrt{t}) = C_{\mathrm{LS}}(\mu_0) +\sqrt{\frac{8t}{\pi}}+o(\sqrt{t}).\]
\end{proof}

\begin{remark}
\normalfont
Although Proposition \ref{counterexple} is based on the non generic fact that is $C_{\mathrm{P}}=C_{\mathrm{LS}}$ for uniform measures on intervals, the ideas presented earlier still suggest that the lower bound of order $\sqrt{t}$ remains true in higher dimensions and in a more general setting.
\end{remark}

We denote by $C_{\mathrm{P}}(t) \in [0,+\infty]$ the optimal constant in the Poincaré inequality satisfied by $\mu_t=\frac{1}{Z_t}e^\frac{-V}{t}$, where $V$ satisfies Assumption \ref{assumptions}.
\newcommand{\pp}{\frac{\pi}{2}}

Before detailing the full proof, we shall begin with a one-dimensional special case. Despite being somewhat restrictive, it captures the main ideas of the general proof while avoiding additional complications such as regularity issues that arise in higher dimensions.

\subsection{The one-dimensional case}\label{dim1poincaré}
In this subsection, we restrict our study to a one-dimensional special case. Up to rescaling and centering $\Omega$ (which will be a bounded interval in our case), we assume that

\begin{equation} 
  V(x) = \begin{cases}
 0  & \text{for $x \in [-\pp,\pp]$} \\
 \frac{\alpha}{2}(x-\pp)^2 & \text{for $x > \pp$} \\
 \frac{\beta}{2}(x+\pp)^2 & \text{for $x < -\pp$}
\end{cases}
\nonumber
\end{equation}

for some $\alpha, \beta >0$.

\begin{remark}
\normalfont
    The Lyapunov method applied in \cite{chewi2024ballisticlimitlogsobolevconstant} in addition to Holley-Stroock-type stability results \cite{bakry2014analysis} suggest that only the values of $V$ near $\mathrm{argmin}~V$ matter for the sought asymptotics. Therefore, applying this machinery to the example above should be sufficient to directly derive the result of Theorem \ref{1d} for all one-dimensional potentials satisfying Assumption \ref{assumptions} with $\alpha=2$.
    \end{remark}

\begin{theorem}\label{1d}
    Under the previous assumptions, the Poincaré constant satisfies $$C_{\mathrm{P}}(t)=1+\Big(\frac{1}{\sqrt{\alpha}}+\frac{1}{\sqrt{\beta}}\Big)\sqrt{\frac{2t}{\pi}} +o(\sqrt{t}).$$
\end{theorem}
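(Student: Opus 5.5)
The proof has two halves: a lower bound from a test function, and a matching upper bound that goes through the eigenvalue equation.

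\emph{Lower bound.} I will rerun the argument of Proposition \ref{counterexple} with the test function $\sin x$, which has zero $\mu_t$-mean by oddness when $\alpha=\beta$. For general $\alpha,\beta$ I center it, using $\sin x - c_t$ with $c_t=O(\sqrt t)$. Laplace's method near each endpoint gives the contributions $\sqrt{\pi t/(2\alpha)}$ and $\sqrt{\pi t/(2\beta)}$, and the centering correction costs only $O(t)$ in the variance. The resulting expansions are
$$\int \sin^2 e^{-V/t}=\tfrac{\pi}{2}+\sqrt{\tfrac{\pi t}{2}}\Big(\tfrac{1}{\sqrt\alpha}+\tfrac{1}{\sqrt\beta}\Big)+o(\sqrt t),\qquad \int \cos^2 e^{-V/t}=\tfrac{\pi}{2}+o(\sqrt t).$$
Their ratio gives
$$C_{\mathrm P}(t)\geq 1+\Big(\tfrac{1}{\sqrt\alpha}+\tfrac{1}{\sqrt\beta}\Big)\sqrt{\tfrac{2t}{\pi}}+o(\sqrt t).$$

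\emph{Upper bound.} I will work with the spectral formulation. Let $f_t$ be a normalized eigenfunction for $\lambda_1(t)=1/C_{\mathrm P}(t)$, so that $f_t''-\tfrac{V'}{t}f_t'=-\lambda_1 f_t$. This eigenfunction exists because the spectrum is discrete: $V/t$ is quadratic at infinity. On $[-\tfrac{\pi}{2},\tfrac{\pi}{2}]$ we have $V=0$, so $f_t=A\cos(\sqrt{\lambda_1}x)+B\sin(\sqrt{\lambda_1}x)$. On $x>\tfrac{\pi}{2}$, substituting $y=(x-\tfrac{\pi}{2})\sqrt{\alpha/t}$ turns the equation into $g''-yg'+\tfrac{\lambda_1 t}{\alpha}g=0$, an Ornstein--Uhlenbeck (Hermite-type) equation with small parameter $\lambda_1 t/\alpha$. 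The solution lying in $L^2(e^{-y^2/2})$ is close to a constant. For that solution I want the logarithmic derivative at the boundary:
$$\frac{f_t'(\tfrac{\pi}{2}^+)}{f_t(\tfrac{\pi}{2})}=-\lambda_1\sqrt{\tfrac{t}{\alpha}}\sqrt{\tfrac{\pi}{2}}\,(1+o(1)).$$
To see this, integrate the ODE against the weight $e^{-y^2/2}$: this gives $g'(0)=-\tfrac{\lambda_1 t}{\alpha}\int_0^\infty g\,e^{-y^2/2}\,dy$, and $g\approx g(0)$ on the effective support. The same computation holds at $-\tfrac{\pi}{2}$ with $\beta$ in place of $\alpha$. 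Since $V\in C^1$, $f_t\in C^1$ across the endpoints, so this becomes a Robin boundary condition for the cosine/sine solution on the interval. This turns the problem into a $2\times 2$ determinant condition on $\sqrt{\lambda_1}$. Because $\lambda_1(0)=1$ is simple with eigenfunction $\sin$, a perturbation of that condition gives
$$\lambda_1=1-\Big(\tfrac{1}{\sqrt\alpha}+\tfrac{1}{\sqrt\beta}\Big)\sqrt{\tfrac{2t}{\pi}}+o(\sqrt t).$$
Inverting yields the claim. I also need to identify $f_t$ as the perturbation of $\sin$ rather than of a higher mode. For this I use the lower bound already proved, $\lambda_1(t)\le 1+o(1)$, together with continuity of the branches of the determinant equation, which rule out roots near $0$ other than the trivial one.

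\emph{Main obstacle.} The hardest step is making the Robin-condition approximation rigorous with an $o(1)$ error that is uniform in $\lambda_1$ over a compact set. This needs control of the decaying solution of the Hermite-type equation: I would use the integral identity above plus a maximum-principle or Grönwall bound giving $|g-g(0)|=O(\lambda_1 t)$ on $y\le t^{-1/4}$. As a fallback I would write the solution explicitly via parabolic cylinder functions, for which small-parameter expansions are classical.
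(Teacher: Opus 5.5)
Your lower bound is essentially the paper's: the test function $\sin$ with Laplace's method at $\pm\tfrac{\pi}{2}$. Your upper bound takes a genuinely different, ODE-based route. It works, but only after you fix a sign.

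\textbf{The sign slip.} Integrating $(g'e^{-y^2/2})'=-\epsilon\, g\,e^{-y^2/2}$, with $\epsilon=\lambda_1 t/\alpha$, over $[0,\infty)$ gives $g'(0)=+\epsilon\int_0^\infty g\,e^{-y^2/2}\,\mathrm{d}y$. The correct conditions are therefore $f_t'(\tfrac{\pi}{2})=\kappa_r f_t(\tfrac{\pi}{2})$ and $f_t'(-\tfrac{\pi}{2})=-\kappa_l f_t(-\tfrac{\pi}{2})$, with $\kappa_r=\lambda_1\sqrt{\pi t/(2\alpha)}\,(1+o(1))>0$ and similarly for $\kappa_l$. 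In other words, the eigenfunction is still increasing when it leaves the interval. With your sign, the perturbation would give $\lambda_1>1$, which contradicts your own lower bound.

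\textbf{The corrected argument.} Writing $k=\sqrt{\lambda}$, the determinant condition becomes
\[
(\kappa_r\kappa_l-k^2)\sin(k\pi)-k(\kappa_r+\kappa_l)\cos(k\pi)=0 .
\]
Near $k=1$ this gives $k=1-(\kappa_r+\kappa_l)/\pi+O(t)$. That yields the claimed expansion, and in fact with an $O(t)$ remainder.

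\textbf{Your ``main obstacle''.} It is easier than you fear. Normalize $g(0)=1$. Integration by parts gives
\[
\int_0^\infty g'^2e^{-y^2/2}=\epsilon\Big(\int_0^\infty g^2e^{-y^2/2}-\int_0^\infty g\,e^{-y^2/2}\Big).
\]
Apply the Gaussian Poincaré inequality to the odd reflection of $g-1$; this is exactly the device of Lemma~\ref{magic}. Combined with Cauchy--Schwarz, it yields $\int_0^\infty g\,e^{-y^2/2}\in\big[\sqrt{\pi/2},\sqrt{\pi/2}/(1-\epsilon)\big]$, uniformly for $\lambda$ in compact sets. So no parabolic cylinder asymptotics are needed. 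Your pointwise bound $|g-g(0)|=O(\lambda_1t)$ on $y\le t^{-1/4}$ is off by a factor $\log(1/t)$, since $g$ grows like $y^{\epsilon}$, but this is harmless.

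\textbf{Comparison with the paper.} The paper never writes down the eigenfunction. It uses Lemma~\ref{magic} to replace a near-optimizer by its constant extension outside the interval. This reduces the problem to a quotient on $[-\tfrac{\pi}{2},\tfrac{\pi}{2}]$ with point masses $\sqrt{\pi t/(2\alpha)}$ and $\sqrt{\pi t/(2\beta)}$ at the endpoints. It then concludes by $H^1$-compactness and uniqueness of the first Neumann eigenfunction. Your (corrected) conditions $f'=\pm\lambda m f$ are exactly the Euler--Lagrange boundary conditions of the variance version of that reduced problem. This is the computation the paper's remark after the proof calls ``tedious but feasible'', and you apply it directly to the true eigenfunction.

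Your route is more explicit and gives a sharper remainder. However, it relies on $V\equiv 0$ on the interval (explicit trigonometric solutions) and on one-dimensionality. The paper's soft argument yields only $o(\sqrt t)$, but it is precisely the template that carries over to Theorem~\ref{mainthm} in higher dimensions.
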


The key argument to prove Theorem \ref{1d} is to show that we can replace functions (almost) achieving equality in the Poincaré inequality satisfied by $\mu_t$ with functions (almost) achieving equality in the Poincaré inequality satisfied by $\mu_0$ without significantly changing the ratio $\frac{\mathrm{Var}_{\mu_t}(.)}{\int \lvert \nabla .\rvert ^2\,\dd \mu_t}$ (up to slightly modifying the measure $\mu_t)$.

Before beginning the proof, we establish the following inequality.
\begin{lemma}\label{magic}
    Let $\varepsilon>0$. There exists a constant $C_\varepsilon>0$ such that the following inequality holds for all $g \in C^1(\mathbb{R}_+)$
    $$ \bigg\lvert \displaystyle\int_0^{+\infty}g(x)^2e^{-\frac{x^2}{2t}}\,\dd x-\sqrt{\frac{\pi t}{2}}g(0)^2 \bigg\rvert\leq \varepsilon \sqrt{t}g(0)^2+C_\varepsilon t\displaystyle\int_0^{+\infty} g'(x)^2 e^{-\frac{x^2}{2t}}\,\dd x$$
\end{lemma}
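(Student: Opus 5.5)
We rescale so that the Gaussian weight has unit width. Setting $x=\sqrt{t}\,y$ and $h(y)=g(\sqrt t\,y)$, we get $\int_0^\infty g^2e^{-x^2/2t}\,\dd x=\sqrt t\int_0^\infty h^2e^{-y^2/2}\,\dd y$, $t\int_0^\infty g'^2e^{-x^2/2t}\,\dd x=\sqrt t\int_0^\infty h'^2e^{-y^2/2}\,\dd y$, and $g(0)=h(0)$. Every term carries the same factor $\sqrt t$, so it suffices to prove the $t$-free inequality
$$\Big\lvert \int_0^\infty h^2\,\dd\gamma - h(0)^2\Big\rvert \le \varepsilon' h(0)^2 + C_\varepsilon\int_0^\infty h'^2\,\dd\gamma, \qquad \dd\gamma = \sqrt{\tfrac{2}{\pi}}\,e^{-y^2/2}\,\dd y,$$
where $\gamma$ is the half-Gaussian probability measure on $\mathbb{R}_+$ and $\varepsilon'=\varepsilon\sqrt{2/\pi}$. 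By density we may assume $\int h'^2\,\dd\gamma<\infty$, since otherwise there is nothing to prove.

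Write $h(y)=h(0)+k(y)$ with $k(0)=0$ and $k'=h'$. Then
$$\int h^2\,\dd\gamma-h(0)^2 = 2h(0)\int k\,\dd\gamma+\int k^2\,\dd\gamma.$$
The key estimate is a weighted Hardy-type inequality: for $k(0)=0$,
$$\int_0^\infty k^2\,\dd\gamma\le K\int_0^\infty k'^2\,\dd\gamma.$$
One route is Cauchy--Schwarz, $k(y)^2\le \big(\int_0^y k'^2e^{-s^2/2}\,\dd s\big)\big(\int_0^y e^{s^2/2}\,\dd s\big)$, followed by Fubini. This reduces the claim to the Muckenhoupt criterion: $\sup_r \big(\int_r^\infty e^{-y^2/2}\,\dd y\big)\big(\int_0^r e^{s^2/2}\,\dd s\big)<\infty$. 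That supremum is finite, because for large $r$ both factors are of order $e^{\mp r^2/2}/r$, so the product behaves like $1/r^2$.

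Alternatively, one can use the Gaussian Poincaré inequality on the half-line, which is log-concave with constant $1$ by Proposition \ref{be}, together with $k(0)=0$ to control the mean. In either case, once $\int k^2\,\dd\gamma\le K\int h'^2\,\dd\gamma$ holds, Cauchy--Schwarz gives
$$\Big\lvert\int k\,\dd\gamma\Big\rvert\le \Big(\int k^2\,\dd\gamma\Big)^{1/2} \le \sqrt{K}\,\Big(\int h'^2\,\dd\gamma\Big)^{1/2}.$$
Young's inequality then bounds the cross term:
$$2\lvert h(0)\rvert\,\Big\lvert\int k\,\dd\gamma\Big\rvert\le \varepsilon' h(0)^2+\frac{K}{\varepsilon'}\int h'^2\,\dd\gamma.$$
Adding $\int k^2\,\dd\gamma\le K\int h'^2\,\dd\gamma$ gives the claim with $C_\varepsilon = K(1+1/\varepsilon')$ after rescaling constants; note that $C_\varepsilon$ blows up as $\varepsilon\to 0$.

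The only nontrivial step is the weighted Hardy inequality with a uniform constant $K$. I would settle it via the explicit Muckenhoupt bound above, checking the asymptotics of the two Gaussian-type integrals. Everything else is scaling, Cauchy--Schwarz and Young.
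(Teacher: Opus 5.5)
Your architecture matches the paper's. You write $g=g(0)+k$ with $k(0)=0$, absorb the cross term by Young's inequality, and bound $\int_0^\infty k^2e^{-y^2/2}$ by $\int_0^\infty (k')^2e^{-y^2/2}$. The scaling, Cauchy--Schwarz and Young steps are fine. The problem is how you justify that last inequality.

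Your first route fails as written. Plain Cauchy--Schwarz gives $k(y)^2\le\bigl(\int_0^y (k')^2e^{-s^2/2}\,\mathrm{d}s\bigr)V(y)$ with $V(y)=\int_0^y e^{s^2/2}\,\mathrm{d}s$. Fubini then gives
\[
\int_0^\infty k^2e^{-y^2/2}\,\mathrm{d}y\le\int_0^\infty k'(s)^2e^{-s^2/2}\Bigl(\int_s^\infty V(y)e^{-y^2/2}\,\mathrm{d}y\Bigr)\mathrm{d}s .
\]
The inner integral is $+\infty$ for every $s$, because $V(y)e^{-y^2/2}\sim 1/y$. So this computation does not reduce to Muckenhoupt's condition $\sup_r V(r)\int_r^\infty e^{-y^2/2}\,\mathrm{d}y<\infty$.

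That condition is strictly weaker than the one plain Cauchy--Schwarz needs, and its sufficiency is a real theorem. Its proof uses a weighted Cauchy--Schwarz, splitting $\lvert k'\rvert=\lvert k'\rvert e^{-s^2/4}V^{1/4}\cdot e^{s^2/4}V^{-1/4}$ and then integrating by parts. This yields $K\le 4\sup_r V(r)\int_r^\infty e^{-y^2/2}\,\mathrm{d}y$. Your verification of the condition is correct, so quoting Muckenhoupt's theorem does close the step; the derivation you sketch does not.

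Your second route is incomplete. The Poincaré inequality for the half-Gaussian only controls $\int k^2\,\mathrm{d}\gamma-\bigl(\int k\,\mathrm{d}\gamma\bigr)^2$. Bounding $\bigl(\int k\,\mathrm{d}\gamma\bigr)^2$ using $k(0)=0$ is itself a Hardy- or trace-type estimate, and you do not supply it. It can be done: write $\int k\,\mathrm{d}\gamma=\int_0^\infty k'(s)\,\gamma([s,\infty))\,\mathrm{d}s$, apply Cauchy--Schwarz, and use the Mills-ratio bound $\gamma([s,\infty))\lesssim e^{-s^2/2}/s$ for $s\ge 1$.

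The paper avoids all of this with a reflection. Extend $k$ to an odd function $\tilde k$ on $\mathbb{R}$. Because $k(0)=0$, $\tilde k$ is continuous, in fact $C^1$. It has mean zero under the standard Gaussian by symmetry, and $\tilde k^2$ and $(\tilde k')^2$ are even. The Gaussian Poincaré inequality on $\mathbb{R}$ then gives directly $\int_0^\infty k^2e^{-y^2/2}\,\mathrm{d}y\le\int_0^\infty (k')^2e^{-y^2/2}\,\mathrm{d}y$, that is $K=1$, with no asymptotic analysis of Gaussian-type integrals. Substituting this for your Hardy step gives a complete proof.
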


\begin{proof}
We shall prove the upper bound.
    Let $h=g-g(0)$ so that $h(0)=0$ and
\begin{equation}
\begin{split}
\displaystyle\int_0^{+\infty}g(x)^2e^{-\frac{x^2}{2t}}\,\dd x &=
\sqrt{\frac{\pi t}{2}} g(0)^2 + 2\displaystyle\int_0^{+\infty}g(0)h(x)e^{-\frac{x^2}{2t}}\,\dd x +\displaystyle\int_0^{+\infty}h(x)^2e^{-\frac{x^2}{2t}}\,\dd x \\
\end{split}
\nonumber
\end{equation}

Moreover,  $$2\displaystyle\int_0^{+\infty}g(0)h(x)e^{-\frac{x^2}{2t}}\,\dd x \leq  \displaystyle\int_0^{+\infty}\varepsilon g(0)^2e^{-\frac{x^2}{2t}}+ \varepsilon^{-1} h(x)^2e^{-\frac{x^2}{2t}}\,\dd x.$$

Therefore, 

\begin{equation}
\begin{split}
\displaystyle\int_0^{+\infty}g(x)^2e^{-\frac{x^2}{2t}}\,\dd x &\leq 
\sqrt{\frac{\pi t}{2} (1+\varepsilon)} g(0)^2  + (\varepsilon^{-1} +1)\displaystyle\int_0^{+\infty}h(x)^2e^{-\frac{x^2}{2t}}\,\dd x \\
&\leq \sqrt{\frac{\pi t}{2} (1+\varepsilon)} g(0)^2  + (\varepsilon^{-1} +1)\frac{t}{2}\displaystyle\int_0^{+\infty}h'(x)^2e^{-\frac{x^2}{2t}}\,\dd x\\
& = \sqrt{\frac{\pi t}{2} (1+\varepsilon)} g(0)^2  + (\varepsilon^{-1} +1)\frac{t}{2}\displaystyle\int_0^{+\infty}g'(x)^2e^{-\frac{x^2}{2t}}\,\dd x
\end{split}
\nonumber
\end{equation}
which is indeed what we wanted to prove, and where in the last inequality we applied the Gaussian Poincaré inequality to the function $\tilde{h}$ extending $h$ to $\mathbb{R}_-$ with $\tilde{h}(-x)=-h(-x)$ for $x \leq 0$.

The lower bound is obtained in the same way.
\end{proof}

\begin{remark}\label{generalizationmeancontrol}
    \normalfont The proof above shows that if $V$ is a symmetric potential and the measure $\eta= e^{-V}$ satisfies a Poincaré inequality with constant $C_{\mathrm{P}}(\eta)$, then the following inequality holds for all compactly supported smooth functions $g$ and all $\varepsilon >0$: $$\Bigg\lvert\di_0^{+\infty} (g^2-g(0)^2)e^{-V}\Bigg\rvert \leq \varepsilon \eta(\mathbb{R}_+)g(0)^2+(1+\dfrac{1}{\varepsilon})C_{\mathrm{P}}(\eta)\di_0^{+\infty}\lvert g'\rvert^2 e^{-V}.$$
\end{remark}

\begin{proofof}{Theorem \ref{1d}}

We divide the proof in 2 steps.
 For $t>0$ we denote by $\nu_t$ the probability measure defined by
    $$\di f \,\dd\nu_t = \frac{1}{Z_t}\Bigg( \di_{-\pp}^\pp f(x)\,\dd x+f(\pp)\sqrt{\pp\frac{t}{\alpha}}+f(-\pp)\sqrt{\pp\frac{t}{\beta}}~\Bigg)$$
for all continuous compactly supported functions $f \colon  \mathbb{R} \to \mathbb{R}$. Alternatively, $\nu_t$ is the push-forward measure of $\mu_t$ by the function $\mathrm{proj}_{[-\pp,\pp]}$, that is the usual projection on the convex set $[-\pp,\pp]$. We introduce 
\begin{equation}\label{sup}
C_t = \underset{g}{\sup} \dfrac{\di g^2 \,\dd\nu_t}{\frac{1}{Z_t}\displaystyle\int^\pp_{-\pp} g'(x)^2 \,\dd x} = \underset{g}{\sup} \dfrac{\di^\pp_{-\pp} g^2(x) \,\dd x+
\sqrt{\frac{\pi t}{2}
}\bigg(\frac{g(\pp)^2}{\sqrt{\alpha}} + \frac{g(-\pp)^2}{\sqrt\beta}\bigg)}{\displaystyle\int^\pp_{-\pp} g'(x)^2 \,\dd x}
\end{equation}
where the supremum is taken over all functions $g \in C^1([-\pp, \pp])$ such that $\di_{-\pp}^\pp g(x)\,\dd x=0$.

We will be first showing that $C_t=C_{\mathrm{P}}(t)+o(\sqrt{t})$ and then an estimate for $C_t$ can be obtained.

\vspace{10pt}
\noindent\textbf{Step 1.} \textit{$C_t=C_{\mathrm{P}}(t)+o(\sqrt{t})$.}

\vspace{5pt}
Clearly, $C_t \leq C_{\mathrm{P}}(t) +o(\sqrt{t})$. In fact, take $f \in C^1({[-\pp,\pp]})$ such that $\di_{-\pp}^\pp f(x)\,\dd x=0.$
We denote by $\tilde{f}$ the piecewise $C^1$ function extending $f$ to $\mathbb{R}$ such that $\tilde{f}$ is constant on $[\pp,+\infty)$ and $(-\infty,-\pp]$.
By the Poincaré inequality satisfied by $\mu_t$, 
\begin{equation}
\begin{split}
    \di f^2\,\dd\nu_t &= \di \tilde{f}^2\,\dd \mu_t
    \leq \bigg(\di \tilde{f}\,\dd \mu_t\bigg)^2 + C_{\mathrm{P}}(t)\frac{1}{Z_t}\di^\pp_{-\pp} f'(x)^2 \,\dd x \\
    & \leq  O(t)\di^\pp_{-\pp} f'(x)^2 \,\dd x + C_{\mathrm{P}}(t)\frac{1}{Z_t}\di^\pp_{-\pp} f'(x)^2 \,\dd x \text{~~~$\Big($as $\mu_t([-\pp,\pp]^c)=O(\sqrt{t}) \Big)$}
\end{split}
\nonumber
\end{equation}
where in the last inequality we used $\lVert f\rVert_{\infty,[-\pp,\pp]} \leq \sqrt{\pi\di^\pp_{-\pp} f'(x)^2\,\dd x}$ which follows from the Cauchy-Schwarz inequality and the fact that $f$ vanishes somewhere in $[-\pp,\pp]$. This also justifies that $C_t$ is well-defined and finite.\\
\par We now show the converse inequality.
For $t >0$, let $g_t \in C^{\infty}_c(\mathbb{R})$ be such that $$\dfrac{\mathrm{Var}_{\mu_t}(g_t)}{\displaystyle\int g_t'^2\,\dd \mu_t} \geq C_{\mathrm{P}}(t)-t.$$

Up to adding a constant to $g_t$ and multiplying $g_t$ by a positive factor, we may suppose $\displaystyle\int_{[-\pp, \pp]} g_t(x)\,\dd x=0$. By Lemma \ref{magic},

\allowdisplaybreaks
\begin{align*}
&\dfrac{\mathrm{Var}_{\mu_t}(g_t)}{\displaystyle\int g_t'^2\,\dd \mu_t}
\leq \dfrac{\di^\pp_{-\pp} g_t(x)^2\,\dd x+\di_\pp^{+\infty}g_t(x)^2 e^{\frac{-\alpha (x-\pp)^2}{2t}}\,\dd x+\di_{-\infty}^{-\pp}g_t(x)^2 e^{\frac{-\beta (x-\pp)^2}{2t}}\,\dd x}{\di^\pp_{-\pp} g_t'(x)^2\,\dd x + \di_\pp^{+\infty}g_t'(x)^2 e^{\frac{-\alpha (x-\pp)^2}{2t}}\,\dd x+\di_{-\infty}^{-\pp}g_t'(x)^2 e^{\frac{-\beta (x-\pp)^2}{2t}}\,\dd x}\\
&\leq \dfrac{\di^\pp_{-\pp} g_t(x)^2 \,\dd x +(1+\varepsilon)\sqrt{\frac{\pi t}{2}
}\bigg(\frac{g_t(\pp)^2}{\sqrt{\alpha}} + \frac{g_t(-\pp)^2}{\sqrt\beta}\bigg)}{\di^\pp_{-\pp} g_t'(x)^2\,\dd x + \di_\pp^{+\infty}g_t'(x)^2 e^{\frac{-\alpha (x-\pp)^2}{2t}}\,\dd x+\di_{-\infty}^{-\pp}g_t'(x)^2 e^{\frac{-\beta (x-\pp)^2}{2t}}\,\dd x}\\
&+ \dfrac{C_\varepsilon t \di_\pp^{+\infty}g_t'(x)^2 e^{\frac{-\alpha (x-\pp)^2}{2t}}\,\dd x+C_\varepsilon t \di_{-\infty}^{-\pp}g_t(x)^2 e^{\frac{-\beta (x-\pp)^2}{2t}}\,\dd x}{\di^\pp_{-\pp} g_t'(x)^2\,\dd x + \di_\pp^{+\infty}g_t'(x)^2 e^{\frac{-\alpha (x-\pp)^2}{2t}}\,\dd x+\di_{-\infty}^{-\pp}g_t'(x)^2 e^{\frac{-\beta (x-\pp)^2}{2t}}\,\dd x}
\\
&\leq \dfrac{\di^\pp_{-\pp} g_t(x)^2\,\dd x +(1+\varepsilon)\sqrt{\frac{\pi t}{2}
}\bigg(\frac{g_t(\pp)^2}{\sqrt{\alpha}} + \frac{g_t(-\pp)^2}{\sqrt\beta}\bigg)}{\di^\pp_{-\pp} g_t'(x)^2\,\dd x}
\end{align*}
\normalsize for sufficiently small values of $t$.

Note that in the last inequality, we have used the fact that $\frac{a+b}{c+d}\leq \frac{a}{c}$ whenever $a,b,c,d$ are positive real numbers satisfying $\frac{a}{c}\geq\frac{b}{d}$.
Therefore, for $t$ sufficiently small,
$$\dfrac{\mathrm{Var}_{\mu_t}(g_t)}{\displaystyle\int g_t'^2\,\dd \mu_t} \leq \dfrac{\di g_t^2 \,\dd\nu_t}{\frac{1}{Z_t}\displaystyle\int g_t'(x)^2 \,\dd x}+O(\varepsilon\sqrt{t})$$
since $\big(\frac{g_t(\pp)^2}{\sqrt{\alpha}} + \frac{g_t(-\pp)^2}{\sqrt\beta}\big) \big{/} \displaystyle\int^\pp_{-\pp} g_t'(x)^2\,\dd x$ is uniformly bounded as $\di_{-\pp}^\pp g_t(x)\,\dd x=0$.

Letting $\varepsilon \to 0$ yields that $C_{\mathrm{P}}(t) \leq C_t +o(\sqrt{t})$.

\vspace{5pt}
\noindent\textbf{Step 2.} \textit{Estimating $C_t$.}

\vspace{10pt}
Now that we have shown that $C_{\mathrm{P}}(t)=C_t+o(\sqrt{t})$, we can compute the asymptotics of $C_t$ using standard techniques.

From Section \ref{prelim}, we have that $C_0=C_{\mathrm{P}}(0)=1$ and that equality is achieved for the function $g_0=\sin$. We shall start with a lower bound :

\begin{equation}
    \begin{split}
C_t &\geq \dfrac{\di g_0^2 \,\dd\nu_t}{\frac{1}{Z_t}\displaystyle\int^\pp_{-\pp} g_0'(x)^2 \,\dd x} = \dfrac{\di^\pp_{-\pp} \sin(x)^2 \,\dd x+
\sqrt{\frac{\pi t}{2}
}\bigg(\frac{\sin(\pp)^2}{\sqrt{\alpha}} + \frac{\sin(-\pp)^2}{\sqrt\beta}\bigg)}{\displaystyle\int^\pp_{-\pp} \cos(x)^2 \,\dd x}\\
& = 1+ \dfrac{(\frac{1}{\sqrt{\alpha}}+\frac{1}{\sqrt{\beta}})\sqrt{\frac{\pi t}{2}}}{\pp}= 1+\big(\frac{1}{\sqrt{\alpha}}+\frac{1}{\sqrt{\beta}}\big)\sqrt{\frac{2t}{\pi}}.
    \end{split}
    \nonumber
\end{equation}

Let us now prove a similar upper bound.
For $t>0$, let $g_t$ be a function satisfying the same conditions as above, i.e. $\di^\pp_{-\pp}g_t(x)\,\dd x=0$ and
$$C_t+o(\sqrt{t}) = \dfrac{\di g_t^2 \,\dd\nu_t}{\frac{1}{Z_t}\displaystyle\int^\pp_{-\pp} g_t'(x)^2 \,\dd x} $$

By homogeneity, we may suppose that $\displaystyle\int^\pp_{-\pp} g_t'(x)^2\,\dd x=1$.
We can therefore take (up to a subsequence) a weak $H^1$ limit of $g_t$ as $t\to 0$ that we call $u$. By Ascoli theorem, we may suppose without loss of generality that the convergence holds also with respect to the $L^{\infty}$ norm (and hence that $u$ is continuous). Hence
$$1=C_0 \geq \dfrac{\di^\pp_{-\pp}u(x)^2\,\dd x}{\di^\pp_{-\pp}u'(x)^2\,\dd x} \geq \limsup  \dfrac{\di g_t^2 \,\dd\nu_t}{\frac{1}{Z_t}\displaystyle\int^\pp_{-\pp} g_t'(x)^2 \,\dd x}=1$$
Therefore, $\di^\pp_{-\pp}u(x)\,\dd x=0$ and $\di^\pp_{-\pp}u(x)^2 \,\dd x=\di^\pp_{-\pp}u'(x)^2 \,\dd x=1$. By uniqueness of equality cases in \eqref{sup} for $t=0$\footnote{There is, up to a scalar factor and an additive constant, a unique optimal function for Poincaré inequality on a segment, namely $\sin$ when the segment is $[-\pp, \pp]$. In fact, if $u \in H^1([-\pp, \pp])$ achieves equality and $\int_{[-\pp,\pp]}u=0$, then it satisfies an Euler-Lagrange equation $\int_{[-\pp,\pp]} uh = C \int_{[-\pp,\pp]} u'h'$ for all test functions $h \in C^{\infty}_c([-\pp,\pp])$ and therefore, $h$ is a sine function.}, $u= \lambda \sin$ for $\lambda = \pm \dfrac{1}{\pi}$.
It follows that the convergence holds for $t\to 0$ (up to changing the sign of $g_t$ for every $t$) and that
\allowdisplaybreaks
\begin{align*}
    C_t+o(\sqrt{t}) &= \dfrac{\di^\pp_{-\pp} g_t(x)^2\,\dd x}{\di^\pp_{-\pp} g_t'(x)^2\,\dd x}+\dfrac{\sqrt{\frac{\pi t}{2}}\bigg(\frac{g_t(\pp)^2}{\sqrt{\alpha}} + \frac{g_t(-\pp)^2}{\sqrt\beta}\bigg)}{\di^\pp_{-\pp} g_t'(x)^2\,\dd x}\\
    &\leq 1 +\Big(\frac{1}{\sqrt{\alpha}}+\frac{1}{\sqrt{\beta}}\Big)\sqrt{\frac{\pi t}{2}}\Bigg(\dfrac{g_t(\pp)^2+ g_t(-\pp)^2}{\di^\pp_{-\pp} g_t'(x)^2\,\dd x}\Bigg)\\
    &= 1+\Big(\frac{1}{\sqrt{\alpha}}+\frac{1}{\sqrt{\beta}}\Big)\sqrt{\frac{\pi t}{2}}\Bigg(\dfrac{u(\pp)^2+ u(-\pp)^2}{\di^\pp_{-\pp} u'(x)^2\,\dd x}+o(1)\Bigg)\\
    & =  1+\big(\frac{1}{\sqrt{\alpha}}+\frac{1}{\sqrt{\beta}}\big)\sqrt{\frac{2t}{\pi}} +o(\sqrt{t}).
    \end{align*}
which concludes the proof.
\end{proofof}

\begin{remarks}
\normalfont
\begin{enumerate}
\item One can show that $C_t$ (up to a $o(\sqrt{t})$ error) is also equal to $$\underset{g}{\sup} \dfrac{\text{Var}_{\nu_t}(g)}{\frac{1}{Z_t}\displaystyle\int^\pp_{-\pp} g'(x)^2 \,\dd x},$$ but in this case, an Euler-Lagrange equation for an optimal function $u$ such that $\int u \,\dd\nu_t=0$ is (like in the case $t=0$) $\int_{[-\pp,\pp]} uh = C_t \int_{[-\pp,\pp]} u'h'$ for $h \in C^{\infty}_c\big((-\pp,\pp)\big)$. Therefore $u$ is a sine with frequency $\frac{1}{\sqrt{C_t}}$ but it is not necessarily centered and one can perform tedious (but feasible) calculations to re-derive Theorem $\ref{1d}$.

\item Optimal functions always exist by the $H^1$ weak compactness argument presented in the previous proof.
\item Although applicable and relevant in the one-dimensional case, the Ascoli Theorem may not be the right approach : in fact, the right generalization in higher dimensions (as we lose the embedding $H^1 \to C^0$) would be the compactness of the embedding $H^1 \to L^2$ and the existence of a continous trace ``embedding'' $H^1([-\pp,\pp]) \to L^2(\partial [-\pp,\pp])$.

\end{enumerate}
\end{remarks}

\subsection{The general case}
Recall Assumption \ref{assumptions}. We denote by $\oc$ the open domain $\mathbb{R}^n\backslash \bar\om$.
Since $V$ is convex, $\om$ is convex too. The boundary $\po$ being smooth, it follows that $(\theta,r) \in\po \times\mathbb{R}_+ \mapsto \theta+r\nu(\theta)$ is a diffeomorphism from $\po \times\mathbb{R}_+$ onto $\mathbb{R}^n\backslash\om$, where $\nu(\theta)$ is the outer unit normal vector.

\vspace{5pt}
\noindent In the sequel, for all $f \colon  \oc \to \mathbb{R}$, we will denote by $f(r,\theta)$ the image of $\theta+r\nu(\theta)$ under $f$. Besides, for all $\varphi \in C_c(\oc)$, it holds that
$$\dic \varphi(x)\,\dd x=\di_{\theta \in \po}\di_{r\geq 0} \varphi(r,\theta) \kappa(r,\theta)\, \dd rd\theta$$
where $d\theta$ is the surface measure on $\po$ and $\kappa(r,\theta)$ is the Jacobian associated to the change of variables.
An explicit formula for $\kappa(r,\theta)$ is given by $\displaystyle\prod_{i}(1+r\kappa_i)$ where $\kappa_1,\dots,\kappa_{n-1}$ are the principal curvatures of $\po$ at $\theta$. Note that since $\om$ is convex, $\kappa_i \geq 0$ for all $i$. An important consequence of this is that, for all $\theta \in \po$, the map $\kappa(.,\theta)$ is log-concave.

\begin{remark}
    \normalfont
    A possible strategy to study the general case is to adapt the one-dimensional argument of Theorem \ref{1d} in the following way :
    \begin{itemize}
        \item Define 
$$C_t = \underset{g}{\sup} \dfrac{\di_\om g(x)^2\,\dd x+\di_{\oc}g^t(x)^2e^\frac{-V(x)}{t}\,\dd x}{\di_{\om} \lVert\nabla g(x)\rVert^2\,\dd x+\di_{\oc}\lVert\nabla g^t(x)\rVert^2 e^\frac{-V(x)}{t}\,\dd x} $$
where the supremum is taken over all functions $g \in H^1(\om)$ such that $\di_\om g(x)\,\dd x=0$ and where, for $f \in H^1(\Omega)$, $f^t \in H^1(\om,\,\dd \mu_{t | \oc})$ is the unique minimizer of $$h \mapsto \di_{\oc}  \lVert\nabla h(x)\rVert^2 e^\frac{-V(x)}{t}\,\dd x$$ such that $f^t_{|\partial\om}=f_{|\partial\om}$.
\item Show that $C_{\mathrm{P}}(t)=C_t+o(Z_t-\lvert\om\rvert)$ using inequalities generalizing Lemma \ref{magic}.
\item Compute the asymptotics of $C_t$.
    \end{itemize}
This last step is the most difficult because of the term $\di_{\oc}\lVert\nabla g^t(x)\rVert^2 e^\frac{-V(x)}{t}\,\dd x$, which is straightforward to estimate only in dimension $1$. Therefore, we need another, slightly different, approach.
\end{remark}

We begin with estimating $Z_t-\om$.

\begin{lemma}\label{zt}
    Under Assumption \ref{assumptions}, $Z_t-\lvert\om\rvert \sim \gamma t^\frac{1}{\alpha}$ where $\gamma$ is a positive constant.
\end{lemma}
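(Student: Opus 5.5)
Since $V=0$ exactly on $\bar\om$, the plan is to start from the decomposition
$$Z_t-\lvert\om\rvert=\dic e^{-\frac{V(x)}{t}}\,\dd x=\di_{\po}\di_0^{+\infty}e^{-\frac{V(r,\theta)}{t}}\kappa(r,\theta)\,\dd r\,\dd\theta,$$
which uses the normal coordinates introduced above. The expected constant is
$$\gamma=\Gamma\Big(1+\tfrac1\alpha\Big)\di_{\po}a(\theta)^{\frac1\alpha}\,\dd\theta,$$
which is positive because $a\ge 0$ and $a\not\equiv 0$. Heuristically, $V(r,\theta)\approx r^\alpha/a(\theta)$ and $\kappa\approx 1$ near $r=0$. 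The change of variables $r=t^{1/\alpha}s$ then gives
$$\di_0^\infty e^{-s^\alpha/a(\theta)}\,\dd s=a(\theta)^{1/\alpha}\,\Gamma\Big(1+\tfrac1\alpha\Big).$$
When $a(\theta)=0$ the contribution vanishes, consistent with $V/r^\alpha\to+\infty$.

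\textbf{Step 1 (localization).} Fix $\delta>0$ small and split the $r$-integral into $r<\delta$ and $r\ge\delta$. By convexity and since $\argmin V=\bar\om$, there is $c_\delta>0$ such that $V\ge c_\delta$ on $\{\mathrm{dist}(\cdot,\om)=\delta\}$. Convexity along normal rays then gives $V(r,\theta)\ge c_\delta r/\delta$ for $r\ge\delta$. Combined with $\kappa(r,\theta)\le C(1+r)^{n-1}$, this makes the far part $O(e^{-c/t})=o(t^{1/\alpha})$.

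\textbf{Step 2 (near part, rescaling).} On $r<\delta$, substitute $r=t^{1/\alpha}s$. This yields
$$t^{\frac1\alpha}\di_{\po}\di_0^{\delta t^{-1/\alpha}}\exp\!\Big(-\tfrac{s^\alpha}{a(\theta)+o(1)}\Big)\kappa(t^{1/\alpha}s,\theta)\,\dd s\,\dd\theta,$$
where the $o(1)$ is uniform in $\theta$ as $t^{1/\alpha}s\to 0$, which is how Assumption \ref{assumptions} is read. Pointwise in $(s,\theta)$ the integrand converges to $e^{-s^\alpha/a(\theta)}$, interpreted as $0$ when $a(\theta)=0$. Also $\kappa\to1$ uniformly, since the curvatures are bounded on the compact smooth $\po$.

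\textbf{Step 3 (domination).} To apply dominated convergence I need a uniform integrable majorant. Since $a\in L^\infty$, choosing $\delta$ small gives $a(\theta)+o(1)\le \lVert a\rVert_\infty+1$, so the integrand is at most
$$e^{-s^\alpha/(\lVert a\rVert_\infty+1)}\,C(1+\delta)^{n-1},$$
which is integrable on $\po\times\R_+$. Dominated convergence then yields $(Z_t-\lvert\om\rvert)/t^{1/\alpha}\to\gamma$.

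The main obstacle is the uniformity of the $o(1)$ in $\theta$, together with the possible vanishing of $a$. The assumption is stated as $\mathrm{dist}(x,\om)\to0$, which I read as uniform over $\po$. If only pointwise convergence were available, I would instead use the two-sided bounds $V\le C\,\mathrm{dist}^\beta$ and a convexity lower bound to dominate, and Fatou's lemma for the lower bound. The lower bound is where positivity of $\gamma$ matters, and it needs $a>0$ on a set of positive measure, which follows from $a\not\equiv0$ in $L^\infty$.
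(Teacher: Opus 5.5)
Your proof is correct and follows the same route as the paper: you use the normal coordinates $(r,\theta)$, rescale $r=t^{1/\alpha}s$, show the part away from $\partial\Omega$ is exponentially small, and apply dominated convergence, obtaining the same constant $\gamma=\Gamma(1+\frac1\alpha)\int_{\partial\Omega}a^{1/\alpha}\,\dd\theta$ (the paper writes $\int_0^\infty e^{-u^\alpha}\,\dd u$ for the Gamma factor). Two points improve on the paper: a single dominated-convergence step with majorant $e^{-s^\alpha/(\lVert a\rVert_\infty+1)}$, whose pointwise limit is $0$ where $a(\theta)=0$, replaces the paper's split into $\{a>\delta\}$ and $\{a\le\delta\}$ followed by monotone convergence, and your convexity-along-normal-rays bound justifies the exponential smallness of the far part, which the paper only asserts.
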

\begin{proof}
By Assumption \ref{assumptions}, we may write $V(r,\theta)=\dfrac{r^\alpha}{a(\theta)+\eta(r,\theta)}$ where $\eta(r,\theta) \xrightarrow[r \to 0^+]{} 0$ uniformly in $\theta$. Therefore,
    \begin{align*}
        \begin{split}
            Z_t-\lvert \om\rvert 
            &= \di_{\oc} e^\frac{-V}{t}=\di_{\po}\di_{r\geq 0} e^\frac{-V(r,\theta)}{t}\kappa(r,\theta)\,\dd r\dd\theta= \di_{\po}\di_{r\geq 0} e^\frac{-r^\alpha}{t(a(\theta)+\eta(r,\theta))}\kappa(r,\theta)\,\dd r\dd\theta\\
            &=t^\frac{1}{\alpha}\di_{\po}\di_{u\geq 0} e^{-(a(\theta)+\eta(t^\frac{1}{\alpha}u,\theta))^{-1}u^\alpha}\kappa(t^\frac{1}{\alpha}u,\theta) \,\dd u\,\dd\theta ~~~~ (r\coloneqq t^\frac{1}{\alpha}u)\\
            &\geq t^\frac{1}{\alpha}\di_{\po}\di_{u\geq 0}\mathbf{1}_{a(\theta)>\delta} e^{-(a(\theta)+\eta(t^\frac{1}{\alpha}u,\theta))^{-1}u^\alpha}\kappa(t^\frac{1}{\alpha}u,\theta) \,\dd u\,\dd\theta
        \end{split}
    \end{align*}
where $\delta$ is any positive real number. Hence, by dominated convergence,
$$\lim\inf \dfrac{Z_t-\lvert \om \rvert}{t^\frac{1}{\alpha}} \geq \di_{\po}\di_{u\geq 0} \mathbf{1}_{a(\theta)>\delta}e^{-a(\theta)^{-1}u^\alpha} \,\dd u\,\dd\theta.$$
Letting $\delta \to 0$ yields by monotone convergence
$$\lim\inf \dfrac{Z_t-\lvert \om \rvert}{t^\frac{1}{\alpha}} \geq \di_{\po}\di_{u\geq 0} \mathbf{1}_{a(\theta)>0}e^{-a(\theta)^{-1}u^\alpha} \,\dd u\,\dd\theta.$$
For the upper bound, observe that $$\di_{\po}\di_{r\geq \xi} e^\frac{-V(r,\theta)}{t}\kappa(r,\theta)\,\dd r\dd\theta = o(t^\frac{1}{\alpha})$$
as the above integral is exponentially small, so that for $\delta>0$ and $\xi>0$ such that $\lvert\eta(r,\theta)\rvert \leq \frac{\delta}{2}$ for all $\theta \in \po$ and $r \leq \xi$,

\allowdisplaybreaks
\begin{equation*}
        \begin{split}
            Z_t-\lvert \om\rvert 
            &= \di_{\oc} e^\frac{-V}{t}=\di_{\po}\di_{r\geq 0} e^\frac{-V(r,\theta)}{t}\kappa(r,\theta)\,\dd r\dd\theta=o_{\xi}(t^\frac{1}{\alpha})+\di_{\po}\di_{r\leq \xi} e^\frac{-V(r,\theta)}{t}\kappa(r,\theta)\,\dd r\dd\theta\\
            & \leq o_{\xi}(t^\frac{1}{\alpha})+\di_{\po}\di_{r\leq \xi} \mathbf{1}_{a(\theta)>\delta} e^\frac{-V(r,\theta)}{t}\kappa(r,\theta)\,\dd r\dd\theta+\di_{\po}\di_{r\leq \xi} \mathbf{1}_{a(\theta)\leq\delta}e^\frac{-V(r,\theta)}{t}\kappa(r,\theta)\,\dd r\dd\theta\\
            &\leq o_\xi(t^\frac{1}{\alpha})+t^\frac{1}{\alpha}\di_{\po}\di_{u\geq 0}\mathbf{1}_{a(\theta)>\delta} e^{-(a(\theta)+\eta(t^\frac{1}{\alpha}u,\theta))^{-1}u^\alpha}\kappa(t^\frac{1}{\alpha}u,\theta) \,\dd u\,\dd\theta\\&+\di_{\po}\di_{r\leq \xi} \mathbf{1}_{a(\theta)\leq\delta}e^\frac{2r^\alpha}{\delta t}\kappa(r,\theta)\,\dd r\,\dd\theta\\
            &= o_\xi(t^\frac{1}{\alpha})+t^\frac{1}{\alpha}\di_{\po}\di_{u\geq 0}\mathbf{1}_{a(\theta)>\delta} e^{-(a(\theta)+\eta(t^\frac{1}{\alpha}u,\theta))^{-1}u^\alpha}\kappa(t^\frac{1}{\alpha}u,\theta) \,\dd u\,\dd\theta+O((\delta t)^\frac{1}{\alpha})\\
        \end{split}
    \end{equation*}
yielding
$$\lim\sup \dfrac{Z_t-\lvert \om \rvert}{t^\frac{1}{\alpha}} \leq \di_{\po}\di_{u\geq 0} \mathbf{1}_{a(\theta)>0}e^{-a(\theta)^{-1}u^\alpha} \,\dd u\,\dd\theta +O(\delta^\frac{1}{\alpha})$$
from which it follows that $$\lim\sup \dfrac{Z_t-\lvert \om \rvert}{t^\frac{1}{\alpha}} \leq \di_{\po}\di_{u\geq 0} \mathbf{1}_{a(\theta)>0}e^{-a(\theta)^{-1}u^\alpha} \,\dd u\,\dd\theta \leq  \lim\inf \dfrac{Z_t-\lvert \om \rvert}{t^\frac{1}{\alpha}}.$$
Since $\theta \mapsto a(\theta)$ is bounded, we have $V(r,\theta) \geq cr^\alpha$ for all $r>0$, where $c$ is a positive constant. Therefore,
\begin{align*}
        \begin{split}
            Z_t-\lvert \om\rvert = \di_{\oc} e^\frac{-V}{t}
            =\di_{\po}\di_{r\geq 0} e^\frac{-V(r,\theta)}{t}\kappa(r,\theta)\,\dd r\,\dd\theta\leq \di_{\po}\di_{r\geq 0} e^\frac{-cr^\alpha}{t}\kappa(r,\theta)\,\dd r\,\dd\theta= O(t^\frac{1}{\alpha})
        \end{split}
    \end{align*}
and
$$\lim \dfrac{Z_t-\lvert \om \rvert}{t^\frac{1}{\alpha}} =\di_{\po}\di_{u\geq 0} \mathbf{1}_{a(\theta)>0}e^{-a(\theta)^{-1}u^\alpha} \,\dd u\dd\theta=\di_{\po}a(\theta)^\frac{1}{\alpha} \,\dd \theta \di_{u\geq 0} e^{-u^\alpha}\,\dd u.$$
which completes the proof.
\end{proof}

The following proposition generalizes Lemma \ref{magic}.
\begin{proposition}\label{klslemma}
    Let $U,W \in C(\mathbb{R}_+)$ be convex potentials such that $c_1x^\alpha \leq U(x) \leq c_2x^\beta$ on a neighborhood of $0$, where $c_1,c_2,\alpha,\beta$ are positive constants. Then, for $h \in C^1(\R_+)$ and small $\varepsilon>0$,
    $$\left\vert\di_{\R_+}\left(h^2-h^2(0)\right)e^{-\frac{U}{t}-W}\right\vert \leq \varepsilon h(0)^2+ \dfrac{C}{\varepsilon}t^{\frac{3}{\alpha}-\frac{1}{\beta}}\di_{\R_+}h'^2e^{-\frac{U}{t}-W}$$
    for some positive constant $C$ (depending on $U$ and $W$).
\end{proposition}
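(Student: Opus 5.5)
We follow the proof of Lemma \ref{magic}, replacing the Gaussian Poincaré inequality with the log-concave bound of Proposition \ref{kls1d}. Set $g=h-h(0)$, so that $g(0)=0$. Write $\rho_t=e^{-\frac{U}{t}-W}$ on $\R_+$ and $m_t=\int_{\R_+}\rho_t$. Then
$$\di_{\R_+}(h^2-h(0)^2)\rho_t = 2h(0)\di_{\R_+} g\rho_t+\di_{\R_+} g^2\rho_t .$$
By Young's inequality with a suitable weight, $2\lvert h(0)\int g\rho_t\rvert\leq \varepsilon h(0)^2+\varepsilon^{-1}(\int \lvert g\rvert\rho_t)^2$. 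We then bound $(\int\lvert g\rvert\rho_t)^2\leq m_t\int g^2\rho_t$ by Cauchy--Schwarz. Since $m_t$ is bounded (it is at most the integral of $e^{-W}$ near $0$ plus exponentially small terms, assuming $e^{-W}$ integrable, as it is in the application), everything reduces to a weighted Hardy/Poincaré-type estimate
$$\di_{\R_+} g^2\rho_t\leq K_t\di_{\R_+} g'^2\rho_t\quad\text{for } g(0)=0, \qquad K_t\lesssim t^{\frac{3}{\alpha}-\frac{1}{\beta}} .$$
With $\varepsilon$ small, the factor $1+\varepsilon^{-1}m_t\lesssim\varepsilon^{-1}$, which gives the claim. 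Note that we use the weighted measure itself, not a Gaussian, so no $\sqrt t$ normalization of the $h(0)^2$ term is needed: the statement only asks for $\varepsilon h(0)^2$.

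To get the weighted estimate, extend $g$ to an odd function $\tilde g$ on $\R$ and $\rho_t$ to an even density $\tilde\rho_t$. The density is log-concave, since $U(\lvert x\rvert)$ and $W(\lvert x\rvert)$ are convex provided $U,W$ are nondecreasing on $\R_+$ (for $U$ this holds because $U\geq 0=U(0)$ near $0$ with $U$ convex; for $W$ we would either assume it or symmetrize differently). The extended function has mean zero, so Proposition \ref{kls1d} in dimension one gives
$$\int g^2\rho_t\leq C_1\,\frac{\int x^2\tilde\rho_t}{\int\tilde\rho_t}\int g'^2\rho_t .$$
Equivalently, this is Remark \ref{generalizationmeancontrol} applied to the normalized measure $\tilde\rho_t/\int\tilde\rho_t$.

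It therefore remains to bound the variance ratio $\int_0^\infty x^2\rho_t/\int_0^\infty\rho_t$. This is the main computation, and where the exponents come from. For the numerator, the lower bound $U\geq c_1x^\alpha$ near $0$ (and linear growth beyond by convexity, giving exponentially small tails) yields $\int x^2\rho_t\lesssim\int x^2e^{-c_1x^\alpha/t}\,\dd x\sim t^{3/\alpha}$. For the denominator, the upper bound $U\leq c_2x^\beta$ together with local boundedness of $W$ gives $\int\rho_t\gtrsim\int_0^{\delta}e^{-c_2x^\beta/t}\,\dd x\sim t^{1/\beta}$. The ratio is therefore $O(t^{\frac{3}{\alpha}-\frac{1}{\beta}})$, as required.

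The delicate points are uniformity in $t$ (the constant must not depend on $t$) and the log-concavity of the symmetrized density, which is where the monotonicity of $W$ near $0$ must be checked or arranged. The lower-bound direction of the absolute value follows from the same decomposition, since both cross terms were bounded in absolute value.
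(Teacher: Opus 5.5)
Your proposal is correct and follows the paper's proof essentially step for step. The shared steps are the Young-inequality and odd-extension reduction of Remark \ref{generalizationmeancontrol}, the bound on the Poincaré constant of the symmetrized weight by its second moment via Proposition \ref{kls1d}, and the moment ratio estimated with $c_1x^\alpha$ in the numerator and $c_2x^\beta$ (plus local boundedness of $W$) in the denominator.

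The monotonicity of $W$ that you flag is a point the paper passes over silently. It is not harmless in Lemma \ref{magicnd}: there $U'(0^+)=0$ because $V$ is $C^1$ and minimal on $\po$, while $W=-\log\kappa(\cdot,\theta)$ has $W'(0^+)=-\sum_i\kappa_i<0$ wherever the mean curvature is positive. It can, however, be sidestepped. Apply Proposition \ref{kls1d} directly to the log-concave measure $\rho_t\,\dd x$ on $\R_+$, and control the mean of $g$ separately using $\int_0^\infty g\rho_t=\int_0^\infty g'(s)\big(\int_s^\infty\rho_t\big)\dd s$, Cauchy--Schwarz, and the fact that $s\mapsto\rho_t(s)^{-1}\int_s^\infty\rho_t$ is nonincreasing for log-concave $\rho_t$. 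This extra term contributes only $O(t^{2/\alpha})\leq O(t^{\frac{3}{\alpha}-\frac{1}{\beta}})$, since $\alpha\geq\beta$.
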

\begin{proof} 
    Proposition \ref{klslemma} is a particular case of Remark \ref{generalizationmeancontrol}. It is enough to show that $$C_P(e^{-\frac{U}{t}-W}) \leq C_{U}t^{\frac{3}{\alpha}-\frac{1}{\beta}}$$ for some constant $C_U$ that may depend on $U$.
    This can be seen as a consequence of Proposition \ref{kls1d}. Indeed,
    \begin{align*}
        \begin{split}
        C_P(e^{-\frac{U}{t}}) &\leq \dfrac{\di_0^\infty x^2 e^{-\frac{U(x)}{t}-W(x)}\,\dd x}{\di_0^\infty e^{-\frac{U(x)}{t}-W(x)}\,\dd x}\leq \dfrac{\di_0^\infty x^2 e^{-\frac{c_1x^\alpha}{t}-W(x)}\,\dd x}{\di_0^\infty e^{-\frac{c_2x^\beta}{t}-W(x)}\,\dd x} = t^{\frac{3}{\alpha}-\frac{1}{\beta}}  \dfrac{\di_0^\infty y^2 e^{-c_1y^\alpha-W(t^\frac{1}{\alpha} y)}\,\dd x}{\di_0^\infty e^{-c_2y^\beta-W(t^\frac{1}{\beta} y)}\,\dd x}.
        \end{split}
    \end{align*}
Hence $ C_P(e^{-\frac{U}{t}}) =O(t^{\frac{3}{\alpha}-\frac{1}{\beta}})$ since $W$ is bounded on a neighborhood of $0$.
\end{proof}

Lemma \ref{klslemma} is designed to be applied to half-lines pointing outwards from the domain $\om$. In the following lemma, it allows us to estimate integrals over $\oc$ by integrals with respect to a limiting surface measure that arises in the limit $t \to 0$, which captures the growth of $V$ near the boundary $\po$.

\begin{lemma}\label{magicnd}
Define $p$ as the orthogonal projection onto the convex set $\om$ and let $\sigma_t$ be the push-forward of the probability measure $\frac{\mathbf{1}_{\oc}}{Z_t-\om}\mu_t$ by $p$, so that for all $u \in C(\partial \om)$, it holds that
\begin{align*}
    \di_{\partial \om} u \,\dd{\sigma_t}=\dfrac{1}{Z_t-|\om|}\di_{\po}\di_{r\geq 0} u(\theta)\kappa(r,\theta)e^\frac{-V(r,\theta)}{t} \,\dd r\dd\theta.
    \end{align*}
Then, for all $h \in H^1(\oc,\,\dd \mu_t)$, we have 
$$\di_{\oc} h^2(x)e^\frac{-V(x)}{t}\,\dd x = \left(1+o(1)\left(1+\lVert h \rVert^2_{H^1(\om)}\right)\right)(Z_t-|\om|) \di_{\partial \om} h^2 \dd\tilde{\sigma}+o(Z_t-\lvert \om \rvert)\di_{\oc} \lVert\nabla h(x) \rVert ^2 e^\frac{-V(x)}{t}\,\dd x$$
where $ \tilde{\sigma}$ is the probability measure supported on $\om$ such that $$\dfrac{\,\dd \sigma_t}{\,\dd \theta} \xrightarrow[]{L^\infty(\dd\theta)} \dfrac{\,\dd\tilde{\sigma}}{\,\dd \theta} \propto a(\theta)^\frac{1}{\alpha}.$$
\end{lemma}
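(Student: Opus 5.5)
We pass to normal coordinates $x=\theta+r\nu(\theta)$ and write
$$\dic h^2 e^{-V/t}\,\dd x=\di_{\po}\di_{r\ge0} h(r,\theta)^2\kappa(r,\theta)e^{-V(r,\theta)/t}\,\dd r\,\dd\theta .$$
For each fixed $\theta$ we apply Proposition \ref{klslemma} on the half-line $\{\theta+r\nu(\theta)\}$. We take $U(r)=V(r,\theta)$ and $W(r)=-\log\kappa(r,\theta)$; both are convex, the first by convexity of $V$ along the normal ray and the second by log-concavity of $\kappa(\cdot,\theta)$. Their growth satisfies $c_1r^\alpha\le U\le c_2 r^\beta$ near $0$. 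This follows from Assumption \ref{assumptions} with $a$ bounded, giving the lower bound with $c_1=1/(2\lVert a\rVert_\infty)$, and from $V\le C\,\mathrm{dist}^\beta$.

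The $\varepsilon$ in Proposition \ref{klslemma} is an absolute quantity. To obtain errors relative to $Z_t-|\om|\sim\gamma t^{1/\alpha}$ (Lemma \ref{zt}), we rescale it. Going through Remark \ref{generalizationmeancontrol}, we apply the inequality with $\varepsilon$ replaced by $\varepsilon m_\theta(t)$, where $m_\theta(t)=\int_0^\infty \kappa e^{-V/t}\,\dd r$. This yields
$$\Big|\di_0^\infty (h^2-h(0,\theta)^2)\kappa e^{-V/t}\,\dd r\Big|\le \varepsilon\, m_\theta(t)h(0,\theta)^2+(1+\varepsilon^{-1})C_{\mathrm P}(\kappa e^{-V/t}\,\dd r)\di_0^\infty |\partial_r h|^2\kappa e^{-V/t}\,\dd r .$$
We then bound $C_{\mathrm P}\le C t^{3/\alpha-1/\beta}$ uniformly in $\theta$. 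The hypothesis $\alpha<2\beta$ gives $3/\alpha-1/\beta>1/\alpha$, so this is $o(t^{1/\alpha})=o(Z_t-|\om|)$. Uniformity holds because the constants $c_1,c_2$ and the neighbourhood can be chosen independently of $\theta$, by uniformity of the $o(1)$ in the assumption and smoothness and compactness of $\po$. Directions where $a(\theta)$ is small are not a problem here, since only the lower bound $V\ge c_1 r^\alpha$ enters the variance numerator. Finally $|\partial_r h|\le\lVert\nabla h\rVert$.

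Integrating in $\theta$, and using $\int_{\po}m_\theta h(0,\theta)^2\,\dd\theta=(Z_t-|\om|)\int h^2\,\dd\sigma_t$, we get
$$\dic h^2e^{-V/t}=(1+O(\varepsilon))(Z_t-|\om|)\di_{\po}h^2\,\dd\sigma_t+O_\varepsilon(t^{3/\alpha-1/\beta})\dic\lVert\nabla h\rVert^2e^{-V/t}.$$
We let $\varepsilon\to0$ slowly as $t\to 0$, so that $\varepsilon^{-1}t^{3/\alpha-1/\beta}=o(t^{1/\alpha})$. This gives the last term of the statement.

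It remains to replace $\sigma_t$ by $\tilde\sigma$, and this is the main step. We first show $\dd\sigma_t/\dd\theta\to\dd\tilde\sigma/\dd\theta$ in $L^\infty$. We have
$$\frac{m_\theta(t)}{t^{1/\alpha}}=\int_0^\infty e^{-u^\alpha/(a(\theta)+\eta(t^{1/\alpha}u,\theta))}\,\kappa(t^{1/\alpha}u,\theta)\,\dd u ,$$
and this converges to $a(\theta)^{1/\alpha}\Gamma(1+1/\alpha)$ uniformly in $\theta$. The argument repeats Lemma \ref{zt} with uniform bounds in place of integrated ones. The map $s\mapsto\int e^{-u^\alpha/s}\,\dd u=s^{1/\alpha}\Gamma(1+1/\alpha)$ is uniformly continuous on bounded $s\ge0$, including near $s=0$, which handles the points where $a$ vanishes. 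We then divide by the convergence of Lemma \ref{zt}. Writing $\dd\sigma_t/\dd\theta=(1+o(1))\,\dd\tilde\sigma/\dd\theta+o(1)$ in sup norm, we obtain
$$\Big|\di h^2\,\dd\sigma_t-\di h^2\,\dd\tilde\sigma\Big|\le o(1)\lVert h\rVert^2_{L^2(\po)}\le o(1)\lVert h\rVert^2_{H^1(\om)}$$
by the trace theorem on the smooth bounded domain $\om$. Here the trace of $h$ from inside and outside must agree, which holds for $h\in H^1$ across $\po$. The delicate point is that this error is additive, and $\int h^2\,\dd\tilde\sigma$ can be zero where $a$ vanishes. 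This is precisely why the statement carries the factor $1+o(1)(1+\lVert h\rVert^2_{H^1(\om)})$, which should be read as an additive $o(1)\lVert h\rVert^2_{H^1(\om)}$ correction absorbed into that prefactor. I expect this bookkeeping, together with checking the uniform-in-$\theta$ Poincaré bound, to be the main obstacle.
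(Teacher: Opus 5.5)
Your proposal is correct and follows essentially the same route as the paper. The paper also applies Proposition \ref{klslemma} ray by ray, in the mass-weighted form of Remark \ref{generalizationmeancontrol}. It lets $\varepsilon\to0$ slowly (it takes $\varepsilon=\log(t^{-1})^{-1}$), using $3/\alpha-1/\beta>1/\alpha$. It then passes from $\sigma_t$ to $\tilde\sigma$ via uniform convergence of $\dd\sigma_t/\dd\theta$ and the trace theorem. Your two differences are minor. Where the paper splits into $\{a>\delta\}$ and $\{a\le\delta\}$, you use uniform continuity of $s\mapsto s^{1/\alpha}$, which is equivalent. Your reading of the error as an additive $o(1)(Z_t-|\om|)\lVert h\rVert^2_{H^1(\om)}$ term is exactly what the paper's proof produces, and it is the form used later in Theorem \ref{mainthm}.
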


\begin{proof}
The beginning of the proof goes along the same lines as Lemma \ref{magic}.
Let $h \in C^{\infty}_c(\mathbb{R}^d)$, then, by Proposition \ref{klslemma},
\begin{align*}
    \begin{split}
    &\left\vert \dic h(x)^2e^\frac{-V(x)}{t} \,\dd x-(Z_t-\lvert \om \rvert)\di_{\partial\om} h^2 \,\dd \sigma_t  \right\vert
     \leq \di_{\theta \in \partial \om} \varepsilon h(\theta)^2 \di_{r}   e^\frac{-V(r,\theta)}{t}\kappa(r,\theta)\dd rd\theta\\&       +       \di_{\theta \in \partial \om}   \frac{C}{\varepsilon}t^{\frac{3}{\alpha}-\frac{1}{\beta}}\di_{r}   \Big(\frac{d}{\dd r}h(r,\theta)\Big)^2 e^\frac{-V(r,\theta)}{t}\kappa(r,\theta)\dd rd\theta\\&\leq \varepsilon (Z_t-\lvert \om \rvert)\di_{\partial\om} h^2 \,\dd \sigma_t +\frac{C}{\varepsilon}t^{\frac{3}{\alpha}-\frac{1}{\beta}}\di_{\theta \in \partial \om}  \di_{r}    \lVert \nabla h(r,\theta)\rVert^2 e^\frac{-V(r,\theta)}{t}\kappa(r,\theta)\dd rd\theta.\\
    \end{split}
\end{align*}
Moreover, let $\delta>0, k\in \mathbb{N}^*$ and let $\xi>0$ be such that $\underset{\theta \in \po,~r\leq \xi}{\sup}\lvert \eta(r,\xi)\rvert \leq \frac{\delta}{k}$. We have, for $\theta \in \po$ such that $a(\theta)>\delta$,
\begin{align*}
    \begin{split}
    \dfrac{\,\dd \sigma_t}{\,\dd \theta}&=\di_{r\geq 0}\frac{1}{Z_t-\lvert \om\rvert}\kappa(r,\theta)e^{\frac{-V(r,\theta)}{t}}\,\dd r =\frac{1}{Z_t-\lvert \om\rvert} \di_{r\geq 0} e^\frac{-r^\alpha}{t(a(\theta)+\eta(r,\theta))}\kappa(r,\theta)\,\dd r \\&= o_{\delta,k}(1)+\frac{1}{Z_t-\lvert \om\rvert}\di_{r\leq \xi}  e^\frac{-r^\alpha}{t(a(\theta)+\eta(r,\theta))}\kappa(r,\theta)\,\dd r= o_{\delta,k}(1)+\frac{1}{Z_t-\lvert \om\rvert}\di_{r\leq \xi}  e^\frac{-r^\alpha}{t(a(\theta)+\eta(r,\theta))}\kappa(r,\theta)\,\dd r\\
    &=\frac{\left(a(\theta)+O(\frac{\delta}{k})\right)^\frac{1}{\alpha}\left(o_{\delta,k}(1)+\di_{u\geq 0} e^{-u^\alpha}\right)}{\gamma+o(1)}.
    \end{split}
\end{align*}
Hence $\dfrac{\,\dd \sigma_t}{\,\dd \theta} \longrightarrow \dfrac{a(\theta)^\frac{1}{\alpha}}{\di_{\po} a(\theta)^\frac{1}{\alpha}\dd\theta} $ uniformly in $\{\theta \in \po : a(\theta)>\delta\}$.
Similarly, we have $$\underset{t \to 0^+}{\limsup}\, \underset{\theta}{\sup} \mathbf{1}_{a(\theta)\leq \delta }\dfrac{\,\dd \sigma_t}{\,\dd \theta} = \underset{\delta \to 0^+}{o}(1)$$
from which it follows that $$\dfrac{\,\dd \sigma_t}{\,\dd \theta} \xrightarrow[]{L^\infty(\po)}\dfrac{a(\theta)^\frac{1}{\alpha}}{\di_{\po} a(\theta)^\frac{1}{\alpha}\,\dd\theta}\coloneqq \dfrac{\,\dd \tilde{\sigma}}{\,\dd \theta}.$$
Thus, for $h \in C^\infty_c(\R)$, it holds that
\begin{align*}
    \begin{split}
        \di_{\po} h^2\,\dd\sigma_t=\di_{\po} h^2\,\dd\tilde{\sigma}+o(1)\di_{\po} h^2\,\dd\theta=\di_{\po} h^2\,\dd\tilde{\sigma}+o(1)\lVert h \rVert^2_{L^2(\dd \theta)}=\di_{\po} h^2\,\dd\tilde{\sigma}+o(1)\lVert h \rVert^2_{H^1(\om)}
    \end{split}
\end{align*}
Letting $\varepsilon=\log(t^{-1})^{-1}$ in
\begin{align*}
    \begin{split}
    \left\vert \dic h(x)^2e^\frac{-V(x)}{t} \,\dd x-(Z_t-\lvert \om \rvert)\di_{\partial\om} h^2 \,\dd \sigma_t  \right\vert
    \leq &\varepsilon (Z_t-\lvert \om \rvert)\di_{\partial\om} h^2 \,\dd \sigma_t\\&+\frac{C}{\varepsilon}t^{\frac{3}{\alpha}-\frac{1}{\beta}}\di_{\theta \in \partial \om}  \di_{r}    \lVert \nabla h(r,\theta)\rVert^2 e^\frac{-V(r,\theta)}{t}\kappa(r,\theta)\,\dd rd\theta
    \end{split}
\end{align*}
and observing that $t^{\frac{3}{\alpha}-\frac{1}{\beta}}\log(t^{-1})=o(t^{\frac{1}{\alpha}})$ yields the desired conclusion.
\end{proof}

Finally, we recall a regularity estimate.
\begin{lemma}\label{pasencore}
    Let $V \in C^{1}_{loc}(\mathbb{R}^n)$ be a nonnegative convex function such that $\di\!e^{-V}$ is finite. Let $L_V=\Delta-\nabla V.\nabla$ and $g \in H^2_{loc}(\mathbb{R}^n)\cap H^1(e^{-V})$ be such that 
    $L_Vg=-\lambda g$ for some constant $\lambda \in \mathbb{R}$. Then, the eigenfunction $g\in H^2(e^{-V})$ and $\di \lVert \nabla^2g\rVert_F^2e^{-V} \leq \lambda^2 \di g^2e^{-V} $.
\end{lemma}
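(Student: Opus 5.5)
The natural route is to test the eigenvalue equation against $L_V g$ and use Bochner's integrated formula (Proposition \ref{bochner}), which expresses $\int (L_Vg)^2 e^{-V}$ as $\int \langle \nabla^2 V\nabla g,\nabla g\rangle e^{-V} + \int \lVert\nabla^2 g\rVert_F^2 e^{-V}$. Since $L_Vg=-\lambda g$, the left-hand side equals $\lambda^2\int g^2 e^{-V}$. Convexity of $V$ makes the first term on the right nonnegative, so we get $\int\lVert\nabla^2g\rVert_F^2e^{-V}\le\lambda^2\int g^2e^{-V}$. Combined with $g\in H^1(e^{-V})$, this yields $g\in H^2(e^{-V})$. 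The real work is justifying Bochner's formula: $V$ is only $C^1_{loc}$, so $\nabla^2V$ exists only as a nonnegative matrix-valued measure (Alexandrov/distributional Hessian), and $g$ is neither compactly supported nor smooth.

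\textbf{Step 1: cutoff.} I would work with $\chi_R g$, where $\chi_R$ is a smooth cutoff equal to $1$ on $B_R$ and supported in $B_{2R}$ with $\lvert\nabla\chi_R\rvert\lesssim 1/R$. Rather than invoking Bochner's formula as a black box, I would prove the inequality directly by integrating by parts in the weak formulation. For $\varphi$ compactly supported, compute $\int (L_V\varphi)^2 e^{-V} = \sum_{i,j}\int \partial_i\varphi\,\partial_j(\ldots)$. Equivalently, differentiate the equation: formally $\partial_k g$ satisfies $L_V\partial_k g - \sum_j\partial_{jk}V\,\partial_j g = -\lambda\partial_k g$. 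Testing against $\chi_R^2\partial_k g\,e^{-V}$ and summing over $k$ gives
\[
\int \chi_R^2\lVert\nabla^2g\rVert_F^2e^{-V}+\int\chi_R^2\langle\nabla^2V\nabla g,\nabla g\rangle e^{-V} = \lambda\int\chi_R^2\lVert\nabla g\rVert^2e^{-V} + \text{(cutoff terms)}.
\]
This produces the bound $\int\lVert\nabla^2g\rVert^2\le\lambda\int\lVert\nabla g\rVert^2 = \lambda^2\int g^2$, where the last equality comes from testing the original equation against $g$.

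\textbf{Step 2: regularize $V$.} To make the differentiated equation legitimate, I would mollify $V$ into $V_\epsilon=V*\rho_\epsilon$, which is smooth and convex with $\nabla V_\epsilon\to\nabla V$ locally uniformly. The equation for $g$ becomes $\Delta g-\nabla V_\epsilon\cdot\nabla g = -\lambda g + (\nabla V-\nabla V_\epsilon)\cdot\nabla g$. On the support of $\chi_R$, the right-hand error tends to $0$ in $L^2$, since $\nabla g\in L^2_{loc}$. Alternatively, I would stay with difference quotients of $g$ (Nirenberg's method): convexity enters through $\langle\nabla V(x+he_k)-\nabla V(x),\,\cdot\,\rangle$ having the right sign after summation, via monotonicity of $\nabla V$. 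Either way, the Hessian term of $V$ is dropped as nonnegative, and the cross term $\int\chi_R^2\lvert\nabla^2g\rvert^2$ is controlled locally by $H^2_{loc}$.

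\textbf{Step 3: remove the cutoff.} The cutoff terms are of the form $\int\chi_R\lvert\nabla\chi_R\rvert\,\lvert\nabla g\rvert\,\lvert\nabla^2 g\rvert e^{-V}$. By Young's inequality these are bounded by $\tfrac12\int\chi_R^2\lvert\nabla^2g\rvert^2e^{-V}+C R^{-2}\int\lvert\nabla g\rvert^2e^{-V}$; the first piece is absorbed into the left-hand side, and the second vanishes as $R\to\infty$ because $g\in H^1(e^{-V})$. Monotone convergence then gives the claim. I expect the main obstacle to be this low-regularity justification — handling the measure-valued Hessian of $V$ and the passage $\epsilon\to0$ — rather than the algebra itself.
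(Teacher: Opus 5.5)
Your identity from testing the differentiated equation against $\chi_R^2\partial_k g\,e^{-V}$ is correct, and the strategy can be completed. As written, though, two steps do not yet give the lemma.

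\emph{First, the constant.} Absorbing $\tfrac12\int\chi_R^2\lVert\nabla^2g\rVert_F^2e^{-V}$ leaves only a factor $\tfrac12$ on the left. After $R\to\infty$ you therefore get $\int\lVert\nabla^2g\rVert_F^2e^{-V}\le2\lambda^2\int g^2e^{-V}$. This gives $g\in H^2(e^{-V})$ but not the stated bound. Use Young's inequality with a small parameter $\delta$, let $R\to\infty$, then $\delta\to0$. Alternatively, once finiteness is known, bound the cross term directly by $CR^{-1}\lVert\nabla g\rVert_{L^2(e^{-V})}\lVert\nabla^2 g\rVert_{L^2(e^{-V})}\to0$.

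\emph{Second, regularity.} Mollifying $V$ does not make the differentiated equation legitimate. The function $g$ is only $H^2_{loc}$, and if $\nabla^2V$ has a singular part, $g$ need not be $H^3_{loc}$. Moreover, $\partial_k$ of the error $(\nabla V-\nabla V_\epsilon)\cdot\nabla g$ contains the measure $\nabla^2V$ paired with $\nabla g$. The remedy is never to differentiate the equation. For smooth convex $W$, $\chi\in C_c^\infty$ and smooth $g$, integration by parts gives
\begin{align*}
&\int\chi^2(L_Wg)^2e^{-W}+2\int\chi\,L_Wg\,\langle\nabla\chi,\nabla g\rangle e^{-W}\\
&\qquad=\int\chi^2\lVert\nabla^2g\rVert_F^2e^{-W}+2\int\chi\,\langle\nabla^2g\,\nabla g,\nabla\chi\rangle e^{-W}+\int\chi^2\langle\nabla^2W\,\nabla g,\nabla g\rangle e^{-W}.
\end{align*}
Here $g$ carries at most two derivatives, so the identity extends to $g\in H^2_{loc}$ by density.

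Now take $W=V_\epsilon$, drop the last (nonnegative) term, and let $\epsilon\to0$. This uses only your observation that $L_{V_\epsilon}g\to L_Vg=-\lambda g$ in $L^2$ on $\mathrm{supp}\,\chi$. The first line becomes $\lambda^2\int\chi^2g^2e^{-V}-2\lambda\int\chi g\langle\nabla\chi,\nabla g\rangle e^{-V}$, which is at most $\lambda^2\int g^2e^{-V}+O(R^{-1})$ for $\chi=\chi_R$. So you do not even need $\int\lVert\nabla g\rVert^2e^{-V}=\lambda\int g^2e^{-V}$.

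Your difference-quotient alternative fails as stated. Monotonicity of $\nabla V$ controls $\langle\nabla V(x+he_k)-\nabla V(x),e_k\rangle$, but the summed form $\frac1h\sum_k\xi_k\langle\nabla V(x+he_k)-\nabla V(x),\xi\rangle$ can be negative. For the convex $V=\tfrac12(x_1-x_2)_+^2$ at $x=0$ it equals $\xi_1(\xi_1-\xi_2)$, which is $-1$ at $\xi=(1,2)$.

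\emph{Comparison with the paper.} The paper localizes differently. It proves $\int(L_V\varphi)^2e^{-V}\ge\int\lVert\nabla^2\varphi\rVert_F^2e^{-V}$ for smooth compactly supported $\varphi$, via Bochner's formula for smooth convex $V_n\to V$ in $W^{1,\infty}_{loc}$. It extends this by density to compactly supported $H^2$ functions and applies it to $\chi_ng$, using that $L_V(\chi_ng)-\chi_nL_Vg=gL_V\chi_n+2\nabla\chi_n\cdot\nabla g\to0$ in $L^2(e^{-V})$.

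This handles regularity the same way as the repair above: derivatives of $V$ only ever fall on $V_n$. Since that commutator contains no $\nabla^2 g$, no absorption is needed, and the sharp constant comes directly from $\int\chi_n^2(L_Vg)^2e^{-V}\le\lambda^2\int g^2e^{-V}$.

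Your localization makes the opposite trade. Your cutoff errors contain $\nabla^2 g$, hence the absorption, but never $\nabla V$, so removing the cutoff uses only $g\in H^1(e^{-V})$. The paper's term $g\,\nabla V\cdot\nabla\chi_n$ instead needs some control of $\nabla V$ against $\nabla\chi_n$ at infinity. That is harmless when $\nabla V$ grows at most linearly.
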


\begin{proof}
Let $(V_n)_n$ be a sequence of smooth convex functions such that $V_n \to V$ in $W^{1,\infty}_{loc}(\mathbb{R}^n)$, and let $\varphi \in C^{\infty}_c(\mathbb{R}^n)$.
By the integrated Bochner formula (Proposition \ref{bochner}), it holds that
\begin{equation}
    \begin{split}
        \di (L_{V_n}\varphi)^2e^{-V_n}&=\di \lVert \nabla^2 \varphi \rVert_F^2e^{-V_n}+\di \langle\nabla^2V_n \nabla\varphi,\nabla \varphi\rangle e^{-V_n}\\
        &\geq \di \lVert \nabla^2 \varphi \rVert_F^2e^{-V_n}. 
    \end{split}
    \nonumber
\end{equation}

Taking the limit as $n \to \infty$, it follows that $\di (L_V\varphi)^2e^{-V} \geq \di \lVert \nabla^2 \varphi \rVert_F^2e^{-V} $.
Using a standard density argument, the latter inequality holds for all compactly supported functions $g \in H^2(\mathbb{R}^n)$.
Let $R>0$. For $n \in \mathbb{N}$, let $\chi_n \in C_c^{\infty}(\mathbb{R}^n)$ be a smooth cutoff function such that $\chi =1$ in the centered ball of radius $R$, and such that $ \lVert\nabla\chi_n\rVert_{\infty}+\lVert \nabla^2 \chi_n\rVert_\infty\to 0$. 
It follows that $L_V(\chi_ng)-\chi_nL_V(g)=gL_V(\chi_n) +2\nabla \chi_n.\nabla g \xrightarrow[]{L^2(e^{-V}\,\dd x)}0$. Hence, 
\begin{equation}
    \begin{split}
        \lambda^2\di g^2e^{-V} &= \di L_V(g)^2e^{-V} \geq \di\chi_n^2L_V(g)^2e^{-V}=\di L_V(\chi_n g)^2e^{-V}+o(1) \quad \text{as } n \to \infty\\
        &\geq \di \lVert \nabla^2 (\chi_ng)\rVert_F^2e^{-V}+o(1)\geq \di_{B_R} \lVert \nabla^2 g\rVert_F^2e^{-V}+o(1).
        \end{split}
    \nonumber
\end{equation}
The result follows by letting $R\to \infty$.
\end{proof}

We are now in position to state and prove the precise version of Theorem \ref{thm1}.
\begin{theorem}\label{mainthm}
    We have $$\dfrac{C_{\mathrm{P}}(t)-C_{\mathrm{P}}(0)}{Z_t -\lvert \om \rvert} \xrightarrow[t\to 0]{} \underset{f}{\sup} \bigg(\di_{\po}f^2\dd\tilde{\sigma}-C_{\mathrm{P}}(0)\di_{\po}\lVert \nabla f \rVert^2\dd\tilde{\sigma}\bigg) \in \mathbb{R}$$
    where the supremum above is taken over all Neumann eigenfunctions $f \colon  \om \to \mathbb{R}$ associated to the smallest eigenvalue and such that $\di_\om\lVert\nabla f(x) \rVert^2\,\dd x = 1$.
    
\end{theorem}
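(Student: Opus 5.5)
We prove matching lower and upper bounds. Write $\varepsilon_t \coloneqq Z_t-\lvert\om\rvert\sim\gamma t^{\frac1\alpha}$ (Lemma \ref{zt}), $C_0\coloneqq C_{\mathrm{P}}(0)$, and let $E$ be the (finite-dimensional) space of first nontrivial Neumann eigenfunctions on $\om$, i.e. the optimizers for $C_0$. Note that $\varepsilon_t\,\tilde\sigma$ is the first-order boundary correction to $Z_t\mu_t$.

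\textbf{Lower bound.} Take $f\in E$ with $\int_\om\lVert\nabla f\rVert^2=1$, so $\int_\om f=0$ and $\int_\om f^2=C_0$. By elliptic regularity on the smooth convex domain, $f$ is smooth up to $\po$. Extend $f$ to $\oc$ by $\tilde f(r,\theta)=f(\theta)$, which is Lipschitz. Laplace's method, in the form of Lemma \ref{magicnd} and the $L^\infty$ convergence $\dd\sigma_t/\dd\theta\to\dd\tilde\sigma/\dd\theta$, gives
\[
Z_t\int\tilde f^2\,\dd\mu_t=C_0+\varepsilon_t\int_{\po}f^2\,\dd\tilde\sigma+o(\varepsilon_t),
\qquad
Z_t\int\lVert\nabla\tilde f\rVert^2\,\dd\mu_t=1+\varepsilon_t\int_{\po}\lVert\nabla_{\po}f\rVert^2\,\dd\tilde\sigma+o(\varepsilon_t).
\]
The second expansion uses that $\nabla\tilde f$ at $(r,\theta)$ is the tangential gradient of $f$ distorted by a factor $1+O(r)$, and that $\partial_\nu f=0$ on $\po$, so $\nabla_{\po}f=\nabla f$ there. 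The mean satisfies $\int\tilde f\,\dd\mu_t=O(\varepsilon_t)$, so its square is $o(\varepsilon_t)$. Taking the ratio gives $C_{\mathrm{P}}(t)\ge C_0+\varepsilon_t\bigl(\int f^2\dd\tilde\sigma-C_0\int\lVert\nabla f\rVert^2\dd\tilde\sigma\bigr)+o(\varepsilon_t)$. Finally take the supremum over the compact unit sphere of $E$.

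\textbf{Upper bound.} The spectrum of $-L_{V/t}$ is discrete because $V$ grows at least like $c\,\mathrm{dist}^\alpha$. Let $g_t$ be a first eigenfunction, $L_{V/t}g_t=-\lambda_t g_t$ with $\lambda_t=1/C_{\mathrm{P}}(t)$, normalized by $\int_\om\lVert\nabla g_t\rVert^2=1$, and shifted so that $\int_\om g_t=0$ (shifting by a constant changes the variance only by a term we handle below). The proof has four steps.
\begin{compactenumi}
\item \emph{A priori bounds.} By the lower bound, $\lambda_t\le 1/C_0+O(\varepsilon_t)$. Lemma \ref{pasencore}, applied to $V/t$, gives $\int\lVert\nabla^2g_t\rVert_F^2\,\dd\mu_t\lesssim\int g_t^2\,\dd\mu_t$. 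Using Lemma \ref{magicnd} to control the $\oc$ part by boundary traces, we get that $g_t$ is bounded in $H^2(\om)$, and $\int_{\oc}\lVert\nabla g_t\rVert^2e^{-V/t}=O(\varepsilon_t)$.
\item \emph{Compactness.} Along subsequences, $g_t\to u$ strongly in $H^1(\om)$, with traces of $g_t$ and $\nabla g_t$ converging in $L^2(\po)$. As in Step 2 of Theorem \ref{1d}, $u\in E$ with $\int_\om\lVert\nabla u\rVert^2=1$.
\item \emph{Expansion of the Rayleigh quotient.} Write
\[
\mathrm{Var}_{\mu_t}(g_t)\le\frac{1}{Z_t}\Bigl(\int_\om g_t^2+\int_{\oc}g_t^2e^{-V/t}\Bigr),
\]
and expand the $\oc$ integral by Lemma \ref{magicnd} as $\varepsilon_t\int_{\po}g_t^2\,\dd\tilde\sigma+o(\varepsilon_t)$. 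For the denominator, keep $\int_\om\lVert\nabla g_t\rVert^2$ and bound the exterior Dirichlet energy from below by $\varepsilon_t\int_{\po}\lVert\nabla_{\po}g_t\rVert^2\dd\tilde\sigma-o(\varepsilon_t)$. For this lower bound, apply Lemma \ref{magicnd} to each component of $\nabla g_t$: this is legitimate because $\nabla g_t\in H^1(\mu_t)$, with bounded norm, by step (i).
\item \emph{Conclusion.} Using $\int_\om g_t^2\le C_0\int_\om\lVert\nabla g_t\rVert^2$ together with the convergence of traces from step (ii) yields $C_{\mathrm{P}}(t)\le C_0+\varepsilon_t\bigl(\int u^2\dd\tilde\sigma-C_0\int\lVert\nabla u\rVert^2\dd\tilde\sigma\bigr)+o(\varepsilon_t)$, which is bounded by the supremum in the statement.
\end{compactenumi}

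\textbf{Main obstacle.} The delicate step is controlling the exterior gradient energy from below, i.e. the $H^2$ bound that makes $\lVert\nabla g_t\rVert^2$ on $\oc$ close to its trace value on $\po$. Lemma \ref{pasencore} is stated for the measure $e^{-V}$; for $V/t$ its constant $\lambda_t^2$ stays bounded, which is exactly what we need. However, converting weighted $H^2$ bounds into convergence of traces of $\nabla g_t$ requires care where $a$ vanishes, since the weight is then degenerate. There I would restrict to $\{a>\delta\}$ and let $\delta\to0$, as in Lemma \ref{zt}.
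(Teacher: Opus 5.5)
Your proposal is correct and follows essentially the paper's argument. The lower bound tests the Poincaré inequality for $\mu_t$ on an extension of a first Neumann eigenfunction. The upper bound works with the eigenfunction $g_t$: Lemma \ref{pasencore} supplies the $H^2$ control that lets Lemma \ref{magicnd} be applied to both $g_t^2$ and $\lVert\nabla g_t\rVert^2$, and compactness then identifies the limit as a Neumann eigenfunction. Your variations are only cosmetic: extending by $f\circ p$ rather than a $C^2$ extension, normalizing $\int_\om g_t=0$, and using compactness of the trace rather than lower semicontinuity.

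One aside is inaccurate: Assumption \ref{assumptions} only controls $V$ near $\om$, and a convex $V$ may grow only linearly at infinity, so $-L_{V/t}$ need not have discrete spectrum. Existence of $g_t$ for small $t$ should instead come from the bottom of the essential spectrum tending to $+\infty$ as $t\to0$; the paper itself simply assumes $g_t$ exists. Also, your worry about the degenerate weight where $a$ vanishes is unnecessary: the $H^2$ bound on $\om$ is unweighted, and Lemma \ref{magicnd} already absorbs that set.
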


\begin{remark}
\normalfont
    Recalling that $Z_t- \lvert \om \rvert \sim \gamma t^\frac{1}{\alpha}$, Theorem \ref{mainthm} states that $$C_P(t)=C_P(0)+\Lambda_{\om, V}t^\frac{1}{\alpha}+o(t^\frac{1}{\alpha})$$
    where $$\Lambda_{\om, V}=C_\alpha\left(\di_{\po}a(\theta)^\frac{1}{\alpha} \,\dd \theta \right) ~ \underset{f}{\sup} \bigg(\di_{\po}f^2 \,\dd\tilde{\sigma}-C_{\mathrm{P}}(0)\di_{\po}\lVert \nabla f \rVert^2 \,\dd\tilde{\sigma}\bigg)$$ with
    $C_\alpha=\di_{u\geq 0} e^{-u^\alpha}\,\dd u=\alpha^{-1}\Gamma(\alpha^{-1})$ and where the supremum is taken over the same set as in Theorem $\ref{mainthm}$. Therefore, $\dfrac{\Lambda_{\om, V}}{C_\alpha \di_{\po}a(\theta)^\frac{1}{\alpha} \,\dd \theta }$ is itself the optimal constant $C$ in the functional inequality $\di_{\po}f^2 \,\dd\tilde{\sigma}\leq C_{\mathrm{P}}(0)\di_{\po}\lVert \nabla f \rVert^2 \,\dd\tilde{\sigma}+C\di_\om \lVert \nabla f \rVert^2$ satisfied by the Neumann eigenfunctions mentioned above.
\end{remark}

\begin{remark}\label{lam}
\normalfont
It is part of the theorem that $\Lambda_{\om,V}$ is finite. Moreover, considering the generic case where $\om$ is such that the smallest nonzero Neumann eigenvalue is simple, since $\tilde{\sigma}$ only depends on the growth of $V$ in $\oc$ near $\po$, which can be perturbed slightly without violating Assumption \ref{assumptions}, the constant $\Lambda_{\om,V}$ is expected to be generically nonzero as it is an integral depending on the Neumann eigenfunction with respect to the perturbed measure. In this context, Theorem \ref{1d} provides an example where $\Lambda_{\om, V} \neq 0$.
\end{remark}

\begin{proof}

For $t>0$, we denote by $g_t$ a function achieving equality in the Poincaré inequality satisfied by the measure $\mu_t$, such that $\di g_t\,\dd \mu_t=0$ and that $\di_\om \lVert \nabla g_t \rVert^2\,\dd \mu_t=1$. Therefore $$L_tg_t=-\dfrac{1}{C_{\mathrm{P}}(t)}g_t \text{ and } C_{\mathrm{P}}(t)=\dfrac{\di g_t^2\,\dd \mu_t}{\di \lVert\nabla g_t \rVert^2\,\dd \mu_t}$$ where $L_t=\Delta-\frac{1}{t}\nabla V.\nabla$.

Up to rescaling $\Omega$ and $V$, we may suppose that $C_{\mathrm{P}}(0)=1$. For simplicity, we suppose that the eigenvalue $1$ is simple for the Laplace operator under Neumann boundary conditions on $\om$ (see footnote $4$ for a clarification).

We divide the proof in three steps :

\vspace{10pt}
\noindent\textbf{Step 1.} \textit{As $t \to 0$, the functions $g_t$ ``concentrate'' on $\om$.}
\vspace{10pt}

    We begin by observing that $\di_{\oc} g_t^2e^\frac{-V}{t}$ and $\di_{\oc} \lVert \nabla g_t \rVert^2e^\frac{-V}{t}$ of order $t^\frac{1}{\alpha}$, and that $$\di_{\om} g_t^2 \to 1.$$
Indeed, let $h_t=g_t-\frac{1}{|\om|}\di_\om g_t$, applying Lemma \ref{magicnd}, then recalling that we have a continuous trace embedding $H^1(\om) \to L^2(\po)$, it holds that, for $t$ sufficiently small,

\allowdisplaybreaks
\begin{align*}
\begin{split}
        C_{\mathrm{P}}(t)&\leq \dfrac{\di_\om h_t(x)^2\,\dd x+\di_{\oc}h_t(x)^2e^\frac{-V(x)}{t}\,\dd x}{\di_{\om} \lVert\nabla h_t(x)\rVert^2\,\dd x+\di_{\oc}\lVert\nabla h_t(x)\rVert^2 e^\frac{-V(x)}{t}\,\dd x} \\
        &\leq \dfrac{\di_\om h_t(x)^2\,\dd x+2(Z_t-|\om|)\di_{\po}h_t(\theta)^2\dd\tilde{\sigma}(\theta)+o(t^\frac{1}{\alpha})\times \di_{\oc}\lVert\nabla h_t(x)\rVert^2 e^\frac{-V(x)}{t}\,\dd x +o(t^\frac{1}{\alpha})}{\di_{\om} \lVert\nabla h_t(x)\rVert^2\,\dd x+\di_{\oc}\lVert\nabla h_t(x)\rVert^2 e^\frac{-V(x)}{t}\,\dd x}\\
        &\leq \dfrac{\di_\om h_t(x)^2\,\dd x}{\di_{\om} \lVert\nabla h_t(x)\rVert^2\,\dd x+\di_{\oc}\lVert\nabla h_t(x)\rVert^2 e^\frac{-V(x)}{t}\,\dd x}+O(t^\frac{1}{\alpha})\\
        &\leq \dfrac{\di_\om h_t(x)^2\,\dd x}{\di_{\om} \lVert\nabla h_t(x)\rVert^2\,\dd x}+O(t^\frac{1}{\alpha}) \leq 1+O(t^\frac{1}{\alpha}).
    \end{split}
\end{align*} 
Since $C_{\mathrm{P}}(t) \geq 1+O(t^\frac{1}{\alpha})$ (see the lower bound given in Step 3), it yields $$C_{\mathrm{P}}(t)=1+O(t^\frac{1}{\alpha}) ,\quad\di_\om h_t^2(x)\,\dd x=1+O(t^\frac{1}{\alpha})$$ and also $$\dfrac{\di_\om h_t(x)^2\,\dd x}{\di_{\om} \lVert\nabla h_t(x)\rVert^2\,\dd x+\di_{\oc}\lVert\nabla h_t(x)\rVert^2 e^\frac{-V(x)}{t}\,\dd x}=1+O(t^\frac{1}{\alpha})$$
which implies $\di_{\oc}\lVert\nabla g_t(x)\rVert^2 e^\frac{-V(x)}{t}\,\dd x=\di_{\oc}\lVert\nabla h_t(x)\rVert^2 e^\frac{-V(x)}{t}\,\dd x=O(t^\frac{1}{\alpha})$.

Besides,
\begin{equation}
    \begin{split}
        \bigg(\di_\om g_t\bigg)^2&= \bigg(\di_{\oc} g_te^\frac{-V}{t}\bigg)^2\\
        &\leq \bigg(\di_{\oc} g_t^2e^\frac{-V}{t}\bigg)\bigg(\di_{\oc} 1^2e^\frac{-V}{t}\bigg) \\
        &\leq \Big(2(Z_t-\lvert \om \rvert )\di_{\po}g_t(\theta)^2\,\dd\tilde{\sigma}(\theta)+o(t^\frac{1}{\alpha})\Big) \times (Z_t-\lvert \om \rvert) \text{~~~(by Lemma \ref{magicnd}})\\
        & \leq O(t^\frac{2}{\alpha})
    \end{split}
    \nonumber
\end{equation}
where we used the continuity of the trace embedding $H^1(\om) \to L^2(\po)$ and the fact that $Z_t=\lvert \om \rvert +O(t^\frac{1}{\alpha})$.

\vspace{10pt}

\noindent\textbf{Step 2.} \textit{Upper bound on $C_{\mathrm{P}}(t)$.}
\vspace{10pt}

Since $\di_\om \lVert \nabla g_t \rVert^2\,\dd \mu_t=1$, we know by Lemma \ref{magicnd} that  $$\di_{\oc}g_t^2e^\frac{-V}{t}=(Z_t-\lvert \om \rvert )\di_{\po}g_t(\theta)^2\dd\tilde{\sigma}(\theta) + o(t^\frac{1}{\alpha})$$
and recalling Lemma \ref{pasencore},
$$\di_{\oc} \lVert \nabla g_t\rVert^2e^\frac{-V}{t}=(Z_t-\lvert \om \rvert )\di_{\po}\lVert \nabla g_t(\theta)\rVert ^2\dd\tilde{\sigma}(\theta) + o(t^\frac{1}{\alpha}).$$
Therefore,    
\begin{equation}
    \begin{split}
C_{\mathrm{P}}(t)&=\dfrac{\di_\om g_t(x)^2\,\dd x+(Z_t-\lvert \om \rvert )\di_{\po}g_t(\theta)^2\dd\tilde{\sigma}(\theta)}{\di_{\om} \lVert\nabla g_t(x)\rVert^2\,\dd x+(Z_t-\lvert \om \rvert )\di_{\po}\lVert \nabla g_t(\theta)\rVert ^2\dd\tilde{\sigma}(\theta)}+o(t^\frac{1}{\alpha})\\
&= \di_\om g_t(x)^2\,\dd x+(Z_t-\lvert \om \rvert)\Bigg(\di_{\po}g_t(\theta)^2\dd\tilde{\sigma}(\theta)-\di_{\po}\lVert \nabla g_t(\theta)\rVert ^2\dd\tilde{\sigma}(\theta)\Bigg)+o(t^\frac{1}{\alpha})\\
&\leq 1+(Z_t-\lvert \om \rvert)\Bigg(\di_{\po}f(\theta)^2\dd\tilde{\sigma}(\theta)-\di_{\po}\lVert \nabla f(\theta)\rVert ^2\dd\tilde{\sigma}(\theta)\Bigg)+o(t^\frac{1}{\alpha})\\
    \end{split}
    \nonumber
\end{equation}
where, in the last inequality, we have applied the Poincaré inequality satisfied by $\mu_0 = \dfrac{\,\dd x_{|\om}}{\lvert \om \rvert}$ and used that $\Big(\di_\om g_t\Big)^2=o(t^\frac{1}{\alpha})$. Moreover, by the simplicity\footnote{We assumed in the beginning of the proof that the smallest nonzero eigenvalue is simple, but this assumption is not mandatory : in general, the corresponding eigenspace is finite-dimensional and the same argument holds by considering the accumulation points of $(g_t)_{t > 0}$ as $t\to 0^+$.} of the smallest Neumann eigenvalue, we have (up to a sign change for every $g_t$) ${g_t}_{|\om} \xrightarrow{H^1(\om)}f$ (as $\di_\om g_t(x)\,\dd x \to 0$),
and by continuity of the trace embedding $H^1(\om) \to L^2(\partial\om)$, we also have ${g_t}_{|\partial \om} \xrightarrow[]{L^2(\partial\om)} f_{|\partial\om}$, which further implies, since $(g_t)_{t> 0}$ is uniformly bounded in  $H^2(\om)$, that $$\di_{\po}\lVert \nabla f(\theta)\rVert ^2\dd\tilde{\sigma}(\theta) \leq \underset{t \to 0}{\liminf} \di_{\po}\lVert \nabla g_t(\theta)\rVert ^2\dd\tilde{\sigma}(\theta).$$

\noindent\textbf{Step 3.} \textit{Lower bound on $C_{\mathrm{P}}(t)$.}
\vspace{10pt}

The function $f$ being a Neumann eigenfunction on a smooth domain $\om$, it holds that $f \in C^\infty(\bar{\om})$. We can therefore extend it to a function defined on all of $\mathbb{R}^n$, such that $\lVert f \rVert_{C^2(\mathbb{R}^n)} < \infty$. It follows from this condition that
 $$\di_{\oc}f^2e^\frac{-V}{t}=(Z_t-\lvert \om \rvert )\di_{\po}f(\theta)^2\dd\tilde{\sigma}(\theta) + o(t^\frac{1}{\alpha})$$
 and (recalling Lemma \ref{pasencore}) that 
 $$\di_{\oc} \lVert \nabla f\rVert^2e^\frac{-V}{t}=(Z_t-\lvert \om \rvert )\di_{\po}\lVert \nabla f(\theta)\rVert ^2\dd\tilde{\sigma}(\theta) + o(t^\frac{1}{\alpha}).$$
 Note that $\bigg\lvert \di_{\mathbb{R}^n} fe^\frac{-V}{t}\bigg\rvert =\bigg\lvert\di_{\oc} fe^\frac{-V}{t}\bigg\rvert \leq \lVert f \rVert_\infty \times O(t^\frac{1}{\alpha})$, therefore,
\begin{equation}
    \begin{split}
C_{\mathrm{P}}(t)&\geq\dfrac{\di_\om f(x)^2\,\dd x+(Z_t-\lvert \om \rvert )\di_{\po}f(\theta)^2\dd\tilde{\sigma}(\theta)}{\di_{\om} \lVert\nabla f(x)\rVert^2\,\dd x+(Z_t-\lvert \om \rvert )\di_{\po}\lVert \nabla f(\theta)\rVert ^2\dd\tilde{\sigma}(\theta)}+o(t^\frac{1}{\alpha})\\
&= 1+(Z_t-\lvert \om \rvert)\Bigg(\di_{\po}f(\theta)^2\dd\tilde{\sigma}(\theta)-\di_{\po}\lVert \nabla f(\theta)\rVert ^2\dd\tilde{\sigma}(\theta)\Bigg)+o(t^\frac{1}{\alpha}).\\
    \end{split}
    \nonumber
\end{equation}
This completes the proof.    
\end{proof}

\section{Low-temperature asymptotics of log-Sobolev constants in dimension $1$}\label{LSIasympt}

This section is devoted to the proof of Theorem \ref{lsi1d}. The proof consists in analyzing functions that nearly saturate the log-Sobolev inequality satisfied by $\mu_t$, in order to derive precise information about the corresponding optimal constant. In our setting, near-extremizers fall within the linearization regime that connects the log-Sobolev inequality to the spectral gap. In other words, they are close to constant functions, and the structure of their fluctuations dictates the asymptotic behavior $C_{LS}(\mu_t)$.

Before proving Theorem \ref{lsi1d}, we recall a useful result that we use to derive log-Sobolev inequalities. Indeed, one way to show that a probability measure satisfies a log-Sobolev inequality is to tighten a defective-LSI with a Poincaré inequality. This is known as the Rothaus lemma \cite{ROTHAUS1985296}. More precisely,

\begin{lemma}\label{Rothaus}
    Let $\mu$ be a probability measure on $\mathbb{R}^n$ satisfying a defective LSI, i.e.
    $$\mathrm{Ent}_\mu(f^2) \leq 2A \di \lVert \nabla f \rVert^2\,\dd \mu+B \di f^2\,\dd \mu$$
    for all regular functions $f$, where $A,B$ are nonnegative constants.
    Then,
    $$C_{\mathrm{LS}}(\mu) \leq A + \frac{B}{2}C_{\mathrm{P}}(\mu).$$
\end{lemma}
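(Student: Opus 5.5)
The plan is to follow Rothaus' classical centering argument and then try to absorb the variance term it produces. By homogeneity and density it suffices to treat a bounded smooth $f$. Write $m=\int f\,\dd\mu$ and $\tilde f=f-m$. The final bound would come from chaining three inequalities:
\begin{align*}
\mathrm{Ent}_\mu(f^2) &\le \mathrm{Ent}_\mu(\tilde f^2)+2\,\mathrm{Var}_\mu(f),\\
\mathrm{Ent}_\mu(\tilde f^2) &\le 2A\int \lVert\nabla f\rVert^2\,\dd\mu+B\int\tilde f^2\,\dd\mu,\\
\int\tilde f^2\,\dd\mu=\mathrm{Var}_\mu(f) &\le C_{\mathrm{P}}(\mu)\int\lVert\nabla f\rVert^2\,\dd\mu.
\end{align*}
The first line is the Rothaus inequality. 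The second is the defective LSI applied to $\tilde f$, using $\nabla\tilde f=\nabla f$. The third is the definition of $C_{\mathrm{P}}(\mu)$ from Section \ref{prelim}.

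For the first line, I would prove the Rothaus inequality by studying $\Phi(s)=\mathrm{Ent}_\mu((\tilde f+s)^2)$ for $s\in\mathbb R$. The steps are:
\begin{itemize}
\item expand $\varphi(x)=x\log x$ along the path $s\mapsto(\tilde f+s)^2$;
\item differentiate twice under the integral;
\item use the convexity of $\varphi$ and Jensen's inequality to show $\Phi''(s)\le 2\,\mathrm{Var}_\mu(f)\cdot\frac{d^2}{ds^2}(\text{normalisation})$ in the appropriate sense.
\end{itemize}
This should yield $\sup_s\bigl(\Phi(s)-\Phi(0)\bigr)\le 2\int\tilde f^2\,\dd\mu$, which is the standard content of the Rothaus lemma.

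Combining the three lines directly only gives
$$\mathrm{Ent}_\mu(f^2)\le\bigl(2A+(B+2)C_{\mathrm{P}}(\mu)\bigr)\int\lVert\nabla f\rVert^2\,\dd\mu.$$
The statement as worded asks for the sharper constant $A+\frac B2C_{\mathrm{P}}(\mu)$. So the main obstacle is to remove the extra $2\,\mathrm{Var}_\mu(f)$ term. My first attempt would be not to center at the mean, but to apply the defective inequality to $f-s$ with $s$ chosen to minimise $\mathrm{Ent}_\mu((f-s)^2)-B\int(f-s)^2\,\dd\mu$. I would then use the fact that $\Phi(s)-\Phi(0)$ is itself controlled by $B\int(\cdot)^2$ along that path, in the hope that the two variance contributions cancel.

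If this fails, I would reduce to near-extremal functions for $C_{\mathrm{LS}}(\mu)$. There are two cases. When $C_{\mathrm{LS}}(\mu)>C_{\mathrm{P}}(\mu)$, Remark \ref{weiss} gives an extremal function, which is non-constant. I would test the Euler--Lagrange equation against the defective inequality. When $C_{\mathrm{LS}}(\mu)=C_{\mathrm{P}}(\mu)$, the claimed bound should be checked directly from the defective inequality by linearisation at constants, i.e. by taking $f=1+\varepsilon g$.

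I expect this final step to be the genuine difficulty. The linearisation test is delicate: the term $B\int f^2$ does not vanish as $\varepsilon\to0$. Note also that for the application later in Section \ref{LSIasympt}, the weaker constant $A+\frac{B+2}{2}C_{\mathrm{P}}(\mu)$ would still suffice whenever $B$ is small.
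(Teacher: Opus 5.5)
\textbf{The gap: the last step cannot be closed, because the stated constant is false.} Your three-line chain is correct. It is exactly the classical Rothaus argument, and it gives $C_{\mathrm{LS}}(\mu)\le A+\frac{B+2}{2}C_{\mathrm{P}}(\mu)$. The only thing left is removing the $2\,\mathrm{Var}_\mu(f)$ term, and no shift optimisation, Euler--Lagrange argument or linearisation can do that. Combined with $C_{\mathrm{P}}\le C_{\mathrm{LS}}$, the stated bound would force $C_{\mathrm{P}}(\mu)\le A/(1-B/2)$ whenever $B<2$ and $C_{\mathrm{P}}(\mu)<\infty$. So a defective LSI alone would control the spectral gap, which fails for mixtures.

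Here is a concrete counterexample. Let $n=1$ and $\mu=\frac12\bigl(N(-1,1)+N(1,1)\bigr)$, whose density is proportional to $e^{-x^2/2}\cosh x$. It is log-concave, since $(x^2/2-\log\cosh x)''=\tanh^2 x\ge 0$, so $C_{\mathrm{P}}(\mu)<\infty$. Set $m_\pm=\int f^2\,\dd N(\pm1,1)$ and $m=\int f^2\,\dd\mu=\frac12(m_-+m_+)$. Then one has the exact identity
\[
\mathrm{Ent}_\mu(f^2)=\tfrac12\mathrm{Ent}_{N(-1,1)}(f^2)+\tfrac12\mathrm{Ent}_{N(1,1)}(f^2)+\tfrac12 m_-\log\tfrac{m_-}{m}+\tfrac12 m_+\log\tfrac{m_+}{m}.
\]
The Gaussian LSI on each component, together with $m_\pm\le 2m$, gives the defective LSI with $A=1$ and $B=\log 2$. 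The test function $f(x)=x$ gives $C_{\mathrm{P}}(\mu)\ge \mathrm{Var}_\mu(x)=2$. But the stated lemma together with $C_{\mathrm{P}}\le C_{\mathrm{LS}}$ would give $C_{\mathrm{P}}(\mu)\le 1+\frac{\log 2}{2}C_{\mathrm{P}}(\mu)$, that is, $C_{\mathrm{P}}(\mu)\le \frac{2}{2-\log 2}<2$. This is a contradiction. Your remark that linearising at constants yields nothing, because $B\int f^2$ does not vanish, is a symptom of the same problem.

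\textbf{What the paper actually relies on.} The paper gives no proof of its own; it only cites Rothaus. Rothaus's lemma is precisely the $B+2$ version your chain produces, so the statement has simply lost the ``$+2$''. This is harmless for the paper. The lemma is only invoked in Step~1 of the proof of Theorem~\ref{lsi1d}, to conclude that $C_{\mathrm{LS}}(\mu_t)<\infty$. Your corrected bound gives this for any finite $B$, not only for small $B$ as you suggest; in fact the constant $B_t$ used there tends to $+\infty$, roughly like $\log(1/r_t)+\log(1/l_t)$.

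\textbf{What to do.} State and prove $C_{\mathrm{LS}}(\mu)\le A+\frac{B+2}{2}C_{\mathrm{P}}(\mu)$, and stop after your three lines. For the first line, cite the Rothaus inequality $\mathrm{Ent}_\mu(f^2)\le \mathrm{Ent}_\mu(\tilde f^2)+2\int\tilde f^2\,\dd\mu$ (e.g.\ from \cite{bakry2014analysis}). Your $\Phi''$ sketch is too vague to stand on its own: the phrase ``in the appropriate sense'' carries the entire argument.
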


\vspace{10pt}

\newcommand{\fii}{\varphi}
\newcommand{\md}{m_r^{(t)}}
\newcommand{\mg}{m_l^{(t)}}
\newcommand{\mdn}{{\Tilde{m}}_r^{(t)}}
\newcommand{\mgn}{{\Tilde{m}}_l^{(t)}}
\newcommand{\at}{r_t}
\newcommand{\bt}{l_t}
\newcommand{\ct}{C_{\mathrm{LS}}(\mu_t)}
\newcommand{\st}{\sqrt{t}}
\newcommand{\fn}{\Tilde{f_t}}
\newcommand{\gn}{\Tilde{g_t}}
\newcommand{\tn}{{t_n}}

\begin{proofof}{Theorem \ref{lsi1d}}
Let $V$ be a potential satisfying the assumptions of Theorem \ref{lsi1d} and $[a,b] = \mathrm{argmin}(V)$.

In what follows, all implicit constants in big-$O$ and small-$o$ notation depend only on $V$.
\medskip

\noindent\textbf{Step 1.} \textit{General properties of $\mu_t$.}

Let us introduce the following notations
\begin{itemize}[itemsep=2pt, topsep=2pt]
\item $\fii : x \mapsto x\log(x)$
\item $I=[a,b],\quad R=[b,+\infty),\quad L= (-\infty, a]$
\item $r_t= \mu_t(R)=\mu_t\big([b,+\infty)\big),\quad l_t=\mu_t(L)=\mu_t\big((-\infty,a]\big)$
\item$\md(h)= \dfrac{\di_R h^2\,\dd \mu_t}{\mu_t(R)}=\dfrac{\di_{[b,+\infty)} h^2\,\dd \mu_t}{r_t},\quad\mg(h)= \dfrac{\di_L h^2\,\dd \mu_t}{\mu_t(L)}=\dfrac{\di_{[b,+\infty)} h^2\,\dd \mu_t}{l_t}$
\end{itemize}
for all functions $h$ for which the above expressions are meaningful. Note that the assumptions of Theorem \ref{lsi1d} imply that $l_t \underset{t\to0}{\sim} \alpha\sqrt{t}$ and that $r_t \underset{t\to 0}{\sim} \beta\sqrt{t}$ where $\alpha = \dfrac{1}{b-a}\sqrt{\dfrac{\pi}{2V''(a^-)}}$ and $\beta=\dfrac{1}{b-a}\sqrt{\dfrac{\pi}{2V''(b^+)}}$.

\medskip
Let $h$ be any compactly supported smooth function. Observe that by Theorem \ref{CS}, we have, at least for small values of $t$,
\begin{align}\label{Ic}
\begin{split}
\fii(\md(h)) \leq \dfrac{1}{\at}\di_R \fii(h^2)\,\dd \mu_t \leq \fii(\md(h))+Ct \cdot \dfrac{1}{\at} \di_R \lvert  h' \rvert^2\,\dd \mu_t\\
\fii(\mg(h)) \leq \dfrac{1}{\bt}\di_L \fii(h^2)\,\dd \mu_t \leq \fii(\mg(h))+Ct \cdot \dfrac{1}{\bt} \di_L \lvert  h' \rvert^2\,\dd \mu_t
\end{split}
\end{align}
for $t$ sufficiently small, where $C$ is a positive constant depending only on $V$.

Moreover, by Remark \ref{generalizationmeancontrol}, for $t$ sufficiently small and $0<\varepsilon<1$ it holds that
\begin{align}\label{epsilon}
\begin{split}
\lvert \mg(h) - h(\small a)^2\rvert &\leq \varepsilon h(\small a)^2+\dfrac{2}{\varepsilon}C_L t \Big(\frac{1}{\bt}\di_L  \lvert h' \rvert^2\,\dd \mu_t\Big)\\
    \lvert \md(h) - h(\small b)^2\rvert &\leq \varepsilon h(\small b)^2+\dfrac{2}{\varepsilon}C_R t \Big(\frac{1}{\at}\di_R  \lvert h' \rvert^2\,\dd \mu_t\Big)
\end{split}
\end{align}
where $C_R,C_L$ are positive constants. Taking $\varepsilon=t^{\frac{1}{4}}$, recalling that $r_t \sim\beta\st$ and $\bt \sim \alpha\st$, and observing that $\lVert h_{|I} \rVert_\infty^2=O\big(\di_I h^2\,\dd \mu_t+\di_I \lvert h' \rvert^2\,\dd \mu_t\big)$ yields
\begin{align}\label{meancontrol}
    \begin{split}
        \md(h)=h(b)^2+O\Bigg(t^{\frac{1}{4}}\bigg(\di h^2\,\dd \mu_t+\di \lvert h' \rvert^2\,\dd \mu_t\bigg)\Bigg)\\
        \mg(h)=h(a)^2+O\Bigg(t^{\frac{1}{4}}\bigg(\di h^2\,\dd \mu_t+\di \lvert h' \rvert^2\,\dd \mu_t\bigg)\Bigg)
    \end{split}
\end{align}

Finally, Lemma \ref{Rothaus} implies $C_{\mathrm{LS}}(\mu_t)<\infty$ for $t>0$ sufficiently small. Indeed, when $h$ is a compactly supported smooth function such that $\di h^2\,\dd \mu =1$, using the estimates \eqref{Ic} above and Proposition \ref{segment} yields
\allowdisplaybreaks
\begin{align*}
\begin{split}
    \di \varphi(h^2)\,\dd \mu_t &= \di_R \varphi(h^2)\,\dd \mu_t+\di_L \varphi(h^2)\,\dd \mu_t+\di_a^b \varphi(h^2)\,\dd \mu_t \\ &\leq r_t\varphi(\md(h))+Ct\di_R \lvert  h' \rvert^2\,\dd \mu_t\\&+\bt\varphi(\mg(h))+Ct\di_L \lvert  h' \rvert^2\,\dd \mu_t\\&+\mu_t([a,b])\varphi\Big(\frac{1}{\mu_t([a,b])}\di_I h^2\,\dd \mu_t\Big)+\frac{(b-a)^2}{\pi^2}\di_a^b  \lvert  h' \rvert^2.
\end{split}
\end{align*}
Observing that $\md(h) \leq \dfrac{1}{\at},~\mg(h) \leq \dfrac{1}{\bt}$ and $\frac{1}{\mu_t([a,b])}\di_I h^2\,\dd \mu_t\leq \dfrac{1}{\mu_t([a,b])} $, we get
\begin{equation}
    \di \varphi(h^2)\,\dd \mu_t \leq A_t\di \lvert h' \rvert^2\,\dd \mu_t +B_t
\end{equation}
where $A_t = \max(Ct,\frac{(b-a)^2}{\pi})$ and $B_t=\lvert r_t\fii(\frac{1}{r_t})\rvert+\lvert l_t\fii(\frac{1}{l_t})\rvert+\lvert\frac{1}{\mu_t([a,b])}\fii(\mu_t([a,b])\rvert$ are independent of $h$, which allows us to apply the Rothaus lemma.

\medskip

\noindent\textbf{Step 2.} \textit{Convergence of near-optimal functions.}

For $t>0$, let $f_t$ be a nonnegative compactly supported smooth function such that
    $$2C_{\mathrm{LS}}(\mu_t)+o(\st)=\dfrac{\di \fii(f_t^2)\,\dd \mu_t-\fii\Big(\di f_t^2\,\dd \mu_t\Big)}{\di \lvert f_t' \rvert^2\,\dd \mu_t}.$$

    By homogeneity of the logarithmic Sobolev inequality, we may assume without loss of generality that $\di f_t^2\,\dd \mu_t=1$.

Assume, for the sake of contradiction, that $\di \lvert f_t' \rvert^2\,\dd \mu_t$ is not bounded as $t \to 0$. Therefore, up to subsequence, $\di \lvert f_t' \rvert^2\,\dd \mu_t \xrightarrow[]{t\to 0}+\infty$. Let $g_t = \dfrac{f_t}{\sqrt{\int \lvert f_t' \rvert^2}}$ so that $\di g_t^2\,\dd \mu_t \to 0$ and $\di \lvert g_t' \rvert^2\,\dd \mu_t =1$. 
Since $\di_I g_t^2(x)\,\dd x \to 0$ and $\di_I\lvert g_t'(x)\rvert^2\,\dd x$ is bounded, it holds that ${g_t}_{|I}$ is uniformly bounded in $L^{\infty}(I)$ as $t \to 0$.
Hence,$\di_I\fii(g_t^2)\,\dd \mu_t \to 0$ as $t \to 0$. Indeed,
\begin{align*}
\bigg\lvert\di_I\fii(g_t^2)\,\dd \mu_t\bigg\rvert &\leq \di_I \underset{I}{\sup} \lvert g_t\log(g_t)\rvert\times \lvert g_t(x)\rvert \,\dd x \\
&\leq \underset{I}{\sup} \lvert g_t\log(g_t)\rvert \di_I \lvert g_t(x) \rvert \,\dd x \\
&\leq \big(1+\fii(\lVert {g_t}_{|I} \rVert_{L^\infty})\big) \sqrt{(b-a)\di_I g_t^2(x)\,\dd x}~ \longrightarrow 0 \text{ as $t \to 0$.}
\end{align*}
Therefore, applying \eqref{Ic} then \eqref{meancontrol},
\allowdisplaybreaks
\begin{align}\label{conv0}
\nonumber
    2\ct &\leq \di \fii(g_t^2)\,\dd \mu_t -\fii\bigg(\di g_t^2\,\dd \mu_t\bigg) +o(\sqrt{t})\\\nonumber
    &= \di_I \fii(g_t^2) \,\dd \mu_t + \di_R \fii(g_t^2) \,\dd \mu_t + \di_L \fii(g_t^2) \,\dd \mu_t -\fii\bigg(\di g_t^2\,\dd \mu_t\bigg)+o(1)\\\nonumber
    &=  \di_R \fii(g_t^2) \,\dd \mu_t + \di_L \fii(g_t^2)\,\dd \mu_t +o(1)\\\nonumber
    & \leq \at \fii(\md(g_t))+\bt\fii(\mg(g_t)) + Ct \di_{I^c} \lvert g_t' \rvert ^2\,\dd \mu_t +o(1)\\\nonumber
    &= \at \fii\Bigg[g_t(b)^2+O\Big(t^{\frac{1}{4}}\big(\di g_t^2\,\dd \mu_t+\di \lvert g_t' \rvert^2\,\dd \mu_t\big)\Big)\Bigg)\Bigg]\\\nonumber
    &+ \bt \fii\Bigg[g_t(a)^2+O\Big(t^{\frac{1}{4}}\big(\di g_t^2\,\dd \mu_t+\di \lvert g_t' \rvert^2\,\dd \mu_t\big)\Big)\Bigg)\Bigg] +o(1)\\\nonumber
    &=\at \fii(g_t(b)^2+o(1)) + \bt\fii(g_t(a)^2+o(1)) +o(1)\\
    &=O(\st)+o(1) \text{ since $\lVert {g_t}_{|I} \rVert_\infty^2=O\big(\di_I g_t^2\,\dd \mu_t+\di_I \lvert g_t' \rvert^2\,\dd \mu_t\big)=O(1)$}.
    \end{align}
However, from Section \ref{prelim}, and recalling Theorem \ref{1d}, for $t$ sufficiently small, we have
\begin{equation}\label{borneinf}
    \ct \geq C_{\mathrm{P}}(\mu_t)\geq \dfrac{(b-a)^2}{\pi^2}
\end{equation}
which is in contradiction with \eqref{conv0}.

\medskip
Hence, $\di \lVert f_t'\rVert^2\,\dd \mu_t$ is bounded as $t \to 0$. Recall that $\di f_t^2\,\dd \mu_t=1$ so that $\lVert{f_t}_{|I}\rVert_\infty$ is bounded as $t \to 0$.
For brevity, in the sequel, we shall write $\md$ for $\md(f_t)$ and $\mg$ for $\mg(f_t)$. Note that by \eqref{meancontrol} applied to $f_t$, $\md$ and $\mg$ are bounded as $t \to 0$.
More precisely,
$\md = f_t(b)^2+o(1)$ and $\mg=f_t(a)^2+o(1)$.

\noindent As a consequence, since $r_t+l_t=O(\st)$, \begin{equation}\label{cvL2I}
\di_I f_t^2\,\dd \mu_t = \di f_t^2\,\dd \mu_t-\at\md-\bt\mg=1+O(\st).
\end{equation}
Now, applying \eqref{Ic} and recalling that $\at+\bt=O(\st)$,
\begin{align}\label{H1conv}
\nonumber
2\ct&=  \dfrac{\di_I\fii(f_t^2)\,\dd \mu_t+\di_R \fii(f_t^2) \,\dd \mu_t + \di_L \fii(f_t^2) \,\dd \mu_t }{\di \lvert  f_t' \rvert^2\,\dd \mu_t}+o(\st)\\ 
\nonumber
&\leq \dfrac{\di_I\fii(f_t^2)\,\dd \mu_t+\at\fii(\md)+\bt\fii(\mg)+Ct\di_{I^c} \lvert f_t' \rvert ^2\,\dd \mu_t}{\di \lvert  f_t' \rvert^2\,\dd \mu_t}+o(\st)\\
\nonumber
& = \dfrac{\di_I\fii(f_t^2)\,\dd \mu_t+O(\sqrt{t})+O(t)}{\di \lvert  f_t' \rvert^2\,\dd \mu_t}+o(\st)\\
&=\dfrac{\di_I\fii(f_t^2)\,\dd \mu_t+o(1)}{\di \lvert  f_t' \rvert^2\,\dd \mu_t}+o(1) \leq \dfrac{\di_I\fii(f_t^2)\,\dd \mu_t+o(1)}{\di_I \lvert  f_t' \rvert^2\,\dd \mu_t}+o(1).
\end{align}

Let $f : I \to \mathbb{R}$ be any weak-$H^1(I)$, strong-$L^\infty(I)$ accumulation point of ${f_t}_{|I}$ as $t \to 0$. If $f$ is not a constant, then
$$0<\di_I \lvert f'(x) \rvert^2 \,\dd x\leq \liminf \di_I \lvert f_t'(x)\rvert^2 \,\dd x = (b-a)\liminf \di_I \lvert f_t' \rvert^2\,\dd \mu_t$$ (where the $\liminf$ is taken along any subsequence converging to $f$). Moreover,
\begin{align*}
&\di_I\varphi(f^2(x))\,\dd x = \lim \di_I\varphi(f_t^2(x))\,\dd x = (b-a)\lim \di_I\fii(f_t^2)\,\dd \mu_t,\\
&\dfrac{1}{b-a}\di_I f^2(x)\,\dd x = \lim \di_If_t^2\,\dd \mu_t\leq 1
\end{align*}
from which it follows, as $\fii$ takes nonpositive values on $[0,1]$, that $f$ attains equality in the logarithmic Sobolev inequality satisfied by $\mu_0$ as
\begin{align*}
    2C_{\mathrm{LS}}(\mu_0)\leq \liminf 2\ct &\leq \dfrac{\di_I\fii(f^2)\,\dd \mu_0}{\di_I \lvert  f' \rvert^2\,\dd \mu_0}\leq \dfrac{\di\fii(f^2)\,\dd \mu_0-\fii\bigg(\di f^2\,\dd \mu_0\bigg)}{\di_I \lvert  f' \rvert^2\,\dd \mu_0}.
\end{align*}
which contradicts the nonexistence of optimal functions for the logarithmic Sobolev inequality on a segment (see Remark \ref{weiss}).

\noindent Therefore, any weak-$H^1(I)$, strong-$L^\infty(I)$ accumulation point of ${f_t}_{|I}$ as $t \to 0$ is constant.
Let $c$ be such a constant, then, by \eqref{cvL2I}, 
$$c^2 = \lim \di_If_t^2\,\dd \mu_t=1.$$
By compactness of the embedding $H^1(I) \xrightarrow[]{} C(I)$, since $f_t \geq 0$ for all $t>0$,
\begin{align*}
{f_t}_{|I} \xrightarrow[]{L^\infty} 1.
\end{align*}
Using then \eqref{borneinf} and \eqref{H1conv},
\begin{align*}
    \dfrac{2(b-a)^2}{\pi^2}+o(1)\leq 2\ct &\leq \dfrac{\di_I\fii(f_t^2)\,\dd \mu_t+o(1)}{\di \lvert  f_t' \rvert^2\,\dd \mu_t}+o(1)\leq \dfrac{o(1)}{\di \lvert  f_t' \rvert^2\,\dd \mu_t}+o(1)
\end{align*}
which leads to
$$\di \lvert  f_t' \rvert^2\,\dd \mu_t \xrightarrow[]{}0.$$

Thus, we have proved the following facts:
\begin{enumerate}[label=(\roman*), itemsep=2pt, topsep=2pt]
\item ${f_t}_{|I} \xrightarrow[]{L^\infty}1$
\item $\md \to 1$ (as $\md=f_t(b)^2+o(1)$)
\item $\mg \to 1$
\item $\di \lvert  f_t' \rvert^2\,\dd \mu_t \xrightarrow[]{}0$.
\end{enumerate}

\vspace{15pt}
\noindent\textbf{Step 3.} \textit{Reduction to a problem on $I$}

Now, we shall prove that we may assume that $f_t$ is constant on $R$ and on $L$, exactly like in the proof of Theorem \ref{1d}.

Up to rescaling $f_t$ by a factor of $1+o(1)$, by homogeneity of the logarithmic Sobolev inequality, we may assume without loss of generality $$\dfrac{1}{b-a}\di_If_t(x)\,\dd x=1.$$
Note that facts (i)–(iv) remain valid, however, although $\di f_t^2\,\dd \mu_t$ is no longer equal to $1$, we still have $\di f_t^2\,\dd \mu_t=1+o(1)$.
Let $\fn$ the function defined by
\begin{equation*}
    \fn(x)=f_t(\mathrm{proj_{[a,b]}}(x))=
    \begin{cases}
        f_t(x) &\text{ if $x \in I$}\\
        f_t(a) &\text{ if $x < a$}\\
        f_t(b) &\text{ if $x > b$}
    \end{cases}
\end{equation*}
We shall prove that
$$2C_{\mathrm{LS}}(\mu_t)=\dfrac{\di \fii(\fn^2)\,\dd \mu_t-\fii\Big(\di \fn^2\,\dd \mu_t\Big)}{\di \lvert \fn' \rvert^2\,\dd \mu_t}+o(\st).$$

Define $g_t$ as $f_t-1$. Let us first observe that by Theorem \ref{CS},
\begin{equation}\label{poincarépl}
    \begin{split}
        \di_R (g_t-g_t(b))^2\,\dd \mu_t \leq c_Rt\di_R \lvert g_t' \rvert^2\,\dd \mu_t
    \end{split}
\end{equation}
for $t$ sufficiently small, where $c_R$ is a positive constant. Therefore,

\begin{equation}\label{order1}
    \begin{split}
        \bigg\lvert\di_R g_t\,\dd \mu_t-\at g_t(b)\bigg\rvert &\leq \di_R \lvert g_t- g_t(b)\rvert\,\dd \mu_t\\
        & \leq \sqrt{\at\di_R (g_t-g_t(b))^2\,\dd \mu_t}~\text{ (by Cauchy-Schwarz)}\\
        & \leq \sqrt{\at c_Rt\di_R \lvert g_t'\rvert ^2\,\dd \mu_t} ~\text{ (by \eqref{poincarépl})}\\
        &= O(t^\frac{3}{4})\sqrt{\di_R \lvert g_t'\rvert ^2\,\dd \mu_t}.
    \end{split}
\end{equation}
Moreover,
\begin{equation}
    \begin{split}
        \at\md &= \di_R f_t^2\,\dd \mu_t=\di_R(1+g_t(b)+g_t-g_t(b))^2\,\dd \mu_t\\
        &=\at(1+g_t(b))^2+\di_R (g_t-g_t(b))^2\,\dd \mu_t  + 2 \di_R g_t -g_t(b)\,\dd \mu_t +2g_t(b)\di_R g_t -g_t(b)\,\dd \mu_t.
    \end{split}
    \nonumber
\end{equation}
Hence
\allowdisplaybreaks
\begin{align}\label{produit}\nonumber
       &\big \lvert\md-f_t(b)^2-\frac{2}{\at}\di_R g_t -g_t(b)\,\dd \mu_t\big\rvert\leq\frac{1}{r_t}\di_R (g_t-g_t(b))^2\,\dd \mu_t +\big\lvert\frac{2}{r_t}g_t(b)\di_R g_t -g_t(b)\,\dd \mu_t\big\rvert\\ \nonumber
       &\leq c_R\frac{t}{\at}\di_R\lvert g_t' \rvert^2\,\dd \mu_t+\frac{1}{r_t}\bigg(t^{\frac{3}{4}}g_t(b)^2+t^{-\frac{3}{4}}\Big(\di_R g_t -g_t(b)\,\dd \mu_t\Big)^2\bigg)\\ \nonumber
       &\leq O(\sqrt{t})\di_R\lvert g_t' \rvert^2\,\dd \mu_t+O(t^{\frac{1}{4}})g_t(b)^2+O(\frac{1}{r_t}t^{-\frac{3}{4}}.r_tc_Rt)\di_R \lvert g_t'\rvert ^2\,\dd \mu_t\\ \nonumber
       &\leq O(t^{\frac{1}{4}})g_t(b)^2+o(1)\di_R \lvert g_t'\rvert ^2\,\dd \mu_t\\
       &\leq O(t^{\frac{1}{4}})\di_I\lvert g_t'\rvert^2\,\dd \mu_t + o(1)\di_R \lvert g_t'\rvert ^2\,\dd \mu_t
\end{align}
where, in the last inequality, we used that $\lVert {g_t}_{|I}^2 \rVert_\infty=O\Big(\di_I \lvert g_t' \rvert^2\,\dd \mu_t\Big)$ since $\di_Ig_t = 0$.

Let $z_t=\md-f_t(b)^2$ so that $z_t \to 0$, and more precisely, we can rewrite the inequality above as
\begin{align}\label{maj1}
    \begin{split}
        \Big\rvert z_t - \frac{2}{\at}\di_R g_t -g_t(b)\,\dd \mu_t \Big\rvert
        \leq O(t^{\frac{1}{4}})\di_I\lvert g_t'\rvert^2\,\dd \mu_t + o(1)\di_R \lvert g_t'\rvert ^2\,\dd \mu_t
    \end{split}
\end{align}
yielding
\begin{align}\label{maj2}
    \begin{split}
        z_t^2&= O(\frac{1}{\at^2})\Big(\di_R g_t -g_t(b)\,\dd \mu_t\Big)^2 + O(t^{\frac{1}{2}})\Big(\di_I\lvert g_t'\rvert^2\,\dd \mu_t\Big)^2 + o(1)\Big(\di_R \lvert g_t'\rvert ^2\,\dd \mu_t\Big)^2\\
        &= o(1)\di_R \lvert g_t'\rvert ^2\,\dd \mu_t+o(\st)\di_I \lvert g_t'\rvert ^2\,\dd \mu_t
    \end{split}
\end{align}
so that
\begin{align}\label{logmoyennedroite}\nonumber
    &\at\fii(\md)=r_t\fii(f_t(b)^2+z_t)=\at\Big(\fii(f_t(b)^2)+\fii'(f_t(b)^2)z_t +O(1)z_t^2\Big)\\\nonumber
        &=r_t\fii(f_t(b)^2)+r_t\big(1+O(g_t(b)\big)z_t+o(1)\di_R \lvert g_t'\rvert ^2\,\dd \mu_t+o(\st)\di_I \lvert g_t'\rvert ^2\,\dd \mu_t~\text{ (by \eqref{maj2})}\\ \nonumber
        &=\at\fii(f_t(b)^2)\\ \nonumber
        &+\big(1+O(g_t(b)\big)\Big(2\di_R g_t -g_t(b)\,\dd \mu_t +r_t O(t^{\frac{1}{4}})\di_I\lvert g_t'\rvert^2\,\dd \mu_t + r_t o(1)\di_R \lvert g_t'\rvert ^2\,\dd \mu_t\Big)\\\nonumber
        &+o(1)\di_R \lvert g_t'\rvert ^2\,\dd \mu_t+o(\st)\di_I \lvert g_t'\rvert ^2\,\dd \mu_t ~\text{ (by \eqref{maj1})}\\\nonumber
        &=r_t\fii(f_t(b)^2)+2(1+O(g_t(b)))\Big(\di_R g_t -g_t(b)\,\dd \mu_t\Big)
        +o(1)\di_R \lvert g_t'\rvert ^2\,\dd \mu_t+o(\st)\di_I \lvert g_t'\rvert ^2\,\dd \mu_t\\
        &=r_t\fii(f_t(b)^2)+2\di_R g_t -g_t(b)\,\dd \mu_t +o(1)\di_R \lvert g_t'\rvert ^2\,\dd \mu_t+o(\st)\di_I \lvert g_t'\rvert ^2\,\dd \mu_t
\end{align}
where we used $g_t(b)\di_R g_t -g_t(b)\,\dd \mu_t=o(1)\di_R \lvert g_t'\rvert ^2\,\dd \mu_t+o(\st)\di_I \lvert g_t'\rvert ^2\,\dd \mu_t$ following the argument in \eqref{produit}.

Similarly, we have
\begin{equation}\label{logmoyennegauche}
    \bt\fii(\mg)=\bt\fii(f_t(b)^2)+2\di_L g_t -g_t(a)\,\dd \mu_t +o(1)\di_L \lvert g_t'\rvert ^2\,\dd \mu_t+o(\st)\di_I \lvert g_t'\rvert ^2\,\dd \mu_t
\end{equation}

We now investigate the term $\fii\big(\di f_t^2\,\dd \mu_t\big)$. Indeed,

\begin{equation*}
    \begin{split}
        \fii\big(\di f_t^2 \mu_t\big)&=\fii\big(\di(1+g_t)^2\,\dd \mu_t\big)\\
        &=\fii\big(1+2\di g_t\,\dd \mu_t +\di g_t^2\,\dd \mu_t\big)\\
        &=\fii\bigg(1+2\at g_t(b)+\at g_t(b)^2+2\bt g_t(a) +\bt g_t(a)^2 \\
        &+\di_R 2(g_t-g_t(b))+g_t^2-g_t(b)^2\,\dd \mu_t+\di_L2(g_t-g_t(a))+g_t^2-g_t(a)^2\,\dd \mu_t\bigg)
    \end{split}
\end{equation*}
But by \eqref{epsilon} for $\varepsilon=t^\frac{1}{4}$,
\begin{equation*}
    \begin{split}
        \bigg\lvert \di_R g_t^2-g_t(b)^2\,\dd \mu_t\bigg\rvert &=r_t\lvert \md(g)-g_t(b)^2\rvert\\& \leq O(t^\frac{3}{4})g_t(b)^2+o(1)\di_R \lvert g_t'\rvert^2\,\dd \mu_t\\&=o(\st)\di_I \lvert g_t'\rvert ^2\,\dd \mu_t+o(1)\di_R \lvert g_t'\rvert ^2\,\dd \mu_t
    \end{split}
\end{equation*}
and we similarly obtain
$$\bigg\lvert \di_L g_t^2-g_t(a)^2\,\dd \mu_t\bigg\rvert =o(\st)\di_I \lvert g_t'\rvert ^2\,\dd \mu_t+o(1)\di_L \lvert g_t'\rvert ^2\,\dd \mu_t.$$
Since $\di f_t^2- \fn^2\,\dd \mu_t = \di_R 2(g_t-g_t(b))+g_t^2-g_t(b)^2\,\dd \mu_t +\di_L 2(g_t-g_t(a))+g_t^2-g_t(a)^2\,\dd \mu_t $,
\begin{equation*}
    \begin{split}
        \fii\Big(\di f_t^2 \mu_t\Big)=\fii\bigg(\di \fn^2\,\dd \mu_t+\di_R 2(g_t-g_t(b))\,\dd \mu_t+\di_L 2(g_t-g_t(a))\,\dd \mu_t+E_t\bigg)
\end{split}
\end{equation*}
where 
\begin{equation}\label{2error}
  E_t = \di_L g_t^2-g_t(a)^2\,\dd \mu_t+\di_R g_t^2-g_t(b)^2\,\dd \mu_t=o(\st)\di_I 
\lvert g_t'\rvert ^2\,\dd \mu_t+o(1)\di_L \lvert g_t'\rvert ^2\,\dd \mu_t.  
\end{equation}
Therefore,
\begin{equation}\label{maj3}
    \begin{split}
        \fii\Big(\di f_t^2 \mu_t\Big)&=\fii\Big(\di \fn^2\,\dd \mu_t\Big)+\fii'\Big(\di\fn^2\,\dd \mu_t\Big)\bigg(\di_R 2(g_t-g_t(b))\,\dd \mu_t+\di_L 2(g_t-g_t(a))\,\dd \mu_t +E_t\bigg)\\
        &+O(1)\bigg(\di_R 2(g_t-g_t(b))\,\dd \mu_t+\di_L 2(g_t-g_t(a))\,\dd \mu_t +E_t\bigg)^2.\\
\end{split}
\end{equation}
But
\begin{align*}
\begin{split}
\di \fn^2\,\dd \mu_t=1+\di_I g_t^2\,\dd \mu_t+\at(2g_t(b)+g_t(b)^2)+\bt(2g_t(a)+g_t(a)^2)=1+O(1)\sqrt{\di_I \lvert g_t' \rvert^2\,\dd \mu_t}.
\end{split}
\end{align*}
Hence \begin{equation}\label{derive}
    \fii'\bigg(\di \fn^2\,\dd \mu_t\bigg)=1+O(1)\sqrt{\di_I \lvert g_t' \rvert^2\,\dd \mu_t}.
\end{equation}
Furthermore,
\begin{equation}\label{produitdroite}
    \begin{split}
        \sqrt{\di_I \lvert g_t' \rvert^2\,\dd \mu_t}\di_R g_t-g_t(b)\,\dd \mu_t &\leq t^\frac{3}{4}\di_I \lvert g_t' \rvert^2\,\dd \mu_t+t^{-\frac{3}{4}}O(t)\di_R \lvert g_t'\rvert ^2\,\dd \mu_t\\&=o(\st)\di_I \lvert g_t' \rvert^2\,\dd \mu_t+o(1)\di_R \lvert g_t' \rvert^2\,\dd \mu_t.
    \end{split}
\end{equation}
where we used \eqref{order1}. In the same way, we obtain
\begin{equation}\label{produitgauche}
    \sqrt{\di_I \lvert g_t' \rvert^2\,\dd \mu_t}\di_L g_t-g_t(a)\,\dd \mu_t=o(\st)\di_I \lvert g_t' \rvert^2\,\dd \mu_t+o(1)\di_L \lvert g_t' \rvert^2\,\dd \mu_t.
\end{equation}
Finally, combining \eqref{2error}, \eqref{maj3}, \eqref{derive}, \eqref{produitdroite} and \eqref{produitgauche} yields
\begin{equation*}
\begin{split}
    \fii\Big(\di f_t^2 \mu_t\Big)&=\fii\Big(\di \fn^2\,\dd \mu_t\Big)+2\di_R g_t-g_t(b)\,\dd \mu_t+2\di_L g_t-g_t(a)\,\dd \mu_t\\
    &+o(\st)\di_I \lvert g_t' \rvert^2\,\dd \mu_t+o(1)\di_{I^c} \lvert g_t' \rvert^2\,\dd \mu_t.
    \end{split}
\end{equation*}
Combining this with \eqref{logmoyennedroite} and \eqref{logmoyennegauche}, and recalling that $g_t'=f_t'$ yields
\begin{equation*}
    \begin{split}
        \tiny\dfrac{\di \fii(f_t^2)\,\dd \mu_t-\fii\Big(\di f_t^2\,\dd \mu_t\Big)}{\di \lvert f_t' \rvert^2\,\dd \mu_t}&=\tiny\dfrac{\di \fii(\fn^2)\,\dd \mu_t-\fii\Big(\di \fn ^2\,\dd \mu_t\Big)+o(\st)\di_I \lvert g_t' \rvert^2\,\dd \mu_t+o(1)\di_{I^c} \lvert g_t' \rvert^2\,\dd \mu_t}{\di \lvert f_t' \rvert^2\,\dd \mu_t}\\
        &=\tiny\dfrac{\di \fii(\fn^2)\,\dd \mu_t-\fii\Big(\di \fn ^2\,\dd \mu_t\Big)+o(1)\di_{I^c} \lvert g_t' \rvert^2\,\dd \mu_t}{\di_I \lvert f_t' \rvert^2\,\dd \mu_t+\di_{I^c} \lvert f_t' \rvert^2\,\dd \mu_t}+o(\st)\\
        &\leq \tiny\dfrac{\di \fii(\fn^2)\,\dd \mu_t-\fii\Big(\di \fn ^2\,\dd \mu_t\Big)}{\di_I \lvert f_t' \rvert^2\,\dd \mu_t}+o(\st).
    \end{split}
\end{equation*}
Thus,
$$2\ct=\dfrac{\di \fii(\fn^2)\,\dd \mu_t-\fii\Big(\di \fn^2\,\dd \mu_t\Big)}{\di \lvert \fn' \rvert^2\,\dd \mu_t}+o(\st).$$
Consequently, we may assume without loss of generality that $f_t$ is constant on $R$ and on $L$, and, by Step $2$, we therefore have $f_t\xrightarrow[]{L^\infty(\mathbb{R})}1$.

\vspace{15pt}
\noindent\textbf{Step 4.} \textit{Linearizing, rescaling and upper bound.}

In this fourth step, we rescale $f_t$ once more by a factor of $1+o(1)$, so that $\di_If_t^2\,\dd \mu_t=\mu_t(I)$, i.e. $\dfrac{1}{b-a}\di_If_t(x)^2 \,\dd x=1$.
Note that we can still assume that $f$ is constant on $R$ and on $L$, that $\di \lvert f_t' \rvert^2\,\dd \mu_t \to 0$ and that $f_t\xrightarrow[]{L^\infty(\mathbb{R})}1$. The function $g_t$ is still defined as $f_t-1$, so that $$2\di_I g_t(x)\,\dd x+\di_Ig_t(x)^2\,\dd x=0.$$ 

\begin{align}\label{eqn1} \nonumber
    2\ct+o(\st)&=\dfrac{\di \fii(f_t^2)\,\dd \mu_t-\fii\Big(\di f_t^2\,\dd \mu_t\Big)}{\di \lvert f_t' \rvert^2\,\dd \mu_t}\\ \nonumber
    &=\dfrac{\di_I \fii(f_t^2)\,\dd \mu_t+\di_{I^c}\fii(f_t^2)\,\dd \mu_t-\fii\Big(\di f_t^2\,\dd \mu_t\Big)}{\di \lvert f_t' \rvert^2\,\dd \mu_t}\\
    &\leq 2C_{\mathrm{LS}}(\mu_0)+\dfrac{\di_{I^c}\fii(f_t^2)\,\dd \mu_t-\fii\Big(\di f_t^2\,\dd \mu_t\Big)}{\di \lvert f_t' \rvert^2\,\dd \mu_t}
    \end{align}
where we used  
\begin{align*}
    \begin{split}
        \frac{1}{b-a}\di_I \fii(f_t(x)^2)\,\dd x &\leq \fii\Bigg(\overbrace{\dfrac{1}{b-a}\di_I f_t(x)^2\,\dd x}^{=\,1}\Bigg)+\frac{2C_{\mathrm{LS}}(\mu_0)}{b-a}\di_I \lvert f_t'(x) \rvert^2\,\dd x\\
        &=\frac{2C_{\mathrm{LS}}(\mu_0)}{b-a}\di_I \lvert f_t'(x) \rvert^2\,\dd x.
    \end{split}
\end{align*}
A Taylor expansion of $\fii$ yields
\begin{align}\label{eqn2}
\begin{split}
\di_{R}\fii(f_t^2)\,\dd \mu_t=r_t\fii(1+2g_t(b)+g_t(b)^2)=r_t\big(2g_t(b)+3g_t(b)^2\big)+O(\st\lVert  g_t \rVert^3_\infty)\\
\di_{L}\fii(f_t^2)\,\dd \mu_t=\bt\fii(1+2g_t(a)+g_t(a)^2)=\bt\big(2g_t(a)+3g_t(a)^2\big)+O(\st\lVert  g_t \rVert^3_\infty)
\end{split}
\end{align}
Moreover, since
\begin{equation*}
    \begin{split}
        \di f_t^2\,\dd \mu_t&=1+2\di g_t\,\dd \mu_t +\di g_t^2\,\dd \mu_t\\
        &=1+2\di_{I^c}g_t\,\dd \mu_t+\di_{I^c}g_t^2\,\dd \mu_t\\
        &=1+\at(2g_t(b)+g_t(b)^2)+\bt(2g_t(a)+g_t(a)^2),
    \end{split}
\end{equation*}
it follows that
\begin{equation}\label{eqn3}
    \begin{split}
        \fii\Big(\di f_t^2\,\dd \mu_t\Big)=\at(2g_t(b)+g_t(b)^2)+\bt(2g_t(a)+g_t(a)^2)+O(t\lVert g_t \rVert^2_\infty).
    \end{split}
\end{equation}
Combining \eqref{eqn1}, \eqref{eqn2} and \eqref{eqn3} yields
\begin{equation*}
    2\ct\leq 2C_{\mathrm{LS}}(\mu_0)+\dfrac{2\at g_t(b)^2+2\bt g_t(a)^2 +O(\st\lVert  g_t \rVert^3_\infty)+O(t\lVert g_t \rVert^2_\infty)}{\di_I \lvert g_t'\rvert^2\,\dd \mu_t}+o(\st).
\end{equation*}
But $g_t=f_t-1$ vanishes somewhere on $I$ as $\dfrac{1}{b-a}\di_If_t^2\,\dd \mu_t=1$. Hence $\lVert g_t \rVert^2_\infty\leq O(1)\di_I \lvert g_t' \rvert^2$ and we have
$$\dfrac{O(\st\lVert  g_t \rVert^3_\infty)+O(t\lVert g_t \rVert^2_\infty)}{\di_I \lvert g_t' \rvert^2}=o(\st)$$
from which it follows that
\begin{equation}\label{preresult}
\begin{split}
\ct &\leq C_{\mathrm{LS}}(\mu_0)+\dfrac{\at g_t(b)^2+\bt g_t(a)^2}{\di_I \lvert g_t' \rvert^2\,\dd \mu_t}+o(\st)\\
&\leq C_{\mathrm{LS}}(\mu_0)+\dfrac{\beta g_t(b)^2+\alpha g_t(a)^2}{(1+o(1))\di_I \lvert g_t' \rvert^2\,\dd \mu_0}\st+o(\st)\\
&= C_{\mathrm{LS}}(\mu_0)+\dfrac{\beta g_t(b)^2+\alpha g_t(a)^2}{\di_I \lvert g_t' \rvert^2\,\dd \mu_0}\st+o(\st)
\end{split}
\end{equation}
where $\alpha=\lim \dfrac{\bt}{\st}$ and $\beta =\lim \dfrac{\at}{\st}$.

\medskip

\noindent It remains to determine the limit of $\dfrac{\alpha g_t(b)^2+\beta g_t(a)^2}{\di_I \lvert g_t' \rvert^2\,\dd \mu_t}$ to obtain an upper bound on $\ct$.
Set $\lambda_t=\sqrt{\di \lvert g_t'\rvert^2\,\dd \mu_0}$ and $h_t=\dfrac{g_t-\di g_t\,\dd \mu_t}{\sqrt{\di \lvert g_t' \rvert^2\,\dd \mu_0}}$.
Using a linearization argument once more,
\begin{align*}
    &\dfrac{\di \fii(f_t^2)\,\dd \mu_t-\fii\Big(\di f_t^2\,\dd \mu_t\Big)}{\di \lvert g_t' \rvert^2\,\dd \mu_t}=\dfrac{2\di g_t^2\,\dd \mu_t-2\Big( \di g_t\,\dd \mu_t\Big)^2+O(\lVert g_t \lVert^3_\infty)}{\di \lvert g_t' \rvert^2\,\dd \mu_t}\\
    &=\dfrac{2\mathrm{Var_{\mu_t}}(g_t)}{(1+o(1))\di \lvert g_t' \rvert^2\,\dd \mu_0}+o(1)=\dfrac{2\di h_t^2\,\dd \mu_t}{\di \lvert h_t' \rvert^2\,\dd \mu_0}+o(1)=\dfrac{2\mathrm{Var}_{\mu_0}(h_t)}{\di \lVert h_t' \rVert^2\,\dd \mu_0}+o(1)
\end{align*}
where we used $\di \lvert h_t'\rvert^2\,\dd \mu_0=\Theta(1)$ and $\di h_t\,\dd \mu_0=o(1)$ as $\di h_t d \mu_t=0$ and $\lVert h_t \rVert_\infty = O(1)$.
Hence $$C_{\mathrm{P}}(\mu_0) \leq C_{\mathrm{LS}}(\mu_0) \leq \liminf \ct \leq \liminf \dfrac{\mathrm{Var}_{\mu_0}(h_t)}{\di \lVert h_t' \rVert^2\,\dd \mu_0}.$$
From the stability of the Poincaré inequality (the argument is given in the proof of Theorem \ref{1d}), it follows that for any sequence $t_n \to 0$, $(h_{t_n}-\di h_{t_n}\,\dd \mu_0)_n$ is relatively compact in $H^1(I)$ and its set of accumation points is contained in $\{u,-u\}$, where $$u(x)=\dfrac{(b-a)\sqrt{2}}{\pi} \sin\Big(\frac{\pi}{b-a}\big(x-\frac{a+b}{2}\big)\Big) ~ \text{ for $x\in I$.}$$

\medskip

Take a sequence $\tn \to 0$ such that $h_\tn \xrightarrow[n\to \infty]{H^1(I)}u$. For $t \in T=\{\tn, n\in \mathbb{N}\}$, we have 
\begin{align*}
    g_t&=\di g_t\,\dd \mu_t+ \lambda_t h_t =\di g_t\,\dd \mu_t + \lambda_t\Big( \di h_t\,\dd \mu_0 + u+\varepsilon_t\Big)\\
    &=c_t +\lambda_t u + \lambda_t\varepsilon_t
\end{align*}
where $\varepsilon_t \xrightarrow[t\to 0,~
t\in T]{H^1(I)}0$ and $c_t=\di g_t\,\dd \mu_t+\lambda_t\di h_t\,\dd \mu_0$.
Since $\di 2g_t+g_t^2\,\dd \mu_0$ and $\di u\,\dd \mu_0=0$, it follows that 
\begin{align*}
    0&=2c_t+2\lambda_t\di \varepsilon_t\,\dd \mu_0+c_t^2+\lambda_t^2\di u^2\,\dd \mu_0+\lambda_t^2\di \varepsilon_t^2\,\dd \mu_0\\ &+2\lambda_t^2\di u\varepsilon_t\,\dd \mu_0+2\lambda_tc_t\di u\,\dd \mu_0+2\lambda_tc_t\di \varepsilon_t\,\dd \mu_0.
\end{align*}
Therefore, $c_t=o(\lambda_t)$. Consequently, as $u(b) \neq 0$,
\begin{align*}
    \dfrac{g_t(b)^2}{\di \lvert g_t' \rvert^2\,\dd \mu_0} \sim \dfrac{\lambda_t^2u(b)^2}{\di \lambda_t^2\lvert u'\rvert^2\,\dd \mu_0+\di\lambda_t^2 u'\varepsilon_t'\,\dd \mu_0}\xrightarrow[t\in T]{t \to 0}\dfrac{u(b)^2}{\di \lvert u' \rvert^2\,\dd \mu_0}.
\end{align*}
Similarly,
$$\dfrac{g_t(a)^2}{\di \lvert g_t' \rvert^2\,\dd \mu_0}\xrightarrow[t \in T]{t \to 0}\dfrac{u(a)^2}{\di \lvert u' \rvert^2\,\dd \mu_0}.$$
Recalling \eqref{preresult}, we deduce that 
\begin{align*}
    C_{\mathrm{LS}}(\mu_\tn) &\leq C_{\mathrm{LS}}(\mu_0)+\dfrac{\beta g_\tn(b)^2+\alpha g_\tn(a)^2}{\di \lvert g_\tn' \rvert^2\,\dd \mu_0}\sqrt{\tn}+o(\sqrt{\tn})\\
    &\leq C_{\mathrm{LS}}(\mu_0)+\dfrac{\beta u(b)^2+\alpha u(a)^2}{\di \lvert u' \rvert^2\,\dd \mu_0}\sqrt{\tn}+o(\sqrt{\tn}).
\end{align*}
By the same argument, the same bound holds whenever $(t_n)_{n \in \mathbb{N}}$ is sequence such that $t_n \to 0$ and $h_\tn-\di h_\tn\,\dd \mu_0 \xrightarrow[]{H^1(I)} -u$. As a result, the upper bound holds along all sequences $\tn \to 0$ and we have therefore shown that
$$\ct \leq C_{\mathrm{LS}}(\mu_0)+\dfrac{\beta u(b)^2+\alpha u(a)^2}{\di \lvert u' \rvert^2\,\dd \mu_0}\st+o(\st).$$

\noindent\textbf{Step 5.} \textit{Lower bound.}

In this final step, we show the converse inequality. The proof is the same as for the Poincaré inequality.
Extending $u$ to $\mathbb{R}$ by making it continuous and constant on $R$ and $L$ yields
\begin{align*}
    &\ct \geq C_{\mathrm{P}}(\mu_t) \geq \dfrac{\mathrm{Var}_{\mu_t}(u)}{\di \lvert u' \rvert ^2\,\dd \mu_t}=\dfrac{\di u^2\,\dd \mu_t -\big(\at u(b)+\bt u(a) \big)^2}{\di \lvert u' \rvert ^2\,\dd \mu_t}\\
    &= \dfrac{\di_I u^2\,\dd \mu_t+\at u(b)^2+ \bt u(a)^2}{\di_I \lvert u' \rvert ^2\,\dd \mu_t}+O(t)=\overbrace{C_{\mathrm{P}}(\mu_0)}^{= C_{\mathrm{LS}}(\mu_0)}+\dfrac{\at u(b)^2+ \bt u(a)^2}{\di_I \lvert u' \rvert ^2\,\dd \mu_t}+O(t)\\
    &=C_{\mathrm{LS}}(\mu_0)+\dfrac{\beta u(b)^2+\alpha u(a)^2}{\di \lvert u' \rvert^2\,\dd \mu_0}\st+o(\st).
\end{align*}
Thus,
$$\ct = C_{\mathrm{LS}}(\mu_0)+\sqrt{2}(b-a)\pi^{-\frac{3}{2}}\big(V''(a^-)^{-\frac{1}{2}}+{V''(b^+)^{-\frac{1}{2}}}\big)\st +o(\st)
$$
and the proof is complete.
\end{proofof}

\bibliographystyle{alpha}
\bibliography{biblio.bib}

\end{document}